\crefname{lemma}{Lemma}{Lemmas}
\crefname{claim}{Claim}{Claims}
\crefname{corollary}{Corollary}{Corollaries}
\crefname{theorem}{Theorem}{Theorems}
\crefname{fact}{Fact}{Facts}
\crefname{conjecture}{Conjecture}{Conjectures}
\crefname{proposition}{Proposition}{Proposition}
\newtheorem{conjecture}{Conjecture}
\newtheorem{theorem}[conjecture]{Theorem}
\newtheorem{corollary}[conjecture]{Corollary}
\newtheorem{proposition}[conjecture]{Proposition}
\newtheorem{lemma}[conjecture]{Lemma}
\newcommand{\R}{\mathbb{R}}
\newcommand{\Ps}{\mathcal{P}}
\newcommand{\one}{\mathbf{1}}
\newcommand{\Ls}{\mathcal{L}}
\newcommand{\Z}{\mathbb{Z}}
\newcommand{\ceil}[1]{\left\lceil #1 \right\rceil}
\newcommand{\eps}{\varepsilon}
\newcommand{\floor}[1]{\left\lfloor #1 \right\rfloor}
\newcommand{\qaq}{\quad\text{and}\quad}
\newcommand{\seq}[1]{\left\{ #1 \right\}}
\newcommand{\set}[2]{\left\{ #1 : #2 \right\}}
\newcommand{\size}[1]{\left| #1 \right|}
\renewcommand{\tilde}{\widetilde}
\newcommand{\imod}[1]{\,(\text{mod }#1)}
\title{Sets of Rich Lines in General Position}
\author{G.\ Amirkhanyan, A.\ Bush, E.\ Croot, \and C.\ Pryby}
\begin{document}

\begin{abstract}
The Szemer\'edi-Trotter theorem implies that the number of lines incident to at least $k > 1$ of $n$ points in $\mathbb{R}^2$ is $O(n^2/k^3 + n/k)$. J.\ Solymosi conjectured that if one requires the points to be in a grid formation and the lines to be in general position---no two parallel, no three meeting at a point---then one can get a much tighter bound. We prove a slight variant of his conjecture: for every $\varepsilon > 0$ there exists some $\delta > 0$ such that for sufficiently large values of $n$, every set of lines in general position, each intersecting an $n \times n$ grid of points in at least $n^{1-\delta}$ places, has size at most $n^\varepsilon$. This implies a conjecture of Gy.\ Elekes about the existence of a uniform statistical version of Freiman's theorem for linear functions with small image sets.
\end{abstract}

\maketitle 

%%%%%%%%%%%%%%%%%%%%%%%%%%%%%%%%%%%%%%%%%%%%%%%%%%%%%%%%%%%%%%%%%%%%%%%%%%
%%% INTRODUCTION %%%%%%%%%%%%%%%%%%%%%%%%%%%%%%%%%%%%%%%%%%%%%%%%%%%%%%%%%
%%%%%%%%%%%%%%%%%%%%%%%%%%%%%%%%%%%%%%%%%%%%%%%%%%%%%%%%%%%%%%%%%%%%%%%%%%

\section{Introduction}

The Szemer\'edi-Trotter theorem \cite{sztrotter} states that, given a set of $n$ points and a set of $m$ lines in $\R^2$, the number of incidences (that is, pairs $(p,\ell)$ where $p \in \ell$) between these points and lines is $O(n^{2/3}m^{2/3} + m + n)$. This bound is sharp up to a constant coefficient. An equivalent form of the theorem states that, given $n$ points and an integer $k > 1$, there are $O(n^2/k^3 + n/k)$ \emph{$k$-rich} lines (that is, lines that each contain at least $k$ of the $n$ points in the set).

Our goal in this paper is to prove a variant of a conjecture of J.\ Solymosi \cite{elekes}:
\begin{theorem}
	\label{thm:soly}
	For every $\eps > 0$, there exists $0 < \delta < \eps$ such that for sufficiently large $n=n(\eps,\delta)$, the following holds:
	
	If $A \subseteq \R$ has size $n$, then every set of at least $n^\eps$ lines in $\R^2$, each of which intersects $A \times A$ in at least $n^{1-\delta}$ points, contains either two parallel lines or three lines with a common intersection point.
\end{theorem}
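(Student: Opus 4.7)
The plan is to argue by contradiction: assume that for some fixed $\eps>0$ one can find, for arbitrarily small $\delta$ and arbitrarily large $n$, a set $A\subseteq\R$ of size $n$ and a family $\mathcal{L}$ of at least $m:=n^\eps$ lines in general position, each meeting $A\times A$ in $\geq n^{1-\delta}$ points. For each $\ell\in\mathcal{L}$ I parameterize $\ell\colon y=a_\ell x+b_\ell$ (dealing with axis-parallel lines separately) and set $X_\ell:=\{x\in A:a_\ell x+b_\ell\in A\}$, so $|X_\ell|\geq n^{1-\delta}$. In this language, ``no two parallel'' says the slopes $\{a_\ell\}$ are distinct, while ``no three concurrent'' says that for every $x_0\in\R$, the map $\ell\mapsto a_\ell x_0+b_\ell$ restricted to lines with $x_0\in X_\ell$ is at most $2$-to-$1$.

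A naive incidence count gives only the weak bound $m\leq 2n^{1+\delta}$: summing incidences over columns $x\in A$ and applying the $2$-to-$1$ property yields $m\cdot n^{1-\delta}\leq 2n^2$. The real challenge is to exploit the distinct-slopes condition to shave a factor of nearly $n$ from this estimate. My strategy is iterative. By Cauchy--Schwarz on the column counts $|S_x|=|\{\ell:x\in X_\ell\}|$, a typical pair of lines satisfies $|X_\ell\cap X_{\ell'}|\gtrsim n^{1-2\delta}$, and for such a pair the difference $(a_\ell-a_{\ell'})x+(b_\ell-b_{\ell'})$ has nonzero slope (here ``no two parallel'' enters) and sends a set of this size from $A$ into $A-A$. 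Applying a dependent-random-choice argument to the dense bipartite incidence graph between $\mathcal{L}$ and $A$, I would pass to a subfamily $\mathcal{L}_1\subseteq\mathcal{L}$ of size at least $m^{1-O(\delta)}$ in which \emph{every} pair has common rich set of size $n^{1-O(\delta)}$; this gives us a wealth of new rich affine maps, now valued in the slightly larger ambient set $A-A$.

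I then plan to iterate: at each step, dependent random choice plus the new rich difference maps push a still-large subfamily further into small-doubling territory, at the cost of replacing $A$ by $kA-kA$ for slowly growing $k$ and the richness exponent by $1-O(k\delta)$. After $\asymp\eps/\delta$ iterations, a Pl\"unnecke--Ruzsa/Balog--Szemer\'edi--Gowers argument should trap the triples $(a_\ell,b_\ell)$ inside a generalized arithmetic progression of bounded dimension, with $\ell$ ranging over a subfamily still of polynomially large size. At this point, the rich-set condition becomes extremely restrictive: composing the $f_\ell$ with themselves and their inverses produces an over-determined system of affine maps that must eventually pin down two with the same slope (parallel) or three with a common fixed point (concurrent), contradicting general position.

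The hard part, I expect, will be the bookkeeping in the iteration: each step costs an additive $O(\delta)$ in the richness exponent and potentially a polynomial factor in the surviving subfamily size, so $\delta$ must be chosen very small in terms of $\eps$ at the outset to afford enough iterations while preserving a large enough $\mathcal{L}_k$ to make the final structural step meaningful. The other subtle piece is the final reduction---converting the Freiman structure on the line parameters into an actual parallel pair or concurrent triple---which I anticipate requires a careful incidence or sum-product argument tailored to the generalized AP that emerges. This last step is essentially the ``statistical Freiman'' content of Elekes's conjecture mentioned in the abstract, and is where I expect most of the technical weight to lie.
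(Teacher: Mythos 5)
Your opening Cauchy--Schwarz step is essentially the paper's \cref{lemma:easy1}: many pairs $(\ell,\ell')$ satisfy $|X_\ell\cap X_{\ell'}|\gtrsim n^{1-2\delta}$. But the central device you then propose --- taking the \emph{difference} line $(a_\ell-a_{\ell'})x+(b_\ell-b_{\ell'})$ --- has a structural flaw that the paper is specifically designed to avoid. The difference line is rich only with respect to $A\times(A-A)$, not $A\times A$, and $A$ is an arbitrary set, so $|A-A|$ can be on the order of $n^2$. Under the iteration you sketch, the ambient set inflates to $kA-kA$, which can have size $n^{\Theta(k)}$; at that point ``richness'' in this blown-up grid carries no content and Szemer\'edi--Trotter no longer constrains anything. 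The paper's key observation is that \emph{composition} rather than difference closes up: the line $\ell*\ell':=\ell^{-1}\circ\ell'$ is again rich in the \emph{same} grid $A\times A$ (\cref{lemma:easy1.5}), so the ambient set never grows. All the subsequent machinery --- the iterated $*$ operation, \cref{thm:4cases}, the commutator graph --- rests on exactly this closure property, which your difference-based construction does not have and cannot recover without an a priori small-doubling hypothesis on $A$.

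The second gap is in the endgame. You claim that BSG/Pl\"unnecke--Ruzsa will trap the pairs $(a_\ell,b_\ell)$ inside a generalized arithmetic progression, after which an ``over-determined system'' of affine maps forces a parallel or concurrent configuration. The paper does not extract a GAP at any point in the proof of \cref{thm:soly}. What it actually extracts is approximate \emph{multiplicative} closure of the slope sets (via multiplicative energy and BSG inside the proof of \cref{thm:4cases}(iv)), and the contradiction is obtained through Croot--Hart and Borenstein--Croot sum-product machinery, not through a Freiman covering of the line parameters. Your final sentence --- that the Freiman structure ``must eventually pin down two with the same slope or three with a common fixed point'' --- is essentially a restatement of the conjecture rather than a proof step. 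You have also skipped the paper's two-stage architecture: it first proves the weaker \cref{thm:weaksoly} for $n^{1-\eps}$ lines in near-general position, using the commutator graph together with \cref{thm:4cases}(i), and then bootstraps to \cref{thm:soly} via \cref{cor:almostgp} and \cref{lem:preimages}; no analogue of this reduction appears in your plan, and it is precisely this reduction that converts the bounded-star-family statement into the clean three-concurrent-lines statement you want.
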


The Szemer\'{e}di-Trotter theorem gives a bound of $O(n^{1+3\delta})$ $n^{1-\delta}$-rich lines for an arbitrary set of $n^2$ points. As a consequence of Solymosi's conjecture, requiring a grid structure in the set of points and general position in the set of lines gives a significant improvement to the Szemer\'{e}di-Trotter bound.

We will first prove a weaker form of the conjecture. This form turns out to be sufficient to prove the form above.

\begin{theorem}
	\label{thm:weaksoly}
	For every $\eps > 0$, there exists $0 < \delta < \eps$ such that for sufficiently large $n=n(\eps,\delta)$, the following holds:
	
	If $A \subseteq \R$, $\size{A} = n$, then every set of at least $n^{1-\eps}$ lines in $\R^2$, each of which intersects $A \times A$ in at least $n^{1-\delta}$ points, contains either two parallel lines or $C = C(\eps) > 0$ lines with a common intersection point.
\end{theorem}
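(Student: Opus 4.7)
The strategy is to argue by contradiction and iteratively cascade the rich-line count by composing affine maps, until it exceeds the Szemer\'edi--Trotter bound. Suppose there are $L = \lceil n^{1-\eps}\rceil$ lines $\ell_i : y = \alpha_i x + \beta_i$ in a general-position family (no two parallel, no $C=C(\eps)$ concurrent), each meeting $A\times A$ in at least $n^{1-\delta}$ points. Let $A_i \subseteq A$ denote the set of $x$-coordinates of these incidences, so that $|A_i| \ge n^{1-\delta}$ and $\alpha_i A_i + \beta_i \subseteq A$, and let $\phi_i$ denote the affine map $x \mapsto \alpha_i x + \beta_i$.

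After an initial dyadic pigeonhole on slope sign, slope magnitude, intercept magnitude, and $|A_i|$ (costing only a $(\log n)^{O(1)}$ factor), I may assume these parameters are each concentrated to within a factor of two on the surviving subfamily. Cauchy--Schwarz then produces many heavily overlapping pairs:
\[
\sum_{i,j} |A_i \cap A_j| \;=\; \sum_{x\in A} \big|\{i : x \in A_i\}\big|^2 \;\ge\; \frac{(L n^{1-\delta})^2}{n} \;=\; L^2 n^{1-2\delta},
\]
so after another pigeonhole a positive fraction of pairs $(i,j)$ satisfy $|A_i\cap A_j|\ge n^{1-O(\delta)}$. For each such pair, the composition $\phi_j\circ\phi_i\inv$ is an affine map whose graph is a line containing the $|A_i\cap A_j|$ grid points $\{(\phi_i(x),\phi_j(x)) : x\in A_i\cap A_j\}\subseteq A\times A$. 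If two distinct pairs $(i,j)\ne(i',j')$ produce the same composition line, then $\phi_{i'}\circ\phi_i\inv = \phi_{j'}\circ\phi_j\inv$, a relation whose frequency the general-position hypothesis controls in terms of the concurrence constant $C$; after discarding collisions one obtains $\gtrsim L^2/C^{O(1)}$ distinct new lines, each $n^{1-O(\delta)}$-rich in the grid.

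Now iterate: the squared family replaces $(L,\delta)$ by $(L^2/C^{O(1)},\,O(\delta))$, and after $k$ rounds yields $\gtrsim L^{2^k}/C^{O(2^k)}$ distinct lines, each $n^{1-c^k\delta}$-rich for some absolute $c>1$. Choosing $k = O(\log(1/\eps))$ pushes the line count past $n^{3/2}$, while choosing $\delta$ small enough in terms of $\eps$ (and hence of $k$) keeps $c^k\delta$ arbitrarily small; the final family then exceeds the Szemer\'edi--Trotter bound $O(n^{1 + O(c^k\delta)})$ for the number of $n^{1-c^k\delta}$-rich lines in an $n\times n$ grid, the desired contradiction.

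The main obstacle is verifying that the composed family retains enough of a general-position structure for the composition step to iterate: one must bound the multiplicity of composition-line collisions by a constant depending only on $C$, and rule out many new lines that are mutually parallel or concurrent (pruning away those that are, without losing too many). This is where the concurrence parameter $C = C(\eps)$ must be taken sufficiently large (growing at least exponentially in $1/\eps$) to absorb the inductive losses; once that invariant is set up, the arithmetic of $\delta$, $\eps$, and the richness through the iteration is a routine optimization.
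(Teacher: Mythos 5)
Your approach---iteratively compose the affine maps and argue that the line count roughly squares each round, until it blows past the Szemer\'edi--Trotter bound---is genuinely different from the paper's, and unfortunately it has a fatal gap at the step you yourself flag as ``routine.'' You claim that after discarding collisions you obtain $\gtrsim L^2/C^{O(1)}$ distinct lines of the form $\phi_j\circ\phi_i^{-1}$. This is not justified by general position and is false in general. General position prevents two lines from being parallel and bounds concurrence, but it does not prevent the map $(i,j)\mapsto \phi_j\circ\phi_i^{-1}$ from being $\Theta(L)$-to-one. For example, if the slopes $\alpha_i$ are (approximately) a geometric progression $\{2^1,\dots,2^L\}$ with generic intercepts, the family is in general position, yet the ratio set $\{\alpha_j/\alpha_i\}$ has size only $O(L)$, so the slopes appearing among compositions collapse dramatically; the number of distinct composition lines can be $\Theta(L)$, not $\Theta(L^2)$. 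The paper's Corollary~\ref{cor:easy2} proves exactly this worst-case linear bound ($|L*L| \gtrsim |L|n^{-2\delta}$), and the entire content of Theorem~\ref{thm:4cases}---especially case (iv), whose proof occupies Section 5---is devoted to understanding what structure forces $L*L$ to have few lines. Without that analysis, your iteration has no reason to grow at all, and the contradiction with Szemer\'edi--Trotter never materializes.

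The paper's actual proof of Theorem~\ref{thm:weaksoly} takes a one-shot route that sidesteps the growth question entirely: it builds the \emph{commutator graph} on $L*L \cup L^{-1}*L^{-1}$, with an edge joining $f*g$ and $g^{-1}*f^{-1}$ for each good pair $(f,g)$. The crucial observation is that these two composed lines always share a slope, so every edge of the graph joins two parallel lines. Near-general position on $L$ bounds the multiplicity of each edge by $2C+2$, so the graph has $\gtrsim n^{2-O(\delta)}$ edges; Szemer\'edi--Trotter bounds the vertex set by $n^{1+O(\delta)}$; hence some vertex has degree $\gtrsim n^{1-O(\delta)}$, i.e.\ there is a family of $\gtrsim n^{1-O(\delta)}$ parallel lines inside $L*L$. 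That directly contradicts Theorem~\ref{thm:4cases}(i), which caps parallel families in $L*L$ at $2|L*L|n^{2\delta}/|L|$---a bound that is tiny by comparison precisely because $|L*L|$ is at most $n^{1+O(\delta)}$ while $|L| \geq n^{1-\eps}$. Note the contrast: the paper \emph{wants} $|L*L|$ to be small (it uses $|L*L| \leq n^{1+O(\delta)}$ as input), whereas your argument needs it to be enormous. The iterative-squaring flavor you describe does appear in the paper, but in Section 8 (deducing Theorem~\ref{thm:soly} from Theorem~\ref{thm:weaksoly}), where the dichotomy ``either $L^{*(k+1)}$ grows by a factor $n^{5\alpha}$, or it stabilizes'' is handled explicitly in both branches---the stable branch being exactly the case your proposal cannot handle.
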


We begin by listing the major tools and basic results needed to prove \cref{thm:soly}. In Section 4 we introduce a key result, \cref{thm:4cases}, describing the behavior of sets of rich lines under self-composition, the main part of which we prove in Section 5. In Section 6, we describe a consequence of \cref{thm:4cases} which allows us to extract a set of lines in ``nearly'' general position from the composition of a set of rich lines with itself. In Section 7, we prove \cref{thm:weaksoly}, and in Section 8, we use the result of Section 6 to showing that \cref{thm:soly} follows from \cref{thm:weaksoly}. We conclude in Section 9 by describing an application of \cref{thm:soly}: it implies a uniform statistical version of Freiman's theorem, generalized to sets of (affine) linear functions.

%%%%%%%%%%%%%%%%%%%%%%%%%%%%%%%%%%%%%%%%%%%%%%%%%%%%%%%%%%%%%%%%%%%%%%%%%
%%% MAJOR TOOLS %%%%%%%%%%%%%%%%%%%%%%%%%%%%%%%%%%%%%%%%%%%%%%%%%%%%%%%%%
%%%%%%%%%%%%%%%%%%%%%%%%%%%%%%%%%%%%%%%%%%%%%%%%%%%%%%%%%%%%%%%%%%%%%%%%%

\section{Major Tools}

In this section we list the theorems we use in this paper. First, we restate the Szemer\'edi-Trotter theorem (in the form we shall use it):

\begin{theorem}[Szemer\'edi-Trotter \cite{tao-vu}]
	\label{thm:ST}
	If $P$ is any finite set of points and $k \geq 2$, then the number of $k$-rich lines in $P$ is 
	\[ O\left( \max\left( \frac{\size{P}^2}{k^3}, \frac{\size{P}}{k} \right) \right). \]
\end{theorem}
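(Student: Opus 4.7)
The plan is to prove the classical incidence form of Szemerédi--Trotter, $I(P,L) = O(\size{P}^{2/3}\size{L}^{2/3} + \size{P} + \size{L})$, via Székely's crossing-number argument, and then convert it to the stated $k$-rich bound by elementary double-counting. This route reduces the entire theorem to one external ingredient, the crossing-number inequality $\operatorname{cr}(G) \geq e^3/(64 v^2)$ for any graph $G$ with $v$ vertices and $e \geq 4v$ edges, which itself follows from Euler's formula (giving the linear bound $\operatorname{cr}(H) \geq e(H) - 3 v(H)$) combined with a random-sampling trick: keep each vertex of $G$ independently with probability $p = 4v/e$, apply the linear bound in expectation to the induced subgraph, and solve.

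Given the incidence form, I would construct the \emph{incidence graph} on vertex set $P$, where on each line $\ell \in L$ meeting $i_\ell$ points of $P$ we include the $i_\ell - 1$ edges joining consecutive points of $P \cap \ell$. This graph has $v \leq \size{P}$ vertices and $e = I(P,L) - \size{L}$ edges, and drawing the edges along the original lines exhibits at most $\binom{\size{L}}{2}$ crossings since two distinct lines cross at most once. If $e \geq 4\size{P}$, applying the crossing-number inequality yields $e^3 \leq 64\,\size{P}^2\binom{\size{L}}{2}$, so $I(P,L) = O(\size{P}^{2/3}\size{L}^{2/3}) + \size{L}$; otherwise $I(P,L) < 4\size{P} + \size{L}$ directly.

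To pass to the $k$-rich form, let $L_k$ denote the set of $k$-rich lines and bound the incidences from below by $k\,\size{L_k}$, so that
\[ k\,\size{L_k} \leq C\bigl(\size{P}^{2/3}\size{L_k}^{2/3} + \size{P} + \size{L_k}\bigr). \]
A three-case split on which term on the right dominates gives $\size{L_k} = O(\size{P}^2/k^3)$ in the first case and $O(\size{P}/k)$ in the second; the third case forces $k \leq 3C$, and there the bound is absorbed into the constant since $k \geq 2$. Taking the larger of the two surviving bounds reproduces the stated $\max$ expression. The main obstacle in the whole chain is the crossing-number inequality; once that is in hand, both the Székely reduction and the conversion to the $k$-rich form are routine.
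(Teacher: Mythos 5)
The paper does not prove this statement; it quotes the Szemer\'edi--Trotter theorem as a black box from the cited reference \cite{tao-vu}, so there is no internal proof to compare against. Your argument is correct and is the standard Sz\'ekely crossing-number proof (which is essentially the treatment in the cited source): the incidence graph is simple because two distinct lines share at most one point, the case $e < 4\size{P}$ falls into the linear term, and your ``third case'' is legitimately absorbed because when $k \leq 3C$ the trivial bound $\size{L_k} \leq \binom{\size{P}}{2}$ already gives $\size{L_k} = O(\size{P}^2/k^3)$ with an absolute constant.
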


Before listing the remaining tools we'll be using, let us define some notation. Given two subsets $A,B$ of an additive abelian group, define the \emph{sumset} by 
	\[ A+B := \set{a+b}{a \in A, b \in B}. \]
We further define the \emph{iterated sumset} by $1A := A$ and, for $k > 1$, 
	\[ kA := (k-1)A + A \]

\begin{theorem}[Pl\"unnecke-Ruzsa \cite{tao-vu}]
	\label{thm:pluruz}
	If $A$ is a subset of an additive group such that $\size{2A} \leq K\size{A}$, then $\size{nA} \leq K^n\size{A}$ for all $n \geq 1$.
\end{theorem}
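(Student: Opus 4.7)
The plan is to prove this via the approach of Petridis, which replaces the classical graph-theoretic machinery with a single incisive lemma. Among all non-empty subsets of $A$, choose $X$ minimizing the ratio $|X + A|/|X|$, and set $K' = |X + A|/|X|$; since $A$ itself is a candidate, $K' \leq K$. The key lemma is the following: for every finite set $C$,
\[
|X + A + C| \leq K'\,|X + C|.
\]

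I would prove the lemma by induction on $|C|$. The base case $|C|=1$ is immediate: both sides are translates (of $X + A$ and of $X$ respectively), so the ratio is exactly $K'$. For the inductive step, fix $c \in C$, let $C' = C \setminus \{c\}$, and define $X_c = \{x \in X : x + c \in X + C'\}$, so that $|X + C| = |X + C'| + |X \setminus X_c|$. The crucial observation is that for each $x \in X_c$, writing $x + c = x' + c'$ with $x' \in X$, $c' \in C'$ gives $x + A + c = x' + A + c' \subseteq X + A + C'$; consequently every element of $X + A + c$ that is new (not already in $X + A + C'$) must lie outside $X_c + A + c$. Hence
\[
|X + A + C| - |X + A + C'| \leq |X + A| - |X_c + A|.
\]
Applying the minimality of $K'$ to the subset $X_c \subseteq X \subseteq A$ gives $|X_c + A| \geq K'|X_c|$, so the right-hand side is at most $K'|X| - K'|X_c| = K'|X \setminus X_c|$. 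Combining with the inductive hypothesis $|X + A + C'| \leq K'|X + C'|$ closes the induction.

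Given the lemma, the theorem follows by iteration: applying it with $C$ successively equal to $\emptyset, A, 2A, \ldots, (n-1)A$ yields $|X + nA| \leq (K')^n |X| \leq K^n |X| \leq K^n |A|$. Finally, since $X$ is non-empty, any $x \in X$ satisfies $x + nA \subseteq X + nA$, so $|nA| = |x + nA| \leq |X + nA| \leq K^n |A|$.

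The main obstacle is the inductive step of the lemma. The naive bound on the new elements $(X + A + c) \setminus (X + A + C')$ by $|(X \setminus X_c) + A|$ runs the wrong way against minimality, since $K'$ is a \emph{minimum} ratio. The correct move is to bound the new elements via the \emph{complement} inside $X + A + c$---namely $|X + A| - |X_c + A|$---which converts the minimality bound $|X_c + A| \geq K'|X_c|$ into the desired upper bound on the cardinality difference. Once this is spotted, the remainder is essentially bookkeeping.
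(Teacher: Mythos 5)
Your proof is correct; it is exactly Petridis's 2012 argument, and every step checks out: the identity $|X+C|=|X+C'|+|X\setminus X_c|$, the containment $X_c+A+c\subseteq X+A+C'$ (which is the crucial inclusion), the bound on new elements by $|X+A|-|X_c+A|$, and the use of minimality in the correct direction. Note, however, that the paper does not prove this theorem at all---it is stated as a background tool with a citation to Tao and Vu's textbook, whose exposition proceeds via Pl\"unnecke's original graph-theoretic machinery: one builds the layered commutative addition graph, studies magnification ratios, and invokes a Menger-type min-cut/tensoring argument. Your route replaces that entire apparatus with the single inductive lemma about the minimizer $X$ of $|X+A|/|X|$. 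The two approaches yield the same bound $K^n|A|$, but the Petridis proof is substantially more elementary and self-contained, and it also extends more cleanly to mixed sums and differences $|mA-nA|$. One small point worth stating explicitly if you write this up: the key rearrangement---from $x+c=x'+c'$ conclude $x+a+c=x'+a+c'$---uses commutativity, so the argument is genuinely an argument about abelian groups, which is consistent with the ``additive group'' hypothesis but should be flagged.
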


If $A,B$ are subsets of an additive abelian group, define the \emph{additive energy} of $A$ and $B$ as 
	\[ E(A,B) = \#\{(a,a',b,b') \in A \times A \times B \times B : a+b=a'+b' \}. \]

\begin{theorem}[Balog-Szemer\'edi-Gowers \cite{tao-vu}]
	\label{thm:BSG}
	Let $A$ be a subset of an additive group. Given $c > 0$, there exist constants $C_1,C_2 > 0$ dependent only on $c$ such that, if $E(A,A) \geq K^{-c} \size{A}^3$, then there is a subset $A' \subseteq A$ such that $\size{A'} \geq K^{-C_1}\size{A}$ and $\size{2A'} \leq K^{C_2}\size{A'}$.
\end{theorem}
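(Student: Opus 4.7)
The plan is to follow the standard graph-theoretic approach to the Balog-Szemer\'edi-Gowers theorem. The hypothesis $E(A,A) \geq K^{-c}|A|^3$ asserts that the additive energy, equivalently the $L^2$-norm squared of the difference-representation function $r_{A-A}(d) := \#\{(a,b) \in A \times A : a - b = d\}$, is large. The strategy is to convert this global statistical information into nearly pointwise structural information on a large subset $A' \subseteq A$, then read off a bound on $\size{2A'}$.

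First I would carry out a popularity argument. Let $D := \{d : r_{A-A}(d) \geq K^{-c}|A|/2\}$ be the set of popular differences. Splitting $E(A,A) = \sum_d r_{A-A}(d)^2$ and bounding the contribution from $d \notin D$ by $(K^{-c}|A|/2)\sum_d r_{A-A}(d) = (K^{-c}|A|/2)|A|^2$ shows $\sum_{d \in D} r_{A-A}(d)^2 \geq K^{-c}|A|^3/2$, and then the trivial pointwise bound $r_{A-A}(d) \leq |A|$ yields $\sum_{d \in D} r_{A-A}(d) \geq K^{-c}|A|^2/2$; i.e.\ there are at least this many \emph{good} pairs $(a,b) \in A \times A$ with $a - b \in D$. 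I would then form the graph $G$ on vertex set $A$ whose edges are precisely the good pairs, so $G$ has edge density $\gtrsim K^{-c}$.

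The combinatorial heart of BSG is the graph lemma of Gowers: any graph of edge density $\alpha$ on $n$ vertices contains a vertex subset $A'$ of size $\gtrsim \alpha^{O(1)} n$ such that every pair $(a,b) \in A' \times A'$ is joined by $\gtrsim \alpha^{O(1)} n^2$ paths of length three in $G$. This is proved by dependent random choice: sample a random vertex and restrict attention to its neighborhood; with positive probability, most pairs in the neighborhood have large codegree, and a further cleaning pass removing low-participation vertices promotes this to a uniform lower bound. Granted this lemma, each pair $(a, b) \in A' \times A'$ decomposes along a length-three path $a - x - y - b$ as $(a - x) + (x - y) + (y - b)$, where all three summands are popular differences and hence each admits $\geq K^{-c}|A|/2$ representations as an element of $A - A$. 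Expanding yields $\gtrsim K^{-O(c)}|A|^5$ distinct $6$-tuples $(a_1,b_1,a_2,b_2,a_3,b_3) \in A^6$ with $a - b = (a_1-b_1)+(a_2-b_2)+(a_3-b_3)$; comparing against the universe $|A|^6$ of all such tuples produces $\size{A' - A'} \leq K^{O(c)} \size{A'}$.

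Finally, to pass from small difference-doubling to small sum-doubling I would apply Ruzsa's triangle inequality to obtain $\size{A' + A'} \leq K^{O(c)}\size{A'}$, after which \cref{thm:pluruz} is available if iterated sumsets are needed. The main obstacle is the graph lemma step: the original Balog-Szemer\'edi argument ran through Szemer\'edi regularity and yielded tower-type dependence of $C_1, C_2$ on $c$, whereas the polynomial dependence required for applications (in particular in this paper) demands Gowers's dependent random choice argument. The delicate bookkeeping is ensuring that a single further cleaning pass suffices to upgrade ``most pairs well-connected'' to ``all pairs in a still-large subset well-connected,'' since each cleaning step costs a polynomial factor of $\alpha$ in the size of $A'$.
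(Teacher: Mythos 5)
This statement is not proved in the paper; it is cited as a black box from Tao and Vu's \emph{Additive Combinatorics}, so there is no in-paper argument to compare against. Your outline is the standard modern proof (and is, up to presentation, what appears in the cited reference): the popularity/dyadic argument to pass from $E(A,A) \geq K^{-c}|A|^3$ to a dense graph of popular differences, Gowers's paths-of-length-three lemma via dependent random choice to extract a large subset in which every pair is joined by many such paths, the resulting $K^{O(c)}|A|^5$ six-tuple count bounding $|A'-A'|$, and then Ruzsa/Pl\"unnecke to convert a small difference set into a small sumset. Two small presentational caveats, neither a gap: the graph lemma is usually stated in bipartite form and yields a pair $A', B'$ of subsets with all cross-pairs well connected, from which the one-set statement follows by a standard symmetrization; and some versions of BSG are stated directly with $|A'-A'|$ or $|A'+A'|$ small, so the Ruzsa-triangle step at the end is only needed because the paper's formulation asks specifically for $|2A'|$. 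Your accounting of why the dependent-random-choice route (as opposed to regularity) is needed for polynomial dependence of $C_1,C_2$ on $c$ is exactly the right point to flag for this paper's application.
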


If $A,B$ are subsets of the group of nonzero reals under the operation of multiplication, then we call their sumset (as defined above) the \emph{product set}, denoted $A.B$, and we call their additive energy (as defined above) \emph{multiplicative energy} instead.

\begin{theorem}[Croot-Hart \cite{crohar}]
	\label{thm:crohar}
	For every $c > 0$ there exist $\beta > 0$ and $k \geq 1$ such that the following holds for all sufficiently large $N$:
	
	Let $B \subseteq \R$ have size $\size{B} = N$. If $\size{B.B} \leq N^{1+\beta}$, then $\size{kB} \geq N^c$.
\end{theorem}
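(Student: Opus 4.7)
The plan is to exploit the fact that a set with small product set must be multiplicatively structured---essentially contained in a generalized geometric progression---and to then show that iterated sumsets of such structured sets grow rapidly as the number of summands $k$ increases.

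First, by pigeonholing to the positive or negative half of $B$ I lose only a factor of two, so I may assume $B \subseteq \R_{>0}$; then $L := \log B \subseteq \R$ satisfies $|L| = N$ and $|L+L| = |B.B| \leq N^{1+\beta}$. Applying \cref{thm:pluruz} in $(\R,+)$ yields $|mL| \leq N^{1+m\beta}$ for every $m \geq 1$, so $L$ has small doubling at every scale. I would then invoke Freiman's theorem over $\R$ to conclude that $L$ is contained in a generalized arithmetic progression $P$ of rank $d = d(\beta)$ and volume at most $C(\beta) N$, both parameters effectively controlled by $\beta$. Exponentiating converts $P$ into a generalized geometric progression $G$ with real generators $q_1, \ldots, q_d$, and $B \subseteq G$.

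To lower bound $|kB|$, label each $b \in B$ by its exponent vector in $\Z^d$ and consider the map sending a $k$-element multiset of exponent vectors to the corresponding real sum $b_{i_1} + \cdots + b_{i_k}$. Generically this map is essentially injective (by transcendence or general-position arguments on $q_1, \ldots, q_d$), and one obtains $|kB| \gtrsim \binom{|B|}{k}/k!$ or similar, which exceeds $N^c$ once $k$ is large enough relative to $c$. Choosing $\beta$ small in terms of $c$ then matches the Freiman parameters to this bound.

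The main obstacle is the degenerate case in which the generators $q_i$ satisfy algebraic relations forcing many collisions among $k$-fold sums in $G$. To handle this I would either (i) pass to a large subset of $B$ lying in a lower-dimensional subprogression and induct on $d$, ultimately reducing to the one-dimensional case $B' \subseteq \{q, q^2, \ldots, q^m\}$ where sums of distinct powers of a fixed real number can be separated via the Subspace Theorem in the algebraic case, or (ii) quantify the number of additive coincidences within $G$ directly and show it is dominated by the total count of $k$-sums, again giving the lower bound $|kB| \geq N^c$. The correct quantitative dependence---$\beta$ sufficiently small relative to $c$ and $k$ correspondingly large, roughly polynomial in $1/\beta$---should emerge from matching these bounds.
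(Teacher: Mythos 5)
This theorem is cited from Croot--Hart \cite{crohar} as an external tool; the paper does not prove it, so there is no internal proof to compare against. Evaluating the proposal on its own terms, there is a fatal gap at the Freiman step. After passing to $L = \log B$, the hypothesis gives $|L + L| \leq N^{1+\beta} = N^{\beta}|L|$, i.e.\ doubling constant $K = N^{\beta}$, which is \emph{unbounded} as $N \to \infty$ for any fixed $\beta > 0$. Freiman's theorem (in the form of \cref{thm:freiman} or any known quantitative version) produces a GAP whose rank and volume ratio grow with $K$; there is no version that yields rank $d(\beta)$ and volume $C(\beta)N$ with $d, C$ independent of $N$ when $K = N^{\beta}$. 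So the structural reduction to a bounded-rank GGP containing $B$ simply does not follow, and the entire downstream argument rests on it.

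A second, independent problem is that the ``essentially injective'' claim for the map from $k$-tuples of exponent vectors to real $k$-fold sums is precisely the heart of the matter, and is not actually argued. The generators $q_1, \dots, q_d$ are arbitrary nonzero reals with no genericity or transcendence assumption; algebraic relations among them can collapse many $k$-fold sums, and the suggested remedies (induction on $d$, the Subspace Theorem in the algebraic case, or ``quantifying coincidences'') are sketched directions rather than an argument. The actual Croot--Hart proof does not proceed via Freiman at all: it works directly with the multiplicative structure of $B$ (e.g.\ the behavior of the ratio set and iterated sum-product manipulations), and it achieves only the modest bound $|kB| \geq N^{c}$ rather than anything close to the $N^{k(1-o(1))}$ implicit in the proposal---a hint that the near-injectivity being assumed is too strong to hold in the required generality. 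The overall plan (log, Pl\"unnecke--Ruzsa, Freiman, count $k$-sums in a GGP) is a natural first idea, but both the structural step and the counting step as written would fail.
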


Let $A$ be a subset of an additive abelian group, and let $A^k$ denote the $k$-fold cartesian product of $A$ with itself. If $S \subseteq A^k$, then define 
	\[ \Sigma(S) := \set{ a_1 + \cdots + a_k }{ (a_1,...,a_k) \in S }. \]

\begin{theorem}[Borenstein-Croot \cite{borcro2}]
	\label{thm:borcroBSG}
	For every $0 < \eps < 1/2$ and $c > 1$, there exists $\delta > 0$ such that the following holds for all $k,n$ sufficiently large:
	
	Let $A$ be a subset of an additive abelian group with $\size{A} = n$, and let $S \subseteq A^k$. If $\size{S} \geq \size{A}^{k-\delta}$ and $\size{\Sigma(S)} < \size{A}^c$, then there exists a subset $A' \subseteq A$ with $\size{A'} \geq \size{A}^{1-\eps}$ such that for all $\ell \geq 1$, $\size{\ell A'} \leq \size{A'}^{c(1+\eps\ell)}$.
\end{theorem}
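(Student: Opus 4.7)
The plan is to reduce the $k$-variable sumset condition to a two-variable additive-energy statement via slicing, apply the Balog--Szemer\'edi--Gowers theorem (\cref{thm:BSG}) to extract a subset $A' \subseteq A$ with small doubling, and then use Pl\"unnecke--Ruzsa (\cref{thm:pluruz}) to convert small doubling into the desired bounds on every iterated sumset $\ell A'$.

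Concretely, I would first carry out a slicing step. Since $|S| \geq |A|^{k-\delta}$, a pigeonhole over the last $k-2$ coordinates produces $(a_3^*, \ldots, a_k^*) \in A^{k-2}$ for which the slice
\[
T := \{(a_1, a_2) \in A \times A : (a_1, a_2, a_3^*, \ldots, a_k^*) \in S\}
\]
satisfies $|T| \geq |A|^{2-\delta}$. Every sum $a_1+a_2$ with $(a_1,a_2)\in T$ lies in the translate $\Sigma(S) - (a_3^* + \cdots + a_k^*)$, of size less than $|A|^c$. Cauchy--Schwarz then forces
\[
E(A,A) \;\geq\; \sum_{s} r_T(s)^2 \;\geq\; \frac{|T|^2}{|A|^c} \;\geq\; |A|^{4-2\delta-c} \;=\; |A|^{3-\eta},
\]
where $\eta := c - 1 + 2\delta > 0$ and $r_T(s) := |\{(a_1,a_2)\in T : a_1+a_2=s\}|$.

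Writing this energy bound as $E(A,A) \geq K^{-c_0}|A|^3$ with $K = |A|^{\eta/c_0}$ for a parameter $c_0$ to be tuned, I would invoke \cref{thm:BSG} to produce a subset $A' \subseteq A$ with $|A'| \geq K^{-C_1}|A|$ and $|2A'| \leq K^{C_2}|A'|$. Taking $c_0$ large enough in terms of $c$ and $\eps$, and $\delta$ correspondingly small, yields $|A'| \geq |A|^{1-\eps}$ together with a doubling ratio $|2A'|/|A'| \leq |A'|^\alpha$ for some $\alpha \leq c\eps$. Finally, \cref{thm:pluruz} amplifies this bound to $|\ell A'| \leq |A'|^{1+\alpha\ell} \leq |A'|^{c(1+\eps\ell)}$ for every $\ell \geq 1$.

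The main obstacle is the parameter optimization in the BSG step. Since $\eta \geq c - 1$ is bounded away from zero once $c > 1$ is fixed, the energy lower bound above cannot be made better than $|A|^{3-(c-1)}$ by slicing a single fixing, so we must rely on a quantitatively sharp form of Balog--Szemer\'edi--Gowers whose constants satisfy $C_2(c_0)/c_0 \lesssim c\eps/(c-1)$, or else strengthen the slicing step. The natural strengthening exploits the hypothesis that $k$ is large: by averaging over many independent generic fixings of $(a_3^*, \ldots, a_k^*)$, or by applying Cauchy--Schwarz intrinsically to the full $k$-fold sum map $\sigma : S \to \Sigma(S)$ and re-grouping the resulting $2k$-tuples two coordinates at a time, one can drive the exponent $\eta$ in the energy bound down to $o(1)$ as $k \to \infty$. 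This brings the subsequent BSG and Pl\"unnecke--Ruzsa applications into a regime where standard quantitative bounds suffice to close the argument.
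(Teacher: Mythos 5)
This theorem is stated in the paper only as a citation to Borenstein--Croot \cite{borcro2}; the paper gives no proof of it, so there is no internal argument to compare against. Assessed on its own terms, your proposal is sound up to the energy bound but does not close.

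Your slicing step and the Cauchy--Schwarz estimate are correct: fixing $(a_3^*,\ldots,a_k^*)$ by pigeonhole gives $T \subseteq A \times A$ with $\size{T} \geq \size{A}^{2-\delta}$, and since all sums $a_1+a_2$ with $(a_1,a_2)\in T$ land in a translate of $\Sigma(S)$ you get $E(A,A) \geq \size{A}^{3-\eta}$ with $\eta = c-1+2\delta$. The fatal issue, which you flag but do not resolve, is that $\eta$ is bounded below by $c-1 > 0$ independently of $\eps$, $\delta$, and $k$. The rescaling $K=\size{A}^{\eta/c_0}$ does nothing: in \cref{thm:BSG} the constants $C_1(c_0),C_2(c_0)$ grow linearly in $c_0$ (one sets $K'=K^{c_0}$ and applies the $c_0=1$ case), so $C_1/c_0$ and $C_2/c_0$ are bounded away from zero and the resulting subset has size only $\size{A}^{1-\Theta(c-1)}$ with doubling exponent $\Theta(c-1)$. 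For fixed $c>1$ and small $\eps$ this is far outside the required range $\size{A'}\geq \size{A}^{1-\eps}$, $\size{2A'}\leq \size{A'}^{1+c\eps}$, the latter being what Pl\"unnecke--Ruzsa actually needs to give $\size{\ell A'}\leq \size{A'}^{c(1+\eps\ell)}$ uniformly in $\ell$. Your two proposed fixes are not carried out and neither is obviously correct: averaging over many slices still gives average energy $\size{A}^{3-\eta}$ and does not shrink $\eta$; and Cauchy--Schwarz on the full map $\sigma\colon S\to\Sigma(S)$ yields $\#\{(s,s'):\sigma(s)=\sigma(s')\}\geq \size{A}^{2k-2\delta-c}$, but regrouping two coordinates at a time produces constraints $a_1+a_2-a_1'-a_2'=\mathrm{const}$ with a varying constant, not the zero constant required for genuine two-fold additive energy. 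Converting the large-$k$ information into a usable two-variable statement is precisely the nontrivial content of the Borenstein--Croot theorem, and as written your argument has a gap exactly there.
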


%%%%%%%%%%%%%%%%%%%%%%%%%%%%%%%%%%%%%%%%%%%%%%%%%%%%%%%%%%%%%%%%%%%%%%%%%%
%%% BASICS %%%%%%%%%%%%%%%%%%%%%%%%%%%%%%%%%%%%%%%%%%%%%%%%%%%%%%%%%%%%%%%
%%%%%%%%%%%%%%%%%%%%%%%%%%%%%%%%%%%%%%%%%%%%%%%%%%%%%%%%%%%%%%%%%%%%%%%%%%

\section{Preliminary Definitions and Facts}

	Let $A \subset \R$ be a finite subset with $\size{A} = n$. We call a line $\ell$ in $\R^2$ \emph{$k$-rich} if it intersects at least $k$ points in $A \times A$. A line can be at most $n$-rich; we will concern ourselves mainly with lines that are $n^{1-\delta}$-rich for some small positive $\delta$.
	
	If $\ell: y = \lambda x + b$ is a line in $\R^2$, then $\ell^{-1}: y = \frac{1}{\lambda}x - \frac{b}{\lambda}$ is the line such that $\ell \circ \ell^{-1} = \ell^{-1} \circ \ell$ is the identity function on $\R^2$ (i.e., the line $y=x$).
	
	If $L$ is a set of lines in $\R^2$, a subset $S \subseteq L$ is a \emph{star family} if all members of $S$ intersect at some common point. We say $L$ is in \emph{general position} if no two lines in $L$ are parallel and $L$ has no star families of size $3$. If $L$ has no two parallel lines and no star families of size greater than some constant $C \geq 2$, then we say $L$ is in near-general position.
	
	In the next two lemmas, we establish that many pairs of lines in a set of rich lines can be combined to obtain lines of slightly less richness.

\begin{lemma} 
	\label{lemma:easy1}
	Given sets $A_1,...,A_k \subseteq [n]$, each of size at least $n^{1-\delta}$, we must have at least $k^2n^{-2\delta}/2$ pairs of sets $(A_i,A_j)$ with $\size{A_i \cap A_j} \geq n^{1-2\delta}/2$.
\end{lemma}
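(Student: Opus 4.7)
The plan is to use a standard Cauchy-Schwarz and popularity argument on the incidence structure between the sets $A_i$ and the ground set $[n]$.

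First I would set up the double counting. For each $x \in [n]$, let $d_x := \#\{i : x \in A_i\}$ denote the number of sets containing $x$. Then
\[ \sum_{x \in [n]} d_x \;=\; \sum_{i=1}^k \size{A_i} \;\geq\; k n^{1-\delta}, \]
and, by switching the order of summation,
\[ \sum_{i,j=1}^k \size{A_i \cap A_j} \;=\; \sum_{x \in [n]} d_x^2. \]

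Next I would apply the Cauchy-Schwarz inequality to the right-hand side, using that the sum runs over at most $n$ terms:
\[ \sum_{x \in [n]} d_x^2 \;\geq\; \frac{1}{n}\left( \sum_{x \in [n]} d_x \right)^2 \;\geq\; \frac{(k n^{1-\delta})^2}{n} \;=\; k^2 n^{1-2\delta}. \]
So the total intersection mass $\sum_{i,j} \size{A_i \cap A_j}$ is at least $k^2 n^{1-2\delta}$.

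Finally I would use a popularity/pigeonhole split to extract many heavy pairs. Let $P$ be the set of ordered pairs $(i,j)$ with $\size{A_i \cap A_j} \geq n^{1-2\delta}/2$. The contribution to $\sum_{i,j} \size{A_i \cap A_j}$ from pairs outside $P$ is at most $k^2 \cdot n^{1-2\delta}/2$, so pairs in $P$ contribute at least $k^2 n^{1-2\delta}/2$. Since each individual intersection has size at most $n$, this forces $\size{P} \geq k^2 n^{-2\delta}/2$, as desired.

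There is no real obstacle here --- the proof is purely a Cauchy-Schwarz plus pigeonhole argument. The only place one must be slightly careful is matching the constants (in particular the factor of $1/2$) when splitting the intersection mass between light and heavy pairs, and in making sure the intersections are being counted as ordered pairs so the stated bound of $k^2 n^{-2\delta}/2$ is achieved.
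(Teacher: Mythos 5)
Your proof is correct and takes essentially the same approach as the paper: lower-bound $\sum_{i,j}\size{A_i \cap A_j} = \sum_x d_x^2$ via Cauchy-Schwarz, then a popularity split to isolate the heavy pairs. The paper phrases the second step as a proof by contradiction (assuming fewer than $k^2 n^{-2\delta}/2$ heavy pairs and deriving a total mass strictly below $k^2 n^{1-2\delta}$), whereas you argue directly by subtracting the light-pair contribution and dividing by the trivial upper bound $n$ on each intersection; the two are logically identical.
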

\begin{proof}
Let $B = \{ (i,j) : \size{A_i \cap A_j} \geq \frac{n^{1-2\delta}}{2} \}$.  For a contradiction, suppose $\size{B} < \frac{1}{2} k^2 n^{-2\delta}$. Then 
\begin{multline*}
	\sum_{i,j} \size{A_i \cap A_j} = \sum_{(i,j) \in B} \size{A_i \cap A_j} + \sum_{(i,j) \in B^c} \size{A_i \cap A_j} < n \cdot \frac{1}{2}k^2 n^{-2\delta} + \\ \left(k^2 - \frac{1}{2}k^2 n^{-2\delta}\right)\frac{n^{1-2\delta}}{2} < k^2 n^{1-2\delta}. \end{multline*} % check this line
However, letting $d(x) := \#\{i \in [k]: x \in A_i\}$, we have by Cauchy-Schwarz
\[ \sum_{i,j} \size{A_i \cap A_j} = \sum_{x=1}^n d(x)^2 \geq \left( n^{-1/2} \sum_{x \in [n]} d(x) \right)^2 = n^{-1} \left( \sum_i \size{A_i} \right)^2 \geq k^2n^{1-2\delta}. \]
\end{proof}

\begin{lemma}
	\label{lemma:easy1.5}
	Let $A \subset \R$, and let $L$ be a set of lines in $\R^2$ such that each line in $L$ is $n^{1-\delta}$-rich in $A \times A$. Then, for at least $\frac{1}{2} \size{L}^2 n^{-2\delta}$ pairs of lines $(\ell,\ell') \in L \times L$, $\ell^{-1} \circ \ell$ is $\frac{1}{2} n^{1-2\delta}$-rich in $A \times A$.
\end{lemma}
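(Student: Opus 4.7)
The statement almost certainly contains a typo: since $\ell^{-1}\circ\ell$ is the identity, the intended composition is $\ell^{-1}\circ\ell'$ (or equivalently $\ell'^{-1}\circ\ell$), and I will proceed accordingly.

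The plan is to reduce this to \cref{lemma:easy1} applied to the projections onto the $y$-axis of the rich intersections. For each $\ell \in L$, write $\ell:y=\lambda x+b$ and define
\[ A_\ell := \set{x \in A}{\ell(x) \in A} \qaq B_\ell := \ell(A_\ell) = \set{y \in A}{\ell^{-1}(y) \in A}. \]
Since $\ell$ is $n^{1-\delta}$-rich in $A\times A$, the set of its points in $A\times A$ is exactly $\{(x,\ell(x)):x\in A_\ell\}$, so $\size{A_\ell}=\size{B_\ell}\geq n^{1-\delta}$. After identifying $A$ with $[n]$, the collection $\{B_\ell\}_{\ell\in L}$ is a family of $|L|$ subsets of $[n]$ each of size at least $n^{1-\delta}$, so \cref{lemma:easy1} produces at least $\tfrac{1}{2}\size{L}^2 n^{-2\delta}$ pairs $(\ell,\ell')\in L\times L$ for which
\[ \size{B_\ell \cap B_{\ell'}} \;\geq\; \tfrac{1}{2}n^{1-2\delta}. \]

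For any such pair, I would verify that each $y \in B_\ell \cap B_{\ell'}$ yields a point of $(A\times A)$-richness on the line $\ell^{-1}\circ\ell'$. Indeed, both $\ell^{-1}(y)$ and $(\ell')^{-1}(y)$ lie in $A$ by the defining property of $B_\ell$ and $B_{\ell'}$, so the point
\[ \bigl((\ell')^{-1}(y),\,\ell^{-1}(y)\bigr) \]
belongs to $A\times A$. It lies on the graph of $\ell^{-1}\circ\ell'$ because $(\ell^{-1}\circ\ell')((\ell')^{-1}(y))=\ell^{-1}(y)$. Since $(\ell')^{-1}$ is a bijection, distinct $y$'s in $B_\ell\cap B_{\ell'}$ give distinct $x$-coordinates, so the line $\ell^{-1}\circ\ell'$ contains at least $\tfrac{1}{2}n^{1-2\delta}$ points of $A\times A$, proving the claim.

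There is essentially no obstacle here beyond bookkeeping. The only mildly subtle point is choosing the correct ``side'' of the incidence to feed into \cref{lemma:easy1}: one must use $B_\ell$ (the $y$-coordinates) rather than $A_\ell$ (the $x$-coordinates), because after composing with $\ell^{-1}$ the relevant shared information between $\ell$ and $\ell'$ is where their rich $y$-values coincide in $A$. With this choice the argument is immediate.
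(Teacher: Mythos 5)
Your proof is correct and follows essentially the same route as the paper: the set you call $B_\ell$ is exactly the paper's $Y(\ell)$, and the paper likewise applies \cref{lemma:easy1} to these $y$-coordinate sets and then observes that a large intersection $Y(\ell)\cap Y(\ell')$ forces $\ell^{-1}\circ\ell'$ to be rich. You simply spell out the final verification (that each $y$ in the intersection gives the point $((\ell')^{-1}(y),\ell^{-1}(y))$ of $\ell^{-1}\circ\ell'$ in $A\times A$) more explicitly than the paper does, and you correctly identify the typo $\ell^{-1}\circ\ell$ for $\ell^{-1}\circ\ell'$.
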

\begin{proof}
For each line $\ell: y = \lambda x + b$, let $X(\ell) = \set{x \in A}{ \lambda x + b \in A }$, and similarly let $Y(\ell) = \set{ y \in A }{  \lambda^{-1}(y-b) \in A }$. Observe that $Y(\ell) = X(\ell^{-1})$. Thus, for $(\ell,\ell') \in L \times L$, if $\size{Y(\ell) \cap Y(\ell')} \geq \frac{1}{2} n^{1-2\delta}$, then $\ell^{-1} \circ \ell'$ is $\frac{1}{2} n^{1-2\delta}$-rich in $A \times A$. Observe that each $Y(\ell)$ has size at least $n^{1-\delta}$. By Lemma \ref{lemma:easy1}, at least $\frac{1}{2} \size{L}^2 n^{-2\delta}$ pairs of sets $(Y(\ell),Y(\ell'))$ have intersection of size at least $\frac{1}{2} n^{1-2\delta}$.
\end{proof}

	We define the operation $*$ by $\ell_1 * \ell_2 = \ell_1^{-1} \circ \ell_2$. This formalizes the notion described earlier of combining rich lines in $L$ to form new rich lines (at the cost of a small amount of richness). Given two sets $L,L'$ of $n^{1-\delta}$-rich lines, we would like to consider the set of lines $\ell*\ell'$ which retain a large amount of richness in $A \times A$.
	\begin{equation}
		\label{eq:prerichlines}
		\set{ \ell * \ell' }{ \ell \in L, \ell' \in L', \size{\ell*\ell' \cap (A \times A)} \geq n^{1-2\delta}/2 }.
	\end{equation}

\begin{corollary}
	\label{cor:easy2}
	Given a set $L$ of lines which are $n^{1-\delta}$-rich in $A \times A$, there exist at least $\frac{1}{2} \size{L} n^{-2\delta}$ distinct lines of the form $\ell * \ell'$ which are $\frac{1}{2} n^{1-2\delta}$-rich in $A \times A$.
\end{corollary}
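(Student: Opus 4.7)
The plan is to combine Lemma \ref{lemma:easy1.5} with an averaging argument, and then exploit the fact that composition with a fixed invertible affine map is injective on lines. Lemma \ref{lemma:easy1.5} already hands us at least $\tfrac{1}{2}|L|^2 n^{-2\delta}$ ordered pairs $(\ell,\ell') \in L \times L$ for which $\ell * \ell' = \ell^{-1}\circ\ell'$ is $\tfrac{1}{2} n^{1-2\delta}$-rich in $A \times A$. I will call these the \emph{good pairs}.

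Next I would apply pigeonhole to the first coordinate. Since there are at least $\tfrac{1}{2}|L|^2 n^{-2\delta}$ good pairs distributed among $|L|$ choices of $\ell$, some fixed $\ell_0 \in L$ participates as the first coordinate in at least $\tfrac{1}{2}|L| n^{-2\delta}$ good pairs $(\ell_0,\ell')$. Call the set of such $\ell'$ by $L_0 \subseteq L$.

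The key observation is that for this fixed $\ell_0$, the map $\ell' \mapsto \ell_0 * \ell' = \ell_0^{-1}\circ \ell'$ is injective on $L$: if $\ell_0^{-1}\circ\ell' = \ell_0^{-1}\circ\ell''$, composing on the left with $\ell_0$ yields $\ell' = \ell''$. Therefore the set $\{\ell_0 * \ell' : \ell' \in L_0\}$ consists of at least $|L_0| \geq \tfrac{1}{2}|L|n^{-2\delta}$ distinct lines, each of which is $\tfrac{1}{2} n^{1-2\delta}$-rich in $A\times A$ by choice of $L_0$, completing the proof.

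I do not expect any real obstacle here; the only thing to double-check is that the injectivity statement holds at the level of lines (and not merely as maps $\R^2 \to \R^2$), but this is immediate from the group-like structure on affine lines equipped with the operation $\circ$.
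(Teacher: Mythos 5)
Your proof is correct and rests on exactly the same two ingredients as the paper's: Lemma \ref{lemma:easy1.5} gives $\geq \tfrac{1}{2}|L|^2 n^{-2\delta}$ good ordered pairs, and the injectivity of $\ell' \mapsto \ell_1^{-1}\circ\ell'$ for fixed $\ell_1$ converts a count of pairs into a count of distinct lines. The only difference is organizational: the paper bounds the fiber size of $*$ globally (each line in the image has at most $|L|$ preimages, since the first coordinate determines the second) and divides, whereas you pigeonhole on the first coordinate to find a single $\ell_0$ that participates in $\geq \tfrac{1}{2}|L|n^{-2\delta}$ good pairs and then observe those outputs are distinct. The two arguments are equivalent here and yield the identical bound; your version has the minor cosmetic feature that all the distinct lines it produces share the same left factor $\ell_0$, which is neither needed nor harmful for the corollary.
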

\begin{proof}
	There can be at most $\size{L}$ pairs which map to a given line in $L*L$, or else there exists $\ell_1 \in L$ and $\ell_2 \neq \ell_3$ such that $\ell_1^{-1} \circ \ell_2 = \ell_1^{-1} \circ \ell_3$, a contradiction. By \cref{lemma:easy1.5}, the result therefore follows.
\end{proof}

In addition to the richness of our new lines in $A \times A$, we will want to have control over the number of pairs $(\ell_1,\ell_2)$ which map to the same line under $*$. Let $\Ps(\ell)$ denote the set of pairs $(\ell_1,\ell_2)$ such that $\ell_1*\ell_2 = \ell$; if $X$ is a set of lines, let $\Ps(X)$ be the union of the sets $\Ps(\ell)$ over all $\ell \in X$. For each $0 \leq i \leq \ceil{\log_2\size{L}}$, let $L_i$ be the set of those lines $\ell$ in the set (\ref{eq:prerichlines}) such that 
	\[ 2^{i-1} < \size{\Ps(\ell)} \leq 2^i, \]
and let
	\[ N_i = \sum_{\ell \in L_i} \size{\Ps(\ell)}. \]
Then 
	\[ N_0 + N_1 + \cdots + N_K = \#\left\{(\ell,\ell'): \size{\ell*\ell' \cap (A \times A)} \geq \frac{1}{2} n^{1-2\delta}\right\} \geq \frac{1}{2}\size{L}^2n^{-2\delta} \]
by \cref{lemma:easy1.5}. By the pigeonhole principle, at least one $N_i$ satisfies 
	\[ N_i \geq \frac{\size{L}^2n^{-2\delta}}{2\log_2{\size{L}}}. \]
For the maximal such $i$, we define $L*L$ to be 
	\[ L*L := \left\{ \ell*\ell' : (\ell,\ell') \in \Ps(L_i) \right\}. \]
% at the cost of a factor of how many lines?

If $L$ is a set of $n^{1-\delta}$ rich lines, we recursively define the sequence of $j$-fold $*$ operations on $L$ as follows: take $L^{*2} := L * L$ and $L^{*j} := L^{*(j-1)}*L^{*(j-1)}$. We remark that the operation $*$ is not associative: for example, $(L * L) * (L * L)$ will not in general equal $((L * L) * L) * L$.

%%%%%%%%%%%%%%%%%%%%%%%%%%%%%%%%%%%%%%%%%%%%%%%%%%%%%%%%%%%%%%%%%%%%%
%%% LINES IN NEAR-GENERAL POSITION %%%%%%%%%%%%%%%%%%%%%%%%%%%%%%%%%%
%%%%%%%%%%%%%%%%%%%%%%%%%%%%%%%%%%%%%%%%%%%%%%%%%%%%%%%%%%%%%%%%%%%%%
\section{Lines in Near-General Position}

The following theorem illustrates the behavior of a near-general position set of lines under the operation of $*$.

\begin{theorem}
	\label{thm:4cases}
	For all $0 < \eps < 1$, there exists $0 < \alpha_0 < \eps$ such that for all $0 < \alpha < \alpha_0$, there exists $0 < \delta_0 < \alpha$ such that for all $0 < \delta < \delta_0$ and for finite sets $A$ with $\size{A} = n$ sufficiently large, the following holds:
	
	If $L$ is a set of at least $n^\eps$ lines in near-general position (with star families bounded by some constant $C = C(\eps,\alpha) > 0$) which are $n^{1-\delta}$-rich in $A \times A$, then:
	\begin{enumerate}
		\item[(i)] If $L*L$ contains a family $P$ of parallel lines, then $\size{P} \leq 2\size{L*L}n^{2\delta}/\size{L}$.
		\item[(ii)] If $L*L$ contains a star family $S$, then $\size{S} \leq 2C\size{L*L}n^{2\delta}/\size{L}$.
		\item[(iii)] If $P_\lambda$ denotes the set of lines in $L*L$ with common slope $\lambda$, then $\size{P_\lambda} \geq n^\alpha$ for at most $n^\alpha$ numbers $\lambda$.
		\item[(iv)] If $S_p$ denotes the set of lines in $L*L$ with common meeting point $p$, then $\size{S_p} \geq n^\alpha$ for at most $n^\alpha$ points $p$.
	\end{enumerate}
\end{theorem}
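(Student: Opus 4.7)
My plan is to dispatch parts (i) and (ii) by essentially elementary slope/incidence accounting against the level-set definition of $L*L$, and to attack (iii) and (iv) via multiplicative/additive energy combined with the Balog--Szemer\'edi--Gowers, Pl\"unnecke--Ruzsa, and Croot--Hart tools from Section 2. For (i), writing $\ell_j : y = \mu_j x + b_j$ shows that $\ell_1 * \ell_2 = \ell_1^{-1}\circ\ell_2$ has slope $\mu_2/\mu_1$; since $L$ is in near-general position its slopes are pairwise distinct, so for each fixed $\lambda$ at most $|L|$ pairs $(\ell_1,\ell_2) \in L \times L$ satisfy $\mu_2 = \lambda\mu_1$. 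Since the level set $L_i$ was chosen so that every $\ell \in L*L$ has $|\Ps(\ell)| > 2^{i-1}$, a parallel family $P$ in $L*L$ of common slope $\lambda$ satisfies $|P| \cdot 2^{i-1} \leq |L|$; combined with the construction bound $|L*L|\cdot 2^i \gtrsim |L|^2 n^{-2\delta}$ this yields $|P| \leq 2|L*L| n^{2\delta}/|L|$, with logarithmic losses absorbed into $C$. For (ii), $\ell_1^{-1}\circ\ell_2$ passes through $p = (p_1,p_2)$ iff $\ell_2(p_1) = \ell_1(p_2)$, i.e., iff $\ell_2$ passes through the fixed point $(p_1,\ell_1(p_2))$; by near-general position at most $C$ lines of $L$ pass through such a point, so for each $\ell_1$ at most $C$ choices of $\ell_2$ exist, giving at most $C|L|$ pairs mapping into any star family at $p$, and the same level-set bookkeeping gives $|S| \leq 2C|L*L|n^{2\delta}/|L|$.

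For (iii) and (iv) I would argue by contradiction and lean on the tools from Section 2. Suppose more than $n^\alpha$ slopes $\lambda$ satisfy $|P_\lambda| \geq n^\alpha$ (the star-family case (iv) is symmetric). Each such $\lambda$ is witnessed by at least $n^\alpha \cdot 2^{i-1}$ pairs $(\mu_1,\mu_2)$ of $L$-slopes with $\mu_2/\mu_1 = \lambda$, so the multiplicative energy of the slope set $M$ of $L$ is at least $n^{3\alpha} \cdot 4^{i-1}$. A multiplicative Balog--Szemer\'edi--Gowers (\cref{thm:BSG} transferred to the multiplicative group) then extracts $M'\subseteq M$ with $|M'|\ge n^{-c_1}|M|$ and $|M'.M'|\leq n^{c_2}|M'|$; Pl\"unnecke--Ruzsa (\cref{thm:pluruz}) controls further product iterates, and Croot--Hart (\cref{thm:crohar}) converts the multiplicative smallness into additive blowup $|kM'|\geq |M'|^c$. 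Since every slope of $L$ lies in $(A-A)/(A-A)$ via the richness of each line, this growth, appropriately lifted back to $L$, would force more $n^{1-\delta}$-rich lines of distinct slopes than Szemer\'edi--Trotter allows once the parameters satisfy $\delta \ll \alpha \ll \eps$. Part (iv) runs in parallel on the intercept set of $L$, using additive energy, the ordinary \cref{thm:BSG}, and again \cref{thm:crohar}.

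The principal obstacle is this last translation: chaining the polynomial-in-$n$ energy surplus through Balog--Szemer\'edi--Gowers, Pl\"unnecke--Ruzsa, and Croot--Hart while keeping the parameters $\delta \ll \alpha \ll \eps$ consistent, and then ensuring that the extracted subset $M'$, which is only a fraction of $M$, still corresponds to enough genuinely rich lines of $L$ that its inflated additive image forces a violation of Szemer\'edi--Trotter. I expect the Borenstein--Croot refinement (\cref{thm:borcroBSG}) to be the right tool for this last step, replacing $M'$ by a further subset on which every iterated sumset is uniformly controlled, at which point the contradiction with the incidence bound becomes quantitative. This balancing act between growth inequalities and incidence counting is what I expect Section 5 to carry out in detail.
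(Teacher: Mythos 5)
Your proofs of parts (i) and (ii) are essentially correct and match the paper's approach; your treatment of (ii) is arguably a small simplification, since observing directly that $\ell_1^{-1}\circ\ell_2$ passes through $p=(p_1,p_2)$ if and only if $\ell_2$ passes through $(p_1,\ell_1(p_2))$ bypasses the paper's translation-to-the-$y$-axis reduction while arriving at the same conclusion (at most $C\size{L}$ preimage pairs per star in $L*L$). For (iii) the paper simply cites \cite{borcro1}, and your multiplicative-energy sketch is a reasonable caricature of the kind of argument one expects there.

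The genuine gap is in part (iv). You claim it ``runs in parallel on the intercept set of $L$, using additive energy, the ordinary BSG, and again Croot--Hart,'' as if (iv) were just (iii) with addition in place of multiplication. It is not, and this is where the real work of the paper lies. The difficulty in (iv) is that one is handed $k\approx n^\alpha$ \emph{distinct} star centers $x_1,\dots,x_k$, each supporting its own moderately large star family, and must show they cannot all coexist among rich lines. This is not a single-set energy problem and the contradiction must aggregate information across all $k$ stars simultaneously. The paper does this through two new lemmas. \cref{lemma:growth} (proved via \cref{thm:borcroBSG} and \cref{thm:crohar}) says that a near-full subset $S\subseteq A_1\times\cdots\times A_k$, with each $A_i$ of small multiplicative doubling, cannot have a small sum-set $\Sigma(S)$. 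The workhorse \cref{lem:graphs} then builds, for each center $x_i$, a bipartite graph on $B\sqcup B$ with edges encoding $\lambda(b-x_i)=b'-x_i$ for $\lambda\in C_i$, iteratively wedges these graphs into long $(2^t+1)$-partite path structures, and uses a Cauchy--Schwarz path-counting lemma (\cref{lemma:lemmaA}) together with \cref{lemma:growth} to force a contradiction out of the algebraic identity \eqref{theequation1}. The proof of (iv) itself reduces to this setup by (a) arranging the centers to have distinct $x$-coordinates via the symmetry $\ell\in L*L \iff \ell^{-1}\in L*L$; (b) iterating each star family $S_i$ under $*$ and using \cref{thm:ST} to locate a level $j$ where the slope sets $M_{i,j}$ stop growing; (c) applying BSG at that level to extract $M_i'\subseteq M_{i,j}$ with small \emph{multiplicative} (not additive) doubling; and (d) feeding $C_i=M_i'$, $B=A$ into \cref{lem:graphs}. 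Nothing in your outline anticipates this multi-star, graph-composition machinery, nor the fact that (iv) is governed by the multiplicative structure of slopes around the fixed points $x_i$ rather than additive structure of intercepts. The single BSG-plus-Croot--Hart pass you envision would not even get started on controlling $n^\alpha$ distinct centers at once, so (iv) requires a qualitatively different and substantially more involved argument than the one you propose.
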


Conditions (i), (ii), and (iv) will be shown in this paper. Condition (iii) is shown to hold in \cite{borcro1}.

%%%%%%%%%%%%%%%%%%%%%%%%%%%%%%%%%%%%%%%%%%%%%%%%%%%%%%%%%%%%%%%%%%%%%%%%%%
%%% LARGE PARALLEL AND STAR FAMILIES %%%%%%%%%%%%%%%%%%%%%%%%%%%%%%%%%%%%%
%%%%%%%%%%%%%%%%%%%%%%%%%%%%%%%%%%%%%%%%%%%%%%%%%%%%%%%%%%%%%%%%%%%%%%%%%%

The proofs of conditions (i) and (ii) are similar. Given a line $\ell \in L * L$, recall that $\Ps(\ell)$ denotes the set of pairs $(\ell_1,\ell_2) \in L \times L$ such that $\ell_1*\ell_2 = \ell$. If $X \subset L*L$, define $\Ps(X) := \bigcup_{\ell \in X} \Ps(\ell)$.

\subsection{Large Families of Parallel Lines}

\begin{proof}[Proof of Theorem \ref{thm:4cases}(i).]
	Suppose there is a family $P \subseteq L*L$ of parallel lines with 
	\[ \size{P} > \frac{2\size{L*L}}{\size{L}}n^{2\delta} \geq \frac{\size{L}}{2^i}, \]
where $i$ is the maximal index between $0$ and $\ceil{\log_2\size{L}}$ such that 
	\[ \#\set{ \ell \in L*L }{ 2^{i-1} < \size{\Ps(\ell)} \leq 2^i } \geq \frac{\size{L}^2 n^{-2\delta}}{2\log_2\size{L}}. \]
Then the total number of pairs mapping to lines in $P$ under $*$ will be 
	\[ \size{\Ps(P)} = \sum_{\ell \in P} \size{\Ps(\ell)} > \size{P} 2^i > \size{L}. \]
Since the lines of $L$ have distinct slopes, it follows that there exist two distinct pairs $(\lambda x+b,\lambda'x + b')$ and $(\lambda x+b,\lambda''x+b''')$ which each map to some line in $P$. But then $\lambda' = \lambda''$, and by the distinctness of the pairs it follows that $b' \neq b'''$. Thus, two lines in $L$ are parallel, contradicting the hypothesis that they are in near-general position.
\end{proof}

\subsection{Large Star Families}

\begin{proof}[Proof of Theorem \ref{thm:4cases}(ii).]
	It is sufficient to consider the case that the lines in $S$ intersect on the $y$-axis. If not, suppose the center of $S$ is $(x_0,y_0)$, and consider the grid $A' \times A'$, where $A'$ is the translate $A - x_0$. Suppose the pair $(\ell_1,\ell_2)$ of $n^{1-\delta}$-rich lines in $A \times A$ maps to a line in $S$, where $\ell_1: y = \lambda_1 x + b_1$ and $\ell_2: y = \lambda_2 x + b_2$. Then 
	\[ \ell_1 * \ell_2: y = \frac{\lambda_2}{\lambda_1} x + \frac{b_2-b_1}{\lambda_1} \]
contains the point $(x_0,y_0)$. Let $\ell_1',\ell_2'$ be the translates of $\ell_1,\ell_2$ down by $x_0$ and left by $x_0$; then 
	\[ \ell_1': y = \lambda_1 x + \lambda_1 x_0 + b_1 - x_0 \qaq \ell_2': y = \lambda_2 x + \lambda_2 x_0 + b_2 - x_0. \]
So $(\ell_1',\ell_2')$ maps to 
	\[ \ell_1' * \ell_2': y = \frac{\lambda_2}{\lambda_1} x + \frac{(\lambda_2-\lambda_1) x_0 + b_2 - b_1}{\lambda_1}. \]
At $x = 0$, we have 
	\[ y = \frac{\lambda_2}{\lambda_1}x_0 + \frac{b_2-b_1}{\lambda_1} - x_0 = y_0 - x_0. \]
That is to say, $(\ell_1',\ell_2')$ maps to a rich line passing through the point $(0,y_0-x_0)$. Thus, given a star family of lines $\frac{1}{2}n^{1-2\delta}$-rich in $A \times A$, we can construct a new $\frac{1}{2}n^{1-2\delta}$-rich star family of the same size in the translated grid $A' \times A'$ whose center lies on the $y$-axis.

	Now suppose there is a star family $S \subseteq L*L$ centered at $(0,y_0)$ with 
	\[ \size{S} > 2C\frac{\size{L*L}}{\size{L}}n^{2\delta} \geq C\frac{\size{L}}{2^i} \]
(where $i$ is taken as in the previous proof). Then the total number of preimages for lines in $S$ will be 
	\[ \size{\Ps(S)} = \sum_{\ell \in S} \size{\Ps(\ell)} > \size{S} 2^i > C\size{L}. \]
Since the lines of $L$ have distinct slopes, it follows that there exist $C+1$ distinct pairs in $L \times L$ mapping to $S$ such that the lines in the first coordinate of the pairs are the same:
	\[ (\lambda x+b,\lambda_1 x + b_1), (\lambda x+b,\lambda_1 x + b_2), ..., (\lambda x+b,\lambda_{C+1} x + b_{C+1}). \]
Since the $y$-intercepts of the output lines are the same, it follows that
	\[ \frac{b_1-b}{\lambda} = \frac{b_2-b}{\lambda} = \cdots = \frac{b_{C+1}-b}{\lambda} = y_0; \]
in other words, $b_1 = b_2 = \cdots = b_{C+1} = \lambda y_0 + b$. Thus, $L$ contains $C+1$ lines with a common $y$-intercept, contradicting the hypothesis that $L$ is in near-general position with star families bounded in size by $C$.
\end{proof}

%%%%%%%%%%%%%%%%%%%%%%%%%%%%%%%%%%%%%%%%%%%%%%%%%%%%%%%%%%%%%%%%%%%%%%%%%%
%%% SUM-PRODUCT INEQUALITY %%%%%%%%%%%%%%%%%%%%%%%%%%%%%%%%%%%%%%%%%%%%%%%
%%%%%%%%%%%%%%%%%%%%%%%%%%%%%%%%%%%%%%%%%%%%%%%%%%%%%%%%%%%%%%%%%%%%%%%%%%

\section{Star Families of Moderate Size}

Moving towards a proof of case (iv) of \cref{thm:4cases}, we begin with a technical combinatorial result.

\begin{lemma}
	\label{lemma:growth}
	For every $c > 0$ there exist $\alpha > 0$ and $k \geq 1$ such that the following holds for all sufficiently large $M$: suppose that $A_1,A_2,...,A_k \subseteq \R$ with $\size{A_i} = M$ % this was originally \size{A_1} = M
 and $\size{A_i.A_i} \leq M^{1+\alpha}$ for all $i = 1,...,k$. If $S \subseteq A_1 \times A_2 \times \cdots \times A_k$ has size $\size{S} \geq M^{k-\alpha}$, then $\size{\Sigma(S)} \geq M^c$.
\end{lemma}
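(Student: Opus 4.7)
The plan is a proof by contradiction that combines Borenstein--Croot (\cref{thm:borcroBSG}) with Croot--Hart (\cref{thm:crohar}). Suppose toward contradiction that $\size{\Sigma(S)} < M^c$. Glue the sets into $A := A_1 \cup \cdots \cup A_k$, so $\size{A} \leq kM$ and $S \subseteq A^k$. For $M$ large relative to $k$ and $\alpha$, the hypothesis $\size{S} \geq M^{k-\alpha}$ still gives $\size{S} \geq \size{A}^{k-2\alpha}$, and $\size{\Sigma(S)} < M^c \leq \size{A}^{c_1}$ where I set $c_1 := \max(c, 2)$.

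I first apply \cref{thm:borcroBSG} to $A$ and $S$ with parameters $(c_1, \eps)$ for some small $\eps > 0$ to be fixed later. This produces a subset $A' \subseteq A$ with $\size{A'} \geq \size{A}^{1-\eps}$ satisfying $\size{\ell A'} \leq \size{A'}^{c_1(1+\eps\ell)}$ for every $\ell \geq 1$. Pigeonholing $A'$ across the $k$ pieces, some intersection $B := A' \cap A_i$ has $\size{B} \geq \size{A'}/k \geq M^{1-2\eps}$ for $M$ large. Because $B \subseteq A_i$, the multiplicative bound is inherited: $\size{B.B} \leq \size{A_i.A_i} \leq M^{1+\alpha} \leq \size{B}^{(1+\alpha)/(1-2\eps)}$, and with $\alpha, \eps$ small enough this is at most $\size{B}^{1+\beta}$ for any prescribed $\beta > 0$.

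Now apply \cref{thm:crohar} to $B$ with target exponent $c_2 := 2c_1$. It returns $\beta > 0$ and $m \geq 1$, depending only on $c_2$, and concludes $\size{mB} \geq \size{B}^{c_2} \geq M^{(1-2\eps) c_2}$. On the other hand $mB \subseteq mA'$, so Borenstein--Croot gives $\size{mB} \leq \size{A'}^{c_1(1+\eps m)} \leq (kM)^{c_1(1+\eps m)}$. Comparing exponents, for $M$ large enough to absorb the factor $k^{c_1(1+\eps m)}$, this forces $(1-2\eps)c_2 \leq c_1(1+\eps m)$, i.e., with $c_2 = 2c_1$, $1 \leq (4+m)\eps$; arranging $\eps < 1/(5m)$ from the outset contradicts this, closing the argument.

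The bookkeeping, which is really the only delicate point, proceeds in this order: given $c$, set $c_1 = \max(c, 2)$ and $c_2 = 2c_1$; Croot--Hart then determines $\beta$ and $m$; pick $\eps \in (0, 1/(5m))$ small enough to also satisfy $(1+\alpha)/(1-2\eps) \leq 1+\beta$ for the Croot--Hart hypothesis on $B$; Borenstein--Croot with $(c_1, \eps)$ then yields $\delta_0 > 0$ and a threshold $k_0$; finally take $k \geq k_0$ and $0 < \alpha < \min(\delta_0/2, \beta/2)$. Every substantive inequality is handed to us by one of the two cited theorems; the ``hard part'' is only the careful ordering of quantifiers so that $c_2$ is large enough to beat the Borenstein--Croot exponent while $\eps$ is small enough relative to $m(c_2)$ for the final exponent comparison to strictly go the wrong way.
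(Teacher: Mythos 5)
Your proposal is correct and takes essentially the same route as the paper: contradiction, glue $A = A_1 \cup \cdots \cup A_k$, apply Borenstein--Croot to get $A'$ with slow iterated-sumset growth, pigeonhole into some $A_i$ to inherit a small product set, then apply Croot--Hart and compare exponents to close the contradiction. The paper leaves the final exponent comparison as a terse remark, while you carry it out explicitly (choosing $c_1 = \max(c,2)$ to satisfy the $c>1$ hypothesis of Borenstein--Croot and $c_2 = 2c_1$ to leave slack for small $\eps$); this is a welcome clarification rather than a different argument, though the phrasing of the $\eps$--$\alpha$ dependency could be tightened by saying explicitly that one plans $\alpha < \beta/2$ and then picks $\eps \leq \beta/(4(1+\beta))$ before $\alpha$ is fixed.
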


\begin{proof}
	Let $A_1,...,A_k,S$ be sets as in the statement of the lemma. Suppose that $\size{\Sigma(S)} < M^c$, where we assume that $M$ is large in terms of $k$, $c$, and $\alpha$---say, $M > 2^{2^{c+k+\alpha^{-1}+100}}$.

	Apply Theorem \ref{thm:borcroBSG}, taking $\alpha < \delta/2$, $\eps > 0$ sufficiently small (to be determined later), and $A = A_1 \cup \cdots \cup A_k$. Take $N = \size{A}$, observing that $M \leq N \leq k M$. Note that $S \subset A^k$, 
	\[ \size{S} \geq M^{k-\alpha} > (N/k)^{k-\alpha} > N^{k-\delta}, \quad\text{and}\quad \size{\Sigma(S)} < M^c \leq N^c. \]
Then there is a subset $A' \subseteq A$ with $\size{A'} \geq N^{1-\eps}$ such that for all $\ell \geq 1$, $\size{\ell A'} < N^{c(1+\eps\ell)}$.

	By the pigeonhole principle, $A'$ intersects some $A_j$, $1 \leq j \leq k$, in a set of size at least $\size{A'}/k > N^{1-\eps}/k$. Let $A''$ be that intersection, and note that this set satisfies the following:
	\[ \size{A''} > N^{1-\eps}/k, \quad \size{A''.A''} \leq \size{A_j.A_j} \leq M^{1+\alpha}, \quad\text{and}\quad \size{\ell A''} < N^{c(1+\eps\ell)}. \]
Expressing all of this in terms of $N' = \size{A''}$, we get 
	\[ \size{A''.A''} \leq M^{1+\alpha} \leq (k N')^{(1+\alpha)/(1-\eps)} \quad\text{and}\quad \size{\ell A''} < (k N')^{c(1+\eps\ell)/(1-\eps)}. \]
For $k \geq 1$, if $\ell$ is sufficiently large in terms of $c$, and if $\alpha$ and $\eps$ are sufficiently small in terms of $\ell$ and $k$, then these inequalities contradict Theorem \ref{thm:crohar}.
\end{proof}

%%%%%%%%%%%%%%%%%%%%%%%%%%%%%%%%%%%%%%%%%%%%%%%%%%%%%%%%%%%%%%%%%%%%%%%%%%
%%% (2^t+1)-PARTITE GRAPHS AND EXPRESSIONS %%%%%%%%%%%%%%%%%%%%%%%%%%%%%%%
%%%%%%%%%%%%%%%%%%%%%%%%%%%%%%%%%%%%%%%%%%%%%%%%%%%%%%%%%%%%%%%%%%%%%%%%%%

From this lemma we prove a corollary which will have a direct application to the proof of condition (iv) of Theorem \ref{thm:4cases}.

\begin{lemma}
	\label{lem:graphs}
There is an absolute constant $c > 0$ such that for every $\eps > 0$ there exists $0 < \alpha_0 < \eps$ such that for all $0 < \alpha < \alpha_0$, there exists $0 < \delta_0 < \alpha$ such that the following holds for all $0 < \delta < \delta_0$ and sufficiently large $n = n(\eps,\alpha,\delta)$:

If $\seq{C_1,...,C_k}$ is a collection of sets of real numbers such that for all $i$, $\size{C_i} \geq n^\alpha$ and $\size{C_i.C_i} \leq \size{C_i}^{1+c_1\delta}$, $B$ is a set of real numbers with $\size{B} \geq n^\eps$, and $x_1,...,x_k \in \R$ are distinct constants such that for all $i$ and for each $\lambda \in C_i$, there are at least $\size{B}^{1-c\delta}$ pairs $(b,b') \in B \times B$ satisfying $\lambda(b - x_i) = b' - x_i$, then $k < n^{\alpha-c\delta}$.
\end{lemma}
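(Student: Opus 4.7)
The plan is to argue by contradiction via \cref{lemma:growth}. Assume $k \geq n^{\alpha - c\delta}$. Apply \cref{lemma:growth} with an auxiliary constant $c' > 0$ (to be chosen sufficiently large in terms of $\alpha$) to obtain parameters $\alpha_0^{\star} > 0$ and an integer $k_0 \geq 1$. Fix the constant $c_1$ in the hypothesis so that $c_1\delta \leq \alpha_0^{\star}$; this guarantees that each $C_i$ satisfies $|C_i\cdot C_i| \leq |C_i|^{1+\alpha_0^{\star}}$, which is the doubling hypothesis required by \cref{lemma:growth}. Since $k \geq n^{\alpha - c\delta}$ greatly exceeds the constant $k_0$ for large $n$, select any $k_0$ indices $i_1,\ldots,i_{k_0}$ and write $A_j := C_{i_j}$, $p_j := x_{i_j}$, $M := n^\alpha$.

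To build a large subset $S \subseteq A_1 \times \cdots \times A_{k_0}$ whose sumset is small, exploit the richness hypothesis: for each $j$,
\[
\sum_{\lambda \in A_j} \#\{b \in B : \lambda(b - p_j) + p_j \in B\} \;\geq\; |A_j|\cdot |B|^{1-c\delta}.
\]
Averaging over $b \in B$ and pigeonholing yields some $b_j^{\star} \in B$ for which
\[
A_j' := \{\lambda \in A_j : \lambda(b_j^{\star} - p_j) + p_j \in B\}
\]
has cardinality at least $M\,|B|^{-c\delta}$. Set $S := A_1' \times \cdots \times A_{k_0}'$. Then $|S| \geq M^{k_0}\,|B|^{-k_0 c\delta}$, and taking $\delta$ sufficiently small (explicitly $\delta < \alpha\alpha_0^{\star}/(k_0 c)$, using the implicit bound $|B| \leq n^{O(1)}$ from the application context) gives $|S| \geq M^{k_0 - \alpha_0^{\star}}$, fulfilling the size hypothesis of \cref{lemma:growth}. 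Now every $\lambda_j \in A_j'$ has the form $\lambda_j = (b_j' - p_j)/(b_j^{\star} - p_j)$ for some $b_j' \in B$, so
\[
\sum_{j=1}^{k_0} \lambda_j \;\in\; \sum_{j=1}^{k_0} \frac{1}{b_j^{\star} - p_j}\cdot B \;+\; \textrm{constant},
\]
which yields the trivial bound $|\Sigma(S)| \leq |B|^{k_0}$. Choosing $c'$ so that $|B|^{k_0(c')} < M^{c'}$ then contradicts \cref{lemma:growth}, completing the proof.

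The hard part of this plan is the parameter balancing required to enforce $|\Sigma(S)| < M^{c'}$ simultaneously with $|S| \geq M^{k_0 - \alpha_0^{\star}}$. The bound $|\Sigma(S)| \leq |B|^{k_0}$ is essentially the trivial product-of-dilates bound and is tight for generic choices of $b_j^{\star}$, so the argument ultimately hinges on selecting $c'$ large enough relative to $k_0(c')/\alpha$; this in turn relies on the growth of $k_0(c')$ through \cref{lemma:growth} (and thus through \cref{thm:crohar}) being slow enough that $k_0(c')/c' < \alpha$ for some $c'$. A secondary, but necessary, ingredient is the implicit polynomial upper bound on $|B|$, which is not stated in the lemma but is inherited from the intended application to part~(iv) of \cref{thm:4cases}, where $B$ arises as a subset of the $n$-element coordinate set of a grid.
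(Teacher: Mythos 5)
Your plan builds $S$ as a full Cartesian product $A_1' \times \cdots \times A_{k_0}'$ and then bounds $\Sigma(S)$ by the trivial $|B|^{k_0}$, aiming to contradict \cref{lemma:growth}. This cannot work, for a reason that is independent of the unstated growth rate of $k_0(c')$ in Croot--Hart: in \cref{lemma:growth} (and in \cref{thm:crohar}) one always has $k_0(c') \geq c'$, since $\Sigma(S) \subseteq A_1 + \cdots + A_{k_0}$ has at most $M^{k_0}$ elements and the conclusion demands $|\Sigma(S)| \geq M^{c'}$. Combined with $M = n^{\alpha} \leq n^{\eps} \leq |B|$, this gives
\[
|B|^{k_0} \;\geq\; M^{k_0} \;\geq\; M^{c'},
\]
so the contradiction you need, $|\Sigma(S)| < M^{c'}$, is never reachable from the bound $|\Sigma(S)| \leq |B|^{k_0}$ --- the inequality goes the wrong way by definition. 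No choice of $c'$, $\delta$, or $\alpha$ can save this; the bound is simply too crude.

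The missing idea is the one the paper's proof is built around: instead of bounding $\Sigma(S)$ directly, one iterates the relation $\lambda_j(\beta_j - x_j) = \beta_{j+1} - x_j$ along a long path of a suitably composed bipartite graph to obtain the telescoping identity (\ref{theequation1}), whose left-hand side is a \emph{product} $\lambda_{2^t}\cdots\lambda_1\beta_1$ and whose right-hand side is a \emph{sum} of products of the $\lambda_j$. The product on the left lives (after fixing the $\lambda_j$ with $j \not\equiv 1 \imod 4$ and $\beta_1$) in a dilate of a power of $C_i$, which by Pl\"unnecke--Ruzsa has size only $L^{1+O_t(\delta)}$ because $C_i$ has small product set. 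The sum on the right, re-expressed as $\beta_{2^t+1} - x_{2^t} + \sum_j \kappa_j C_i^{4j}$ with at least $|C_i|^{2^{t-2} - O(\delta)}$ input vectors, is large by \cref{lemma:growth}: at least $L^2$ values. Equating the two sides yields $L^{1+O_t(\delta)} \geq L^2$, a contradiction for small $\delta$. The argument also needs the dyadic pigeonholing to equalize $|C_i|$, the iterated applications of \cref{lemma:lemmaA} to guarantee enough paths, and the non-degeneracy discussion to ensure the $\kappa_j$ are nonzero --- none of which appear in your outline. Without the product/sum dichotomy created by (\ref{theequation1}), there is no mechanism for the small-product-set hypothesis on $C_i$ to enter the argument at all, which is a strong signal the approach is off track.
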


	First we need another lemma:
	
\begin{lemma}
	\label{lemma:lemmaA}
	Suppose that $d_{i,j}$, $i = 1,...,k$ and $j = 1,...,N$ are real numbers satisfying $0 \leq d_{i,j} \leq L$. If $C \geq 0$ is defined by 
	\[ \sum_{i=1}^k \sum_{j=1}^N d_{i,j} = C L k N, \]
then there exists $i \in [k]$ such that for at least $kC^2/(2-C^2)$ indices $i' \in [k]$, we have 
	\begin{equation}
		\label{eq:lemmaA}
		\sum_{j=1}^N d_{i,j} d_{i',j} > \frac{1}{2} C^2 L^2 N.
	\end{equation}
\end{lemma}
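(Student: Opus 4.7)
The plan is to prove this by a Cauchy--Schwarz step followed by a pigeonhole/averaging step; it is a classical second-moment argument and I do not foresee a substantive obstacle. I would introduce the column sums $X_j := \sum_{i=1}^k d_{i,j}$ and the pairwise overlaps $S_{i,i'} := \sum_{j=1}^N d_{i,j} d_{i',j}$. The hypothesis rewrites as $\sum_{j=1}^N X_j = CLkN$.

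The first step extracts a lower bound on the total overlap $\sum_{i,i'} S_{i,i'}$. Expanding the square and applying Cauchy--Schwarz to $X_1, \ldots, X_N$ gives
\[ \sum_{i,i'=1}^k S_{i,i'} \;=\; \sum_{j=1}^N X_j^2 \;\geq\; \frac{1}{N}\Bigl(\sum_{j=1}^N X_j\Bigr)^2 \;=\; C^2 L^2 k^2 N. \]
Pigeonhole on the outer index then produces an $i^* \in [k]$ for which $\sum_{i'=1}^k S_{i^*, i'} \geq C^2 L^2 k N$.

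The second step is a weighted averaging argument on this distinguished row. Partition $[k] = G \cup B$ where $G := \set{i' \in [k]}{ S_{i^*, i'} > \frac{1}{2} C^2 L^2 N }$. For $i' \in G$, I would use the trivial upper bound $S_{i^*, i'} \leq L \sum_j d_{i', j} \leq L^2 N$ (from $d_{i^*, j}, d_{i', j} \leq L$); for $i' \in B$, I would use the defining inequality $S_{i^*, i'} \leq \frac{1}{2} C^2 L^2 N$. Combining these,
\[ C^2 L^2 k N \;\leq\; \size{G}\cdot L^2 N \;+\; (k - \size{G})\cdot \frac{1}{2} C^2 L^2 N, \]
which rearranges to $\size{G}\bigl(1 - \frac{1}{2} C^2\bigr) \geq \frac{1}{2} C^2 k$, i.e.\ $\size{G} \geq kC^2/(2 - C^2)$. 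Taking $i := i^*$ then satisfies the conclusion of the lemma; note that $C \leq 1$ since each $d_{i,j}/L \leq 1$, so the denominator $2 - C^2$ is strictly positive.

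The only (minor) subtlety is that the specific constant $2 - C^2$ in the conclusion is produced by the ratio between the trivial per-entry bound $L^2 N$ on $S_{i^*, i'}$ and the threshold $\frac{1}{2} C^2 L^2 N$ defining $G$; no finer information about individual row sums $f_i := \sum_j d_{i,j}$ is available from the hypotheses, nor is it needed.
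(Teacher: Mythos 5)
Your proof is correct and matches the paper's argument step for step: the same Cauchy--Schwarz bound on $\sum_j X_j^2$, the same pigeonhole to select a good row $i^*$, and the same threshold/averaging estimate using the trivial bound $S_{i^*,i'} \leq L^2 N$ against the defining bound $\frac{1}{2}C^2 L^2 N$. The only difference is cosmetic (you count the good set $G$ directly, the paper counts the bad set $T = k - |G|$), and your remark that $C \leq 1$ keeps $2 - C^2 > 0$ is a harmless clarification the paper leaves implicit.
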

\begin{proof}[Proof of Lemma.]
	By the Cauchy-Schwarz inequality, 
	\[ \sum_{1\leq i,i' \leq k} \sum_{j=1}^N d_{i,j}d_{i',j} = \sum_{j=1}^N \left( \sum_{i=1}^k d_{i,j} \right)^2 \geq \frac{1}{N} \left( \sum_{j=1}^N \sum_{i=1}^k d_{i,j} \right)^2 = C^2 L^2 k^2 N. \]
In particular, there must exist some $i \in [k]$ such that 
	\[ \sum_{i'=1}^k \sum_{j=1}^N d_{i,j}d_{i',j} \geq C^2 L^2 k N. \]
Fixing such an $i$, let $T$ denote the number of indices $i' \in [k]$ for which 
	\[ \sum_{j=1}^N d_{i,j}d_{i',j} \leq \frac{1}{2} C^2 L^2 N. \]
Then 
	\[ \frac{1}{2} T C^2 L^2 N + (k - T) L^2 N \geq C^2 L^2 k N, \]
so 
	\[ T \leq k \frac{1-C^2}{1-C^2/2}. \]
Thus, for at least 
	\[ k - T \geq \frac{k C^2}{2-C^2} \]
indices $i' \in [k]$, (\ref{eq:lemmaA}) holds.
\end{proof}

\begin{proof}[Proof of \cref{lem:graphs}.]
		By a dyadic pigeonhole argument, there exists a subcollection of the set $\{C_1,...,C_{k'}\}$ with size $\frac{k'}{\log_2(n)}$ and an integer $L \geq n^\alpha$ such that $L \leq \size{C_i} \leq 2L$ for each $C_i$ in the subcollection. Let $k$ be the number of elements in this subcollection, and reindex so that $C_1,...,C_k$ are the sets making up the subcollection.
	
	For each $i = 1,...,k$, construct the directed bipartite graph $G_i$ on vertex set $B_1 \sqcup B_2$ where $B_1 = B_2 = B$ and where $(b,b')$ is an edge if there exists $\lambda \in C_i$ such that $\lambda(b - x_i) = b' - x_i$. Letting $N = \size{B}$, the sum of the out-degrees in $B_1$ (and the sum of the in-degrees in $B_2$) is at least $LN^{1-O(\delta)}$ by our hypotheses on the size of $C_i$.
	
	If $G$ is a directed graph, define $\tilde{G}$ to be the graph obtained by reversing the orientation of each of $G$'s edges.
	
	If $G$ and $G'$ are two $(2^t+1)$-partite directed graphs whose vertex sets are $B_1 \sqcup \cdots \sqcup B_{2^t + 1}$, where $B_1 = \cdots = B_{2^t+1} = B$, then define the $(2^{t+1}+1)$-partite directed graph $G \wedge G'$ as follows: Let $V = B_1 \sqcup \cdots \sqcup B_{2^{t+1} + 1}$. For $m = 1,...,2^t$, let $(b_j,b_{j'}) \in B_m \times B_{m+1}$ be an edge in $G \wedge G'$ if and only if $(b_j,b_{j'}) \in B_m \times B_{m+1}$ is an edge in $G$. For $m = 2^t+1,...,2^{t+1}$, let $(b_j,b_{j'}) \in B_m \times B_{m+1}$ be an edge in $G \wedge G'$ if and only if $(b_j,b_{j'}) \in B_{m-2^t} \times B_{m+1-2^t}$ is an edge in $G'$.

Define $G_{i_1,i_2}$ to be $G_{i_1} \wedge \tilde{G}_{i_2}$. By \cref{lemma:lemmaA} (taking $C = N^{-O(\delta)}$) there is an index $1 \leq i_1 \leq k$ such that for at least $kN^{-O(\delta)}$ indices $1 \leq i_2 \leq k$, 
	\[ \sum_{j=1}^N d_{i_1,j} d_{i_2,j} \geq L^2 N^{1-O(\delta)}, \]
where $d_{i,j}$ is the number of directed edges in $G_i$ terminating at $b_j \in B_2$. This sum then counts the total number of paths of length $2$ in $G_{i_1,i_2}$, so the average number of paths in $G_{i_1,i_2}$ terminating at a particular $b \in B$ is at least $L^2 N^{-O(\delta)}$.

Now, fixing $i_1,...,i_{t-1}$, we can apply \cref{lemma:lemmaA} again to form a $(2^t + 1)$-partite graph 
	\[ G_{i_1,...,i_{t+1}} = G_{i_1,...,i_{t-1},i_t} \wedge \tilde{G}_{i_1,...,i_{t-1},i_{t+1}} \]
with $L^{2^t} N^{1-O(\delta)}$ length-$2^t$ paths corresponding to $2^t$-tuples $(\lambda_1,...,\lambda_{2^t})$ such that $\lambda_{4m+1} \in C_{i_1}$ and $\lambda_{4m} \in C_{i_1}^{-1}$ for all $m$.

Using the fact that 
	\[ \lambda_j(\beta_j - x_j) = \beta_{j+1} - x_j, \]
or, equivalently, 
	\[ \lambda_j \beta_j = \beta_{j+1} - x_j + \lambda_j x_j, \]
each of these $2^t$-tuples corresponds to an expression of the form:
	\begin{align*}
		\lambda_{2^t} \cdots \lambda_1 \beta_1 
		&= \lambda_{2^t} \cdots \lambda_2 (\beta_2 - x_1 + \lambda_1 x_1) \\
		&= \lambda_{2^t} \cdots \lambda_2 \beta_2 - \lambda_{2^t} \cdots \lambda_2 x_1 + \lambda_{2^t} \cdots \lambda_1 x_1 \\
		&\vdots \\
		&= (\beta_{2^t+1} - x_{2^t}) + \sum_{y=1}^{2^t} \left[ \prod_{j=y}^{2^t} \lambda_j \right] (x_y - x_{y-1}), 
	\end{align*}
where we define $x_0 = 0$.

By the pigeonhole principle, there exists a choice of $\beta_1$ and of the variables $\lambda_j$, $j \not\equiv 1 \imod{4}$, for which there are at least $L^{2^{t-2}} N^{-O(\delta)}$ paths in the $(2^t+1)$-partite graph starting at $\beta_1$ and utilizing the edges specified by the selected $\lambda_j$ (leaving at least $L^{2^{t-2}} N^{-O(\delta)}$ free choices of edges). Fixing such a $\beta_1$ and the variables $\lambda_j$ except $\lambda_{4s-3} \in C_i$ for all $1 \leq s \leq 2^{t-2}$, the left-hand side of the equality 
\begin{equation} \label{theequation1}
	\lambda_{2^t} \cdots \lambda_1 \beta_1 = (\beta_{2^t+1} - x_{2^t}) + \sum_{y=1}^{2^t} \left[ \prod_{j=y}^{2^t} \lambda_j \right] (x_y - x_{y-1}), 
\end{equation}
is an expression contained in the set 
	\[ C_i^{2^{t-2}} \cdot \beta_1 \cdot \prod_{\substack{ 1 \leq j \leq 2^t \\ j \not\equiv 1 \imod{4} }} \lambda_j, \]
which by \cref{thm:pluruz} has size at most $\size{C_i}^{1+O_t(\delta)}$.  Now we rewrite the right-hand-side, by grouping the terms indexed by $y$ into groups of four, starting at $y=4r+2$:  a typical such group will have sum
\begin{eqnarray} \label{theequation2}
&&\lambda_{4r+5} \lambda_{4r+6} \cdots \lambda_{2^t}\Bigl (\lambda_{4r+2} \lambda_{4r+3}\lambda_{4r+4}(x_{4r+2} - x_{4r+1}) \nonumber \\
&& \hskip0.5in + \lambda_{4r+3}\lambda_{4r+4} (x_{4r+3} - x_{4r+2}) \nonumber \\
&& \hskip0.5in + \lambda_{4r+4}(x_{4r+4} - x_{4r+3}) + x_{4r+5} - x_{4r+4}\Bigr ).
\end{eqnarray}
Conveniently, all the terms in the parentheses involve products of $\lambda_j$s, where $j \not \equiv 1 \pmod{4}$.  We can assume here that all of $x_{4r+1}, x_{4r+2}, x_{4r+3}, x_{4r+4}$ are distinct (the number of expressions resulting in duplications is small compared to the total number); and so, we have that $x_{4r+2} - x_{4r+1}$, $x_{4r+3} - x_{4r+2}$, $x_{4r+4} - x_{4r+3}$, and $x_{4r+5} - x_{4r+4}$ are all non-zero.  It follows, therefore, that for any choice of two of the parameters among $\lambda_{4r+2}, \lambda_{4r+3}, \lambda_{4r+4}$, there can be at most one possible choice of the remaining parameter that can make the expression (\ref{theequation2}) equal to $0$.  In fact, the number of paths through the graph resulting in a selection of the $\lambda_j$'s where at least one of the $2^{t-2}$ four-tuples equals $0$ is at most $L^{2^t-1}N^{1-O(\delta)}$.  But since there are many more paths than this, we can assume that there is a choice for the $\lambda_j$, $j \not \equiv 1 \pmod{4}$ where all the four-tuples are non-zero.  Re-expressing the right-hand-side of (\ref{theequation1}) in terms of these four-tuples, for this fixed choice of $\lambda_j$, $j \not \equiv 1 \pmod{4}$, we find that it is contained in the set 
	\[ \beta_{2^t+1} - x_{2^t} + \sum_{j=1}^{2^{t-2}} \kappa_j C_i^{4j}, \]
where $\kappa_j \neq 0$ are constants.  Furthermore, it turns out that for at least $|C_i|^{2^{t-2}-O(\delta)}$ vectors $(c_1,c_2,...,c_{2^{t-2}}) \in C_i \times C_i^2 \times \cdots \times C_i^{2^{t-2}}$, this expression is among the expressions in the right-hand-side of (\ref{theequation1}) that we can produce by Ruzsa-Plunnecke (since $|C_i.C_i| \approx |C_i|$).  Applying \cref{lemma:growth}, then, we quickly find that for $t$ large enough, the number of right-hand-side expressions exceeds $L^2$.  This contradicts the fact that the numberof left-hand side expressions is bounded by $L^{1+O_t(\delta)}$. This contradiction finishes the proof.
\end{proof}

Now, we may finally establish that if $L$ is in near-general position, then $L*L$ does not contain too many star families of ``moderate'' size.

\begin{proof}[Proof of \cref{thm:4cases}(iv).]
Suppose that there exist $k = n^\alpha$ star families $S_1,...,S_k \subseteq L*L$ with $\size{S_i} \geq n^\alpha$ for all $i$. Under this assumption, we will construct sets $B,C_1,...,C_k$ and distinct constants $x_1,...,x_k \in \R$ such that $\size{C_i} \geq n^\alpha$, such that $\size{C_i.C_i} \leq \size{C_i}^{1+O(\delta')}$ (for $\delta' \ll \alpha$) and such that for all $i$ and for all $\lambda \in C_i$, we have $\size{B}^{1-O(\delta')}$ pairs $(b,b') \in B \times B$ such that $\lambda(b-x_i) = b'-x_i$. This construction is forbidden by \cref{lem:graphs}, giving us a contradiction.

% Proving that the kappas from lem:graphs are distinct
Begin by taking the $x_i$ to be the $x$-coordinates of the centers of the star families $S_1,...,S_k$ in $L*L$. Our first difficulty will be to show that the $x_i$ are distinct. Indeed, this may not be the case, for it is possible that many of the star families lie on common vertical lines. However, suppose that there are $K$ distinct vertical lines on which there are star families. Then there is some such line with at least $n^\alpha/K$ star families on it. Now, since a line $\ell: y = \lambda x + b$ is in $L*L$ if and only if its inverse $\ell^{-1}: x = \lambda y + b$ is also in $L*L$, it follows that there is a horizontal line with $n^\alpha/K$ star families on it, implying there are at least that many distinct vertical lines. Hence, $K \geq n^\alpha/K$, so $K \geq n^{\alpha/2}$. By choosing one star family from each vertical line and ignoring the rest (and reducing $\alpha$ to $\alpha/2$) we attain distinct $x$-coordinates for the centers of the star families.

% Showing that the growth of the set of slopes eventually stalls
Now, fix a star family $S_i$, and let $\Lambda_i$ be the set of slopes of the lines in $S_i$. Observe that lines in $S_i^{*2} := S_i * S_i$ will have slopes in the ratio set $Q_i := \Lambda_i/\Lambda_i$, lines in $S_i^{*3} := S_i^{*2} * S_i^{*2}$ will have slopes in $Q_i^2 = (\Lambda_i/\Lambda_i)^2$, and (in general) lines in $S_i^{*(j+2)}$, $j \geq 0$, will have slopes in the set $Q_i^{2^j}$. Now, not all elements of $Q_i^{2^j}$ will be slopes of lines in $S_i^{*(j+2)}$ (because some combined lines will not be rich enough in $A \times A$). Let $M_{i,j} \subset Q_i^{2^j}$ be the set of slopes of lines in $S_i^{*(j+2)}$.

Observe that $S_i * S_i$ is itself a star family centered at $(x_i,x_i)$, and $S_i * S_i$ contains the line $y = x$. Therefore, $S_i^{*j} \subseteq S_i^{*(j+1)}$ for all $j \geq 2$; hence $M_{i,j-1} \subseteq M_{i,j}$ for all $j \geq 1$. Moreover, since $y=x$ is in $S_i * S_i$, $M_{i,j}$ is closed under taking reciprocals for all $j$. Further note that lines in $S_i^{*j}$ will be $n^{1-2^{O(j)}\delta}$-rich in $A \times A$.

Let $\delta' > 0$ be a parameter such that $0 < \delta < \delta' < \alpha$. Suppose $\size{M_{i,j+1}} \geq \size{M_{i,j}}n^{\delta'\alpha}$ for all $j$ up to $m = \floor{2/\delta'\alpha}$.
Redefining $\delta$ if necessary, we may take $1 - 5^m \delta > 0$. For sufficiently large $n$, we then have
	\[ \size{M_{i,m+1}} \geq n^{\alpha+m\delta'\alpha} \geq n^2. \]
But, since each element of $M_{i,m+1}$ corresponds to a distinct rich line in $A \times A$, this violates \cref{thm:ST}. Therefore there exists some $j = j(i) < 2/\delta'\alpha$ such that 
	\[ \size{M_{i,j+1}} < \size{M_{i,j}}n^{\delta'\alpha} \leq \size{M_{i,j}}^{1+\delta'}. \]
Therefore, by \cref{lemma:easy1.5}, there are at least $\size{M_{i,j}}^{2-O(\delta')}$ pairs $(m,m') \in M_{i,j} \times M_{i,j}$ such that $m/m' \in M_{i,j+1}$. So the multiplicative energy of $M_{i,j}$ satisfies
	\[ E(M_{i,j},M_{i,j}) \geq \size{M_{i,j}}^{3-O(\delta')}. \]
By \cref{thm:BSG}, we conclude there is a subset $M_i' \subseteq M_{i,j}$ with small multiplicative doubling: $\size{M_i'.M_i'} \leq \size{M_i'}^{1+O(\delta')}$. Let $S_i'$ be those lines $\ell \in S_i^{*(j+2)}$ such that the slope of $\ell$ is in $M_i'$.

Now, let $\ell: \lambda x + (x_i - \lambda x_i) = \lambda(x - x_i) + x_i$ be a line in the stable star family $S_i'$. Since this line is rich in $A \times A$, there are $\size{A}n^{-O(\delta')}$ pairs $(a,a') \in A \times A$ such that $\lambda(a-x_i) = a' - x_i$. Taking $C_i = M_i'$ and $B = A$, this is the forbidden construction we desired above.
\end{proof}

%%%%%%%%%%%%%%%%%%%%%%%%%%%%%%%%%%%%%%%%%%%%%%%%%%%%%%%%%%%%%%%%%%%%%%%%%%
%%% CONSTRUCTING AN ALMOST-GP SUBSET %%%%%%%%%%%%%%%%%%%%%%%%%%%%%%%%%%%%%
%%%%%%%%%%%%%%%%%%%%%%%%%%%%%%%%%%%%%%%%%%%%%%%%%%%%%%%%%%%%%%%%%%%%%%%%%%
\section{Constructing a Near-General Position Set of Lines}

Using Theorem \ref{thm:4cases}, we can extract a subset of lines in $L*L$ which is in near-general position: the subset will contain no two parallel lines, and all star families in the subset have size bounded by a constant $C = C(\eps,\alpha)$ independent of $n$.

\begin{corollary}
	\label{cor:almostgp}
	For all $0 < \eps < 1$ there exists $0 < \alpha_0 < \eps$ such that, for all $0 < \alpha < \alpha_0$, there exists $0 < \delta_0 < \alpha$ such that for all $0 < \delta < \delta_0$ and for sufficiently large $n$, the following holds:
	
	Let $A \subseteq \R$ be a finite set with $\size{A} = n$, and let $L$ be a set of at least $n^\eps$ lines which are all $n^{1-\delta}$-rich in $A \times A$. If $L$ contains no parallel lines and all star families in $L$ are bounded above in size by $C = C(\eps,\alpha)$, then there exists a subset $R \subseteq L*L$ such that
	\begin{itemize}
		\item $\size{R} \geq \size{L}n^{-c\alpha}$ for some absolute constant $c$, 
		\item $R$ contains no two lines which are parallel, and 
		\item at most $k = \ceil{\eps/\alpha}$ lines of $R$ pass through any given point of $\R^2$.
	\end{itemize}
\end{corollary}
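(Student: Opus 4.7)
The plan is to obtain $R$ from $L*L$ in three stages: first, trim the lines responsible for the largest parallel classes and star families using \cref{thm:4cases}; second, randomly sparsify the trimmed set with a carefully chosen sampling probability; third, apply an alteration to remove the remaining parallel pairs and oversized stars. The value $k=\ceil{\eps/\alpha}$ should arise naturally from balancing the various losses.

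First, I would use parts (iii) and (iv) of \cref{thm:4cases} to locate the ``bad'' slopes---those $\lambda$ with $\size{P_\lambda}\geq n^\alpha$---and the ``bad'' points---those $p$ with $\size{S_p}\geq n^\alpha$; there are at most $n^\alpha$ of each. Parts (i) and (ii) then bound each such class by $O(\size{L*L}\,n^{2\delta}/\size{L})$, so the total number of lines belonging to a bad parallel class or passing through a bad point is at most $O(n^{\alpha+2\delta}\size{L*L}/\size{L})$. Since $\size{L}\geq n^\eps$ and I may arrange $\alpha+2\delta<\eps$ (e.g.\ by taking $\alpha_0<\eps/3$), for $n$ sufficiently large this amounts to at most half of $\size{L*L}$. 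Removing these lines produces a set $R_0\subseteq L*L$ with $\size{R_0}\geq \size{L*L}/2$ in which every parallel class and every star family has size strictly less than $n^\alpha$.

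Next, I would sample $R'\subseteq R_0$ by retaining each line independently with a probability $q\in(0,1]$ to be chosen. The expected number of parallel pairs in $R'$ is at most $q^2 n^\alpha \size{R_0}/2$ (using $\size{P_\lambda\cap R_0}<n^\alpha$), and the expected number of $(k{+}1)$-element star families is at most $q^{k+1}n^{\alpha(k-1)}\size{R_0}^2/(k+1)!$ (using $\size{S_p\cap R_0}<n^\alpha$ together with the easy moment bound $\sum_p\size{S_p\cap R_0}^2\leq 2\size{R_0}^2$ derived from the fact that two distinct lines meet in at most one point). Taking $q$ of order $(k!/\size{R_0})^{1/k}\,n^{-\alpha(k-1)/k}$ balances both expected defect counts against $q\size{R_0}$, so that deleting one line from each surviving parallel pair and each surviving $(k{+}1)$-star yields a set $R$ with no parallel pair and at most $k$ lines through any point, of size at least a constant multiple of $\size{R_0}^{1-1/k}\,n^{-\alpha(1-1/k)}$.

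Finally, I would verify $\size{R}\geq\size{L}\,n^{-c\alpha}$ by combining this with $\size{R_0}\geq \size{L}\,n^{-O(\delta)}/O(\log n)$ and $1/k\leq\alpha/\eps$. The main obstacle is precisely this balance: the parallel-pair constraint forces $q\leq n^{-\alpha}$, while the star constraint contributes an additional $\size{R_0}^{-1/k}$ factor to $q$. The choice $k=\ceil{\eps/\alpha}$ is calibrated so that $\size{R_0}^{1-1/k}$ remains comparable to $\size{L}$ up to an $n^{O(\alpha)}$ factor, and combined with the $n^{-\alpha(1-1/k)}$ loss this yields the claimed bound $\size{R}\geq \size{L}\,n^{-c\alpha}$ with $c$ an absolute constant.
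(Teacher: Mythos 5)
Your first stage matches the paper's: both use parts (i)--(iv) of \cref{thm:4cases} to strip the oversized parallel classes and star families from $L*L$, leaving a set $R_0$ with $\size{R_0}\geq\size{L*L}/2$ in which every parallel class and star has size $<n^\alpha$. From there your route diverges. The paper next \emph{deterministically} eliminates all parallel pairs by retaining exactly one representative from each remaining parallel class (a one-time factor $n^\alpha$), so its random step has to control only the moderate star families; it then draws a uniformly random subset of \emph{fixed} size $\approx\size{L'}n^{-3\alpha}$ and applies a first-moment/union-bound argument. You instead carry the parallels into the sampling stage, use independent Bernoulli sampling with retention probability $q$, and finish with an alteration (delete one line per surviving parallel pair and per surviving $(k+1)$-star). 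This is a reasonable variant in spirit, and your moment bounds ($\sum_p\size{S_p}^2\leq 2\size{R_0}^2$, each $\size{P_\lambda},\size{S_p}<n^\alpha$) are correct; but it produces two competing constraints on $q$, and the second one causes a genuine problem.

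The gap is in the final accounting, which you flag as the ``main obstacle'' and then assert away. When the $(k{+}1)$-star constraint is binding you get $q\size{R_0}\asymp n^{-\alpha(k-1)/k}\size{R_0}^{1-1/k}$, and you claim $\size{R_0}^{1-1/k}$ is within $n^{O(\alpha)}$ of $\size{L}$ ``by calibration of $k$.'' That is only true when $\size{L}$ is close to $n^\eps$. The hypothesis gives only $\size{L}\geq n^\eps$, and by Szemer\'edi--Trotter $\size{L}$ and hence $\size{R_0}\approx\size{L*L}$ can be as large as $n^{1+O(\delta)}$; in that regime $\size{R_0}$ and $\size{L}$ are comparable, so the loss factor $\size{R_0}^{1/k}$ is of order $n^{1/k}$, and since $1/k=\Theta(\alpha/\eps)$ this is $n^{\Theta(\alpha/\eps)}$, which is \emph{not} $n^{O(\alpha)}$ with an absolute implied constant. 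Your $c$ would therefore have to grow like $1/\eps$, contradicting the corollary's ``absolute constant $c$.'' (Indeed $k=\ceil{\eps/\alpha}$ gives a clean upper bound $\size{L}^{1/k}\leq n^{\alpha}$ only under the extra, unstated assumption $\size{L}\lesssim n^{\eps}$.) To repair this you should either state and use such an upper bound on $\size{L}$ (which is in fact all the paper needs downstream), or remove parallels deterministically first so that the parallel constraint on $q$ disappears and the whole $q\size{R_0}$ accounting can be done with $q$ chosen independently of $\size{R_0}$, as in the paper's fixed-size random subset argument; the point is that then one picks $q=n^{-c\alpha}$ directly and only has to check a first-moment bound, rather than solving for $q$ in terms of $\size{R_0}$.
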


We need a short lemma, which is easily proved by induction.
\begin{lemma}
	\label{lemma:binom}
	Let $k$ be a nonnegative integer and $0 < \gamma < 1$. Then 
	\[ \lim_{x \to \infty} x^{k\gamma} \cdot \frac{\binom{x}{x^{1-\gamma}-k}}{\binom{x}{x^{1-\gamma}}} = 1. \]
\end{lemma}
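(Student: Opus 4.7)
The plan is induction on $k$, as the hint suggests. The base case $k=0$ is immediate since the ratio $\binom{x}{m}/\binom{x}{m}$ equals $1$ and the prefactor $x^{0}$ equals $1$. For the inductive step, set $m = x^{1-\gamma}$ for brevity and split off one factor:
\[ \frac{\binom{x}{m-k}}{\binom{x}{m}} = \frac{\binom{x}{m-k}}{\binom{x}{m-k+1}} \cdot \frac{\binom{x}{m-k+1}}{\binom{x}{m}}. \]

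The first factor is computed directly from the definition of the binomial coefficient: it equals $(m-k+1)/(x-m+k)$. The second factor satisfies, by the inductive hypothesis, $x^{(k-1)\gamma} \cdot \binom{x}{m-k+1}/\binom{x}{m} \to 1$ as $x \to \infty$. Multiplying the product by $x^{k\gamma} = x^{\gamma} \cdot x^{(k-1)\gamma}$ then reduces the claim to showing
\[ x^{\gamma} \cdot \frac{m-k+1}{x-m+k} \longrightarrow 1. \]

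This last limit is transparent: because $k$ is a fixed constant and $m = x^{1-\gamma}$, the numerator is $x^{1-\gamma}(1 + O(x^{-(1-\gamma)}))$ and, since $0 < \gamma < 1$ forces $m = o(x)$, the denominator is $x(1 + o(1))$. Hence $x^{\gamma} \cdot (m-k+1)/(x-m+k)$ tends to $1$, completing the induction.

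I do not anticipate any real obstacle: the assertion is a statement about leading-order asymptotics of a ratio of binomial coefficients with indices differing by a fixed additive constant, and the factor $x^{k\gamma}$ is exactly the right normalization to cancel the asymptotic $(m/(x-m))^k \sim x^{-k\gamma}$. The only thing to be slightly careful about is that the $O(\cdot)$ error terms depend on $k$, but since $k$ is fixed while $x \to \infty$, they vanish harmlessly in the limit. A completely non-inductive alternative would be to write the ratio as the telescoping product $\prod_{j=0}^{k-1}(m-j)/(x-m+j+1)$ and take the limit factor by factor; this gives the same conclusion in one step.
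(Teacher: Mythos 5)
Your proof is correct and matches the paper's intended approach: the paper simply states that the lemma is ``easily proved by induction'' without giving details, and your inductive argument (peeling off one factor $\frac{m-k+1}{x-m+k} \sim x^{-\gamma}$ at each step) is exactly the natural way to carry that out. Your closing remark that the telescoping product $\prod_{j=0}^{k-1}\frac{m-j}{x-m+j+1}$ gives the same conclusion directly is also correct and is really the same computation without the inductive scaffolding.
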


	\begin{proof}[Proof of \cref{cor:almostgp}.]
By Theorem \ref{thm:4cases}(iii), there are at most $n^\alpha$ families of parallel lines in $L*L$ with size greater than $n^\alpha$. By Theorem \ref{thm:4cases}(i), none of these families can have size greater than $2\size{L*L}n^{2\delta}/\size{L}$. Thus, deleting all of these lines from $L*L$ leaves us with a set of at least 
	\[ \size{L*L} - \frac{2\size{L*L}n^{\alpha+2\delta}}{\size{L}} > \frac{1}{2}\size{L*L} \]
lines.

The remaining families of parallel lines in this set have size at most $n^\alpha$, and these families are all disjoint. By picking a single representative from each family, we form a subset of $L*L$ of at least $\frac{1}{2}\size{L*L}n^{-\alpha}$ lines, no two of which are parallel. Invoking Theorem \ref{thm:4cases}(ii) and (iv), we remove from this subset all star families of size greater than $n^\alpha$ to leave us with a subset $L'$ with at least $\frac{1}{4}\size{L*L}n^{-\alpha}$ lines.

By Corollary \ref{cor:easy2}, there are at least $\size{L}n^{-3\delta}$ lines in $L*L$, so $L'$ contains at least $\size{L}n^{-2\alpha}$ lines.

Uniformly at random choose a subset $R \subseteq L'$ of $\ceil{\size{L'}n^{-c\alpha}}$ lines, where $c > 0$ is a parameter to be chosen later. The probability that a star family $S$ in $L'$ contains at least $k$ lines of $R$ is 
	\[ \frac{\binom{\size{S}}{k} \cdot \binom{\size{L'}-k}{\size{R}-k}}{\binom{\size{L'}}{\size{R}}} \leq \frac{n^{k\alpha}}{k!} \cdot  \frac{\binom{\size{L'}}{\size{R}-k}}{\binom{\size{L'}}{\size{R}}}. \]
% useful fact: $\binom{x}{x^{1-\alpha}-j} / \binom{x}{x^{1-\alpha}} \sim x^{-j\alpha}$
Applying \cref{lemma:binom} with $x = \size{L'}$ and $x^{1-\gamma} = \size{R} = x^{1-c\alpha\log_x(n)}$, for large values of $n$ we have 
	\[ \frac{\binom{\size{L'}}{\size{R}-k}}{\binom{\size{L'}}{\size{R}}} = (1+o(1)) \cdot \size{L'}^{-kc\alpha \log_{\size{L'}}(n)} \leq 2n^{-kc\alpha}. \]
Since there are at most $n^{2\eps}$ star families, the expected number of star families with at least $k$ lines of $R$ is bounded by 
	\[ \frac{2}{k!} n^{2\eps+k(1-c)\alpha}. \]
Taking $k = \ceil{\frac{\eps}{\alpha}}$ and $c = 3$ makes this expected value less than $1$, meaning there is some choice of $R$ such that no star family has more than $\ceil{\eps/\alpha}$ lines in $R$.

Thus, $R$ is a near-general position subset of $L'$ (and therefore of $L$) with size at least $\size{L'}n^{-3\alpha} \geq \size{L}n^{-5\alpha}$.
\end{proof}

We remark that the proof still holds if $L*L$ above is replaced by $L'' \subseteq L*L$ so long as $\size{L''} \geq \size{L}n^{-c_0\alpha}$ for some $c_0 > 0$. We will use this modified version in the proof that \cref{thm:weaksoly} implies \cref{thm:soly}.

%%%%%%%%%%%%%%%%%%%%%%%%%%%%%%%%%%%%%%%%%%%%%%%%%%%%%%%%%%%%%%%%%%%%%%%%%%
%%% PROOF OF WEAK CONJECTURE %%%%%%%%%%%%%%%%%%%%%%%%%%%%%%%%%%%%%%%%%%%%%
%%%%%%%%%%%%%%%%%%%%%%%%%%%%%%%%%%%%%%%%%%%%%%%%%%%%%%%%%%%%%%%%%%%%%%%%%%

\section{Proof of Theorem 2}

Using \cref{cor:almostgp}, we are now ready to prove \cref{thm:weaksoly}. A major tool used will be the commutator graph, which we draw from \cite{elekes}.

Let $A \subseteq \R$, let $\delta > 0$, and let $L$ be a set of $n^{1-\delta}$-rich lines in $\R^2$. The \emph{commutator graph} on $L$ is the graph $G = (V,E)$, where 
	\[ V(G) = L*L \cup L^{-1}*L^{-1} \]
(with the minor change that we require minimum richness only $n^{1-5\delta}$ for each line in $L*L$ and $L^{-1}*L^{-1}$) and 
	\[ E(G) = \set{ \{f*g, g^{-1}*f^{-1} \} }{ f,g \in L, f*g \in L*L, g^{-1}*f^{-1} \in L^{-1}*L^{-1} }. \]
We draw attention to the fact that the lines $f*g$ and $g^{-1}*f^{-1}$ have the same slope. Hence, any edge of the commutator graph is between two parallel lines.

\begin{proof}[Proof of \cref{thm:weaksoly}]
Let $\eps > 0$, let $0 < \delta \ll \eps$, and let $A \subset \R$ with $n = \size{A} > 0$. Suppose for a contradiction that $L$ is a set of at least $n^{1-\eps}$ lines, all $n^{1-\delta}$-rich in $A \times A$, and that $L$ is in near-general position with star families bounded in size by a constant $C > 0$ independent of $n$. Consider the commutator graph on $L$.

If $\size{V(G)} \geq n^{1+4\delta}$, then we contradict \cref{thm:ST}, so let us assume that $\size{V(G)} < n^{1+4\delta}$. We claim that $\size{E(G)} \geq n^{2-6\delta}$. If this is true, then there is a vertex with degree at least $\size{E(G)}/\size{V(G)}$, so there is a connected component (corresponding to a set of parallel lines) of size $n^{1-10\delta}$, in contradiction with \cref{thm:4cases}(i). %(Since each edge in $G$ connects a line from $L*L$ and a line from $L^{-1}*L^{-1}$, we either have $n^{1-10\delta}$ lines in $L*L$ if the large-degree vertex is in $L^{-1}*L^{-1}$ or vice versa.)

Let $S(f) = X(f) \times Y(f)$ for each $f \in L$. By applying \cref{lemma:easy1} to the collection of sets $S(f)$, where each set $S(f)$ has size at least $n^{2-2\delta}$, we must have at least $n^{2-4\delta}/2 \geq n^{2-5\delta}$ pairs $S(f),S(g)$ with $\size{S(f) \cap S(g)} \geq n^{2-4\delta}/2 \geq n^{2-5\delta}$. Note that for any sets $A_1,A_2,A_3,A_4$, $(A_1 \times A_3) \cap (A_2 \times A_4) = (A_1 \cap A_2) \times (A_3 \cap A_4)$. Thus, since $\size{S(f) \cap S(g)} \geq n^{2-5\delta}$, we have $\size{X(f) \cap X(g)} \geq n^{1-5\delta}$ and $\size{Y(f) \cap Y(g)} \geq n^{1-5\delta}$. Thus, we have at least $n^{2-5\delta}$ pairs $f,g \in L$ such that $f * g$ and $g^{-1} * f^{-1}$ are each $n^{1-5\delta}$-rich.

Let $f_i, g_i$ denote the lines such that $P_i := \{f_i*g_i, g_i^{-1}*f_i^{-1}\}$ is a pair of $n^{1-5\delta}$-rich lines. Given an index $i$, $f_i$ and $g_i$ intersect at a unique point $(x,y)$; it then follows that $y$ is the unique fixed point of $f_i*g_i$ and $x$ is the unique fixed point of $g_i^{-1}*f_i^{-1}$. Suppose there were $2C+2$ indices $i_1,...,i_{2C+2}$ such that $P_{i_j} = P_{i_k}$ for all $1 \leq j,k \leq 2C+2$. Then there would exist $C+1$ indices $i_{j_1},...,i_{j_{C+1}}$ such that 
	\[ f_{i_{j_1}}*g_{i_{j_1}} = \cdots = f_{i_{C+1}} * g_{i_{C+1}} \quad\text{and}\quad g_{i_{j_1}}^{-1} * f_{i_{j_1}}^{-1} = \cdots = g_{i_{C+1}}^{-1} * f_{i_{C+1}}^{-1}. \]
Since for each $1 \leq k \leq C+1$ there is a unique $(x,y)$ such that $f_{i_{j_k}}*g_{i_{j_k}}(y) = y$ and $g_{i_{j_k}}^{-1}*f_{i_{j_k}}^{-1}(x) = x$, it follows that $f_{i_{j_k}}$ and $g_{i_{j_k}}$ all intersect the point $(x,y)$. Since the $f_{i_{j_k}}*g_{i_{j_k}}$ must all have the same slope and $L$ has no parallel lines, we cannot have that $f_{i_{j_k}} = f_{i_{j_k'}}$ for $k \neq k'$ or else $g_{i_{j_k}} = g_{i_{j_k'}}$ as well, contradicting distinctness of the pairs. Similarly we must have $g_{i_{j_k}} \neq g_{i_{j_k'}}$ for $k \neq k'$. The collection  
	\[ \{f_{i_{j_k}} : 1 \leq k \leq C+1\} \cup \{g_{i_{j_k}} : 1 \leq k \leq C+1\} \]
must therefore contain at least $C+1$ distinct lines (a single line may appear as an $f_{i_j}$ at most once and as a $g_{i_j}$ at most once). But then we have a set of more than $C$ concurrent lines at $(x,y)$, contradicting the hypothesis that $L$ is in almost-general position.

Thus, for each edge $e$, there are at most $2C+2$ pairs $\{f_i \circ g_i^{-1}, g_i^{-1} \circ f_i\}$ equal to $e$, so the total number of edges in $G$ is at least $n^{2-5\delta}/(2C+2) \gg n^{2-6\delta}$, yielding a contradiction with \cref{thm:4cases}(i).
\end{proof}

We remark that taking $\delta < \eps/12$ is sufficient for the proof to go through.

%%%%%%%%%%%%%%%%%%%%%%%%%%%%%%%%%%%%%%%%%%%%%%%%%%%%%%%%%%%%%%%%%%%%%%%%%%
%%% PROOF THAT WEAK IMPLIES FULL %%%%%%%%%%%%%%%%%%%%%%%%%%%%%%%%%%%%%%%%%
%%%%%%%%%%%%%%%%%%%%%%%%%%%%%%%%%%%%%%%%%%%%%%%%%%%%%%%%%%%%%%%%%%%%%%%%%%
\section{Theorem 2 Implies Theorem 1}

For $\ell \in L*L$, recall that $\Ps(\ell)$ is the set of all pairs $(f,g) \in L \times L$ such that $f*g = \ell$.
%If $X \subseteq L*L$, then define 
%	\[ \Ps(X) := \{ \Ps(\ell) : \ell \in X \}. \]

\begin{lemma}
	\label{lem:preimages}
	For all $0 < \eps < 1$, there exists $0 < \alpha_0 < \eps$ such that, for all $0 < \alpha < \alpha_0$, there exists $0 < \delta_0 < \alpha$ such that for all $0 < \delta < \delta_0$ and for sufficiently large $n$, the following holds:

Let $A \subseteq \R$ have size $n$, and let $L$ be a set of at least $n^\eps$ near-general position lines, all of which are $n^{1-\delta}$-rich in $A \times A$. Then there exists a set $L' \subseteq L*L$ such that $L'$ is a set of lines in near-general position, $\size{L'} > \size{L}n^{-5\alpha-4\delta}$, and for all $\ell \in L'$,
		\[ \size{\Ps(\ell)} \geq \frac{\size{L}^2}{2\size{L*L}n^{3\delta}}. \]
\end{lemma}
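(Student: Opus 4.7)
The plan is to observe that this lemma is essentially a repackaging of Corollary \ref{cor:almostgp} together with a preimage bound that falls out directly from the dyadic-pigeonhole construction of $L*L$ given at the end of Section 3.

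First I would establish the preimage lower bound uniformly over $L*L$. Recall that $L*L$ was defined by fixing the maximal dyadic level $i$ for which $L_i := \set{\ell}{2^{i-1} < \size{\Ps(\ell)} \leq 2^i}$ satisfies $N_i := \sum_{\ell \in L_i}\size{\Ps(\ell)} \geq \size{L}^2 n^{-2\delta}/(2\log_2\size{L})$, and by unraveling the definition $L*L = L_i$. Since $N_i \leq 2^i\size{L*L}$, we get $2^i \geq \size{L}^2 n^{-2\delta}/(2\size{L*L}\log_2\size{L})$, and every $\ell \in L*L$ satisfies
\[ \size{\Ps(\ell)} > 2^{i-1} \geq \frac{\size{L}^2 n^{-2\delta}}{4\size{L*L}\log_2\size{L}}. \]
By \cref{thm:ST}, $\size{L}$ is polynomial in $n$, so $\log_2\size{L} = O(\log n) \ll n^\delta$ for $n$ sufficiently large, and the right-hand side then exceeds $\size{L}^2/(2\size{L*L}n^{3\delta})$.

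Next I would obtain the near-general position subset by a direct application of \cref{cor:almostgp} to $L$ with the same parameters $\eps,\alpha,\delta$. The corollary (with its absolute constant $c=5$) produces a set $R \subseteq L*L$ of size at least $\size{L}n^{-5\alpha}$ which contains no two parallel lines and in which at most $\ceil{\eps/\alpha}$ lines pass through any point of $\R^2$ --- in other words, $R$ is in near-general position. Setting $L' := R$ yields $\size{L'} \geq \size{L}n^{-5\alpha} \geq \size{L}n^{-5\alpha-4\delta}$, and since $L' \subseteq L*L$ every $\ell \in L'$ inherits the preimage bound from the previous paragraph, giving all the conclusions of the lemma.

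I do not expect any real obstacle here: the substantive work is hidden inside \cref{cor:almostgp} and the earlier construction of $L*L$. The only items requiring attention are the exponent arithmetic --- the $n^{3\delta}$ in the denominator (rather than $n^{2\delta}$) is precisely what allows the factor $\log_2\size{L}$ to be absorbed, and the extra slack $n^{-4\delta}$ in the cardinality estimate makes the corollary's bound comfortably stronger than what the statement demands.
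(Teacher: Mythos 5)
Your proof is correct, and it takes a genuinely different (and arguably cleaner) route for the preimage bound. The paper establishes the bound on $\size{\Ps(\ell)}$ via a double-counting argument: it defines $S$ as the pairs $(f,g)$ with $f*g$ rich and $T\subseteq S$ as those pairs with small preimage count, then writes the number of distinct lines as a harmonic sum $\sum_{(f,g)\in S}1/\size{\Ps(f*g)}$, splits it over $T$ and $S\setminus T$, and derives a contradiction if $T$ is more than half of $S$. It then sets $L'=\set{f*g}{(f,g)\in S\setminus T}$ and applies \cref{cor:almostgp} to $L'$. You instead unwind the dyadic construction of $L*L$ from Section 3 directly: since $L*L=L_i$ and $N_i=\sum_{\ell\in L_i}\size{\Ps(\ell)}\leq 2^i\size{L*L}$, the pigeonhole threshold $N_i\geq \size{L}^2n^{-2\delta}/(2\log_2\size{L})$ forces $2^{i-1}\gtrsim \size{L}^2/(\size{L*L}n^{2\delta}\log\size{L})$, and every line in $L*L$ automatically sits above this level, so the bound holds on \emph{all} of $L*L$, not just a carefully-chosen subset; the near-general-position extraction via \cref{cor:almostgp} then works off the shelf. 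What you gain is uniformity (the preimage lower bound for the entire set $L*L$) and independence from the somewhat delicate $S,T$ bookkeeping; what the paper's double-count buys is robustness --- it does not rely on the precise dyadic slicing used to define $L*L$ and would survive if $L*L$ were re-interpreted as the full set of rich compositions. Your exponent bookkeeping and the absorption of $\log_2\size{L}$ into $n^\delta$ are both fine, as is the identification $c=5$ in \cref{cor:almostgp}.
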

\begin{proof}
 Let 
 	\[ S := \{ (f,g) \in L \times L : f*g \text{ is $n^{1-5\delta}$-rich} \}. \]
By \cref{lemma:easy1.5}, $\size{S} \geq \size{L}^2 n^{-3\delta}$. Let 
	\[ T := \left\{ (f,g) \in S: \size{\Ps(f*g)} \leq \frac{\size{L}^2}{2\size{L*L}n^{3\delta}} \right\}. \]
If $\size{T} > \size{S}/2$, then we obtain an absurdity:
\begin{multline*}
	\size{L*L} = \sum_{(f,g)\in S} \frac{1}{\size{\Ps(f*g)}} = \sum_{(f,g) \in S \setminus T} \frac{1}{\size{\Ps(f*g)}} + \sum_{(f,g) \in T} \frac{1}{\size{\Ps(f*g)}} \geq \\ 
	\frac{1}{\size{L}} \size{S \setminus T} + \frac{2\size{L*L}n^{3\delta}}{\size{L}^2} \size{T} > \size{L*L}.
\end{multline*}
Thus, $\size{S\setminus T} \geq \size{L}^2 n^{-3\delta}/2 > \size{L}^2 n^{-4\delta}$. Letting $L' = \{ f*g : (f,g) \in S \setminus T \}$, we then have $\size{L'} \geq \size{L}n^{-4\delta}$. Apply \cref{cor:almostgp} to deduce that $L'$ contains a subset of $\size{L}n^{-5\alpha-4\delta}$ lines in near-general position.
\end{proof}
\begin{proposition}
	\cref{thm:weaksoly} implies \cref{thm:soly}.
\end{proposition}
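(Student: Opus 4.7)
Suppose for contradiction that \cref{thm:soly} fails.  Then there is some $\eps > 0$ such that for every $\delta > 0$ there exist arbitrarily large $n$, an $n$-element set $A \subseteq \R$, and a set $L$ of at least $n^\eps$ lines in general position (no two parallel, no three concurrent), each $n^{1-\delta}$-rich in $A \times A$.  The plan is to iterate the $*$ operation to produce, for some small $\eps' > 0$, a set $L_k$ of at least $n^{1-\eps'}$ lines that is still in near-general position with a uniformly bounded star-family size; invoking \cref{thm:weaksoly} on $L_k$ will then force a configuration incompatible with our near-general-position bound.

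Fix the parameters as follows.  Choose $\eps' > 0$ small so that the constant $C(\eps')$ from \cref{thm:weaksoly} is large, and pick $\alpha$ with $2/C(\eps') < \alpha < \alpha_0$ so that the quantity $C' := \lceil 1/\alpha \rceil$ bounds the star-family size $\lceil \eps_i/\alpha \rceil$ produced by \cref{cor:almostgp} at every iteration (since $\eps_i \leq 1$), and moreover $C' < C(\eps')$.  Pick $\delta$ sufficiently small that, after the $k = O(\log(1/\eps))$ iterations below, $2^k\delta$ remains below the $\delta$-threshold required by \cref{thm:weaksoly}, and take $n$ large.

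Now define inductively $L_0 := L$ and, given $L_i$ in near-general position with star-bound $C'$, let $L_{i+1} \subseteq L_i * L_i$ be the set extracted by \cref{lem:preimages}; by construction, $L_{i+1}$ is in near-general position with star bound at most $C'$, and each line of $L_{i+1}$ has at least $|L_i|^2/(2|L_i * L_i| n^{O(2^i\delta)})$ preimages in $L_i \times L_i$.  The heart of the argument is the following growth dichotomy: at each step $i$, either (a) $|L_i * L_i| \geq |L_i|^{1+\tau}$ for some constant $\tau > 0$, in which case the remark following \cref{cor:almostgp} applied to $L_i * L_i$ in place of $L*L$ extracts a near-general-position subset of size at least $|L_i * L_i| n^{-O(\alpha)}$, giving the growth $|L_{i+1}| \geq |L_i|^{1+\tau-O(\alpha)}$; or (b) $|L_i * L_i| < |L_i|^{1+\tau}$, in which case each line of $L_{i+1}$ acquires at least $|L_i|^{1-\tau - o(1)}$ preimages, and the resulting identities $g_j = f_j \circ \ell$ among many pairs $(f_j, g_j) \in L_i \times L_i$ can be shown -- by arguments parallel to those in the proofs of \cref{thm:4cases}(i) and (ii) -- to force a parallel or concurrent configuration in $L_i$ of size exceeding $C'$, contradicting the inductive hypothesis.

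Iterating case (a) for $k = O(\log(1/\eps))$ steps yields $L_k$ with $|L_k| \geq n^{1-\eps'}$, still in near-general position with star bound $C' < C(\eps')$, and still $n^{1-O(2^k\delta)}$-rich.  \cref{thm:weaksoly} applied to $L_k$ with parameter $\eps'$ then forces $L_k$ to contain either two parallel lines or a star family of $C(\eps')$ concurrent lines, both incompatible with near-general position under $C' < C(\eps')$ -- a contradiction.  The principal obstacle is case (b): converting ``many preimages of a line $\ell \in L_i * L_i$'' into an explicit parallel or concurrent configuration in $L_i$ that exceeds the star-bound $C'$.  This requires a careful extension of the pigeonhole/coincidence reasoning underlying \cref{thm:4cases}(i)-(ii), tracking how the slope and intercept relations $\mu_j \mapsto \lambda \mu_j$ and $c_j \mapsto b\mu_j + c_j$ imposed by $g_j = f_j \circ \ell$ propagate to produce the forbidden configuration.
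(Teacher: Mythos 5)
Your overall framework---iterate the $*$ operation, split on whether $|L_i * L_i|$ grows, and eventually apply \cref{thm:weaksoly}---matches the paper's starting point, but your ``case (b)'' contains a genuine gap that the paper resolves by a completely different mechanism. You claim that when $|L_i*L_i| < |L_i|^{1+\tau}$, the fact that each $\ell \in L_{i+1}$ has many preimages forces, ``by arguments parallel to the proofs of \cref{thm:4cases}(i)-(ii),'' a parallel or concurrent configuration in $L_i$ exceeding $C'$. This does not work: the proofs of \cref{thm:4cases}(i)-(ii) require a single \emph{parallel family} or \emph{star family} in $L*L$ whose total preimage count exceeds $|L|$ (resp.\ $C|L|$), at which point a pigeonhole on the first coordinate produces two pairs $(\ell_1,\ell_2),(\ell_1,\ell_3)$ mapping into the family, forcing $\ell_2\parallel\ell_3$ (resp.\ a star of size $C+1$). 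By contrast, a single line $\ell=f*g$ has preimages exactly of the form $(f,f\circ\ell)$ as $f$ ranges over some subset of $L_i$; there is never a collision in the first coordinate, so no forbidden configuration appears, no matter how many preimages $\ell$ has. (Indeed $|\Ps(\ell)|\le|L_i|$ is automatic and carries no structural information.) You acknowledge case (b) as ``the principal obstacle,'' but the proposed route to close it fails.

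What the paper actually does when growth stops at $L' = L^{*k}$ is different in kind: it applies \cref{lem:preimages} to produce $L''\subseteq L'*L'$ with many preimages per line, and then \emph{changes grid}. Setting $B$ to be the set of $y$-intercepts of lines in $L''$, the identity $f'*f(0) = g'*g(0)$ (valid whenever $(f,g)$ and $(f',g')$ are two preimages of the same $\ell\in L''$) is reinterpreted as an incidence between the line $f'*g'$ and a point of the new grid $B\times B$. Counting over all $\ell\in L''$ and all pairs of preimages shows that $\gg N^2 n^{-8\alpha'}$ lines $f'*g'$ are $Nn^{-7\alpha'}$-rich in $B\times B$; extracting a near-general-position subset and invoking \cref{thm:weaksoly} in this \emph{new} grid gives the contradiction. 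Your proof never leaves $A\times A$, so it cannot reproduce this step. Additionally, your parameter choice ``pick $\alpha$ with $2/C(\eps') < \alpha < \alpha_0$'' presupposes that $\alpha_0(\eps') > 2/C(\eps')$; nothing in \cref{thm:weaksoly} or \cref{cor:almostgp} guarantees this relation between the two theorems' constants, and the paper avoids the need for it because it never compares the star bound $C'$ directly against $C(\eps')$---it derives the contradiction in the new grid instead.
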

\begin{proof}
	Let $L$ be a set of $n^\eps$ lines in general position, all of which are $n^{1-\delta}$-rich for some $\delta > 0$ to be chosen later. Fix $\alpha < \eps$, and suppose $\size{L^{*(k+1)}} \geq \size{L^{*k}}n^{5\alpha}$ for all $k$ up to $m = \floor{2/\alpha}$.
(By \cref{cor:almostgp}, we may further assume that $L^{*j}$ is in near-general position for all $j \leq k$ at the cost of a factor of $n^{4\alpha}$ each iteration.) Redefining $\delta$ if necessary, we can take $1 - 4 \cdot 5^m \delta > 0$. For sufficiently large $n$, we then have
	\[ \size{L^{*(m+1)}} \geq n^{\eps+m\alpha} \geq n^2.  \]
But this violates \cref{thm:ST}, so such an $m$ cannot exist. Therefore there exists $k < 2/\alpha$ such that 
	\[ \size{L^{*(k+1)}} < \size{L^{*k}}n^{5\alpha}. \]
In this case, let $L' = L^{*k}$ for the smallest such $k$ (such that the above inequality would now read $\size{L'*L'} < \size{L'}n^{5\alpha}$), let $\alpha' < 5\alpha$ such that $\alpha' \ll \eps$, let $N = \size{L'}$, and choose $\delta' \leq 5^k \delta$ such that $\delta' \ll \alpha'$.

By applying \cref{lem:preimages}, we can restrict our attention to a subset $L'' \subseteq L'*L'$ of size at least $N n^{-5\alpha'-4\delta'}$ such that all lines in $L''$ are in near-general position and, for all $\ell \in L''$, 
	\[ \size{\Ps(\ell)} \geq \frac{N^2}{2\size{L'*L'}n^{3\delta'}} \geq \frac{N}{2n^{5\alpha+3\delta'}} > \frac{N}{2n^{\alpha'+3\delta}}. \]
If $\ell$ is a line in $L''$, then $\ell = f*g$ for some $f,g \in L'$. We will then have at least 
	\[ \frac{1}{C} \size{L''} (N n^{-\alpha'-4\delta'})^2 \geq \frac{1}{C} N^3 n^{-5\alpha'-8\delta'} \gg N^3 n^{-6\alpha'} \]
solutions $(f,g,f',g') \in L \times L \times L \times L$ to the equation 
	\begin{equation}
		\label{eq:yint}
		f'*f(0) = g'*g(0)
	\end{equation}
(The factor of $1/C$ comes from the fact that $L''$ is a set of lines in near-general position, so at most $C$ lines will share a $y$-intercept.)

Now, fixing $f',g'$ in \eqref{eq:yint} and letting $f,g$ vary, we can interpret \eqref{eq:yint} as the line $f'*g'$, where the $x$ and $y$ variables are the $y$-intercepts of $f$ and $g$. Letting $B$ be the set of $y$-intercepts among lines in $L''$, we may interpret the above count of solutions to \eqref{eq:yint} as stating that many of the lines $f'*g'$ are $N n^{-7\alpha'}$-rich in the new grid $B \times B$. Indeed, let 
	\[ S := \{ (f',g') \in L' \times L': f'*g' \text{ is $N n^{-7\alpha'}$-rich in $B \times B$} \}, \]
and let $p(f',g')$ denote the number of points that $f'*g'$ intersects in $B \times B$. Then, for a contradiction, assume $\size{S} < N^2 n^{-8\alpha'}$. This implies the absurdity:
\begin{multline*}
	N^3 n^{-6\alpha'} = \sum_{(f',g') \in S} p(f',g') + \sum_{(f',g') \in S^c} p(f',g') < \size{S}\size{B} + \size{S^c}(N n^{-7\alpha'}) < \\
	\size{S} N n^{\alpha'} + (N^2 - \size{S})N n^{-7\alpha'} < \size{S} N n^{\alpha'}+N^3 n^{-7\alpha'} < 2N^3 n^{-7\alpha'} \ll N^3n^{-6\alpha'}.
\end{multline*}
(Note that this requires $N \gg n^{4\alpha'}$, which is satisfied provided $\alpha' \ll \eps$ because $N \gg n^\eps$.) Thus, $\size{S} \geq N^2n^{-8\alpha'}$, and that implies that we have at least $Nn^{-8\alpha'}$ lines that are all $N n^{-7\alpha'}$-rich in $B \times B$. Moreover, since $L'$ is in near-general position, we may extract a set of lines from $S*S$ that are in near-general position, and this set has size at least $N n^{-11\alpha'}$. However, this is in contradiction with \cref{thm:weaksoly}, since $S*S$ is a set of $N^{1-\gamma}$-rich lines in near-general position for some $\gamma > 0$, and $\size{S*S} \geq N^{1-12\gamma}$.
\end{proof}

\section{Connections to Freiman's Theorem}

\cref{thm:soly} implies a statistical version of a Freiman-type theorem on linear functions with small image sets. We primarily base the following exposition on selected sections of Elekes' survey paper, \cite{elekes}. We direct the reader to the references and proofs therein.

A \emph{proper generalized arithmetic progression}, or GAP, $G \subseteq \R$ is a set of the form 
	\[ G = \{ a_0 + r_1 a_1 + \cdots + r_d a_d : 0 \leq r_1 \leq n_1, ..., 0 \leq r_d \leq n_d \}, \]
where $a_0,...,a_d \in \R$, $n_1,...,n_d \in \Z^+$, and $\size{G} = n_1 \cdots n_d$.  An analogous definition may be made for a \emph{proper generalized geometric progression}, or GGP, $G \subseteq \R \setminus \{0\}$, where 
	\[ G = \{ a_0 \cdot r_1 a_1 \cdots r_d a_d : 0 \leq r_1 \leq n_1, ..., 0 \leq r_d \leq n_d \}, \]
where $a_0,...,a_d \in \R \setminus \{0\}$, $n_1,...,n_d \in \Z^+$, and $\size{G} = n_1 \cdots n_d$. In both these cases, the \emph{dimension} of $G$ is defined to be the minimal value of $d$ here, given $G$.

Recall Freiman's theorem on GAPs in the real numbers:
	\begin{theorem}[Freiman]
		\label{thm:freiman}
		Let $X,Y \subset \R$. If $\size{X}, \size{Y} \geq n$ and $\size{X+Y} \leq c n$, then $X \cup Y$ is contained in a GAP of dimension at most $d = d(c)$ and size at most $Cn$, $C = C(c)$ (that is, $d$ and $C$ do not depend on $n$).
	\end{theorem}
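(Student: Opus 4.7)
The plan is to follow the classical Ruzsa-Chang route to Freiman's theorem, which proceeds via small-doubling reduction, a finite-group model, Bohr sets, and covering. First I would reduce to a single set of small doubling: from $\size{X+Y} \leq cn$ and $\size{X}, \size{Y} \geq n$, the Ruzsa triangle inequality gives $\size{X-Y} = O_c(n)$, and then $\size{(X \cup Y) + (X \cup Y)} = O_c(n)$. So setting $A := X \cup Y$, it suffices to show that any finite $A \subset \R$ with $\size{A+A} \leq K\size{A}$ is contained in a GAP of dimension $d = d(K)$ and size $\leq C(K)\size{A}$. Using \cref{thm:pluruz}, all iterated sumsets $\size{\ell A - \ell' A}$ are bounded by $K^{\ell+\ell'}\size{A}$, which is the only input we will need from the sumset calculus.

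Next I would appeal to Ruzsa's modelling lemma: provided $N$ is a prime larger than $8(\size{8A-8A})$, a large subset $A' \subseteq A$ with $\size{A'} \geq \size{A}/8$ is Freiman $8$-isomorphic to a subset of $\Z/N\Z$ of the same size, where $N = O_K(\size{A})$. This transports the problem to the finite cyclic group, opening the door to Fourier analysis. In $\Z/N\Z$, I would apply Bogolyubov's lemma (or Chang's quantitative sharpening) to $2A' - 2A'$: the iterated sumset contains a Bohr set $B(\Lambda,1/4)$ of dimension $\size{\Lambda} = O(K^2 \log K)$. A standard Minkowski lattice argument then extracts from $B(\Lambda,1/4)$ a proper GAP $P$ of dimension $\size{\Lambda}$ and size $\geq (c/\size{\Lambda})^{\size{\Lambda}} N$, i.e.\ of size $\Omega_K(\size{A})$.

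Now I would invoke Ruzsa's covering lemma. Since $P \subseteq 2A' - 2A'$, Plünnecke-Ruzsa gives $\size{A' + P} \leq \size{3A' - 2A'} \leq K^5 \size{A'}$, and $\size{P} \geq c_K \size{A'}$, so $A'$ can be covered by at most $K^{O(1)}$ translates of $P - P$. Choosing any element of $F$ for each coset, we find a GAP $\tilde P$ (namely $P - P$ together with the finite set $F$ realised as a further GAP of dimension $\log_2\size{F}$) of dimension $d = O_K(1)$ and size $O_K(\size{A})$ that contains $A'$. Pulling back through the Freiman isomorphism puts $A'$ inside a real GAP of the same dimension and size; the at most seven other cosets of the $8$-fold partition of $A$ are handled identically and absorbed by incrementing $d$ by a constant and $C$ by a constant factor. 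This yields the GAP containing $X \cup Y$.

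The main obstacle is controlling the dimension $d$ at each step: the Bohr-set-to-GAP conversion multiplies the dimension, and covering adds a logarithmic term. Using the raw Bogolyubov lemma would give dimension exponential in $K$, which is still fine for the statement since $K = K(c)$ is fixed, but the cleanest route is Chang's theorem, which keeps $\size{\Lambda}$ polynomial in $K$ and makes each inflation transparent. All other steps (modelling, covering, sumset bounds) are quantitative but routine given the tools already recorded in Section 2 of the paper.
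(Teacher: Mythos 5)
This theorem is cited in the paper as a classical black-box result attributed to Freiman (via Elekes' survey); the paper gives no proof of its own, so there is no ``paper's approach'' to compare against. Your sketch follows the standard Ruzsa--Chang proof of Freiman's theorem (Pl\"unnecke--Ruzsa sumset calculus, Ruzsa modelling into $\Z/N\Z$, Bogolyubov/Chang to locate a Bohr set in $2A'-2A'$, Minkowski's second theorem to extract a proper GAP, Ruzsa covering), and as a high-level outline that sequence is correct and does yield the stated result.

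Two points deserve correction. First, the opening reduction is slightly misattributed: from $\size{X+Y}\leq cn$ and $\size{Y}\geq n$ one has $\size{X+Y}\leq c\size{Y}$, and then \cref{thm:pluruz} (in its two-set form) gives $\size{2X}\leq O_c(n)$ and likewise $\size{2Y}\leq O_c(n)$, whence $\size{(X\cup Y)+(X\cup Y)}\leq \size{2X}+2\size{X+Y}+\size{2Y}=O_c(n)$; Ruzsa's triangle inequality alone does not bound $\size{X-Y}$ without first controlling an iterated sumset, and in any case $\size{X-Y}$ is not what you need. Second, and more substantively, the final step is wrong as written. The Ruzsa modelling lemma produces a single subset $A'\subseteq A$ with $\size{A'}\geq\size{A}/8$ that is Freiman $8$-isomorphic to a subset of $\Z/N\Z$; it does not produce an $8$-fold partition of $A$ whose other pieces ``are handled identically.'' The remaining $A\setminus A'$ has no a priori structure. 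The standard fix is a second application of the Ruzsa covering lemma: once you have a GAP $Q$ with $A'\subseteq Q\subseteq \ell A-\ell A$ and $\size{Q}=O_K(\size{A})$, observe $\size{A+A'}\leq K\size{A}\leq 8K\size{A'}$, so $A\subseteq F+A'-A'\subseteq F+Q-Q$ with $\size{F}\leq 8K$, and then enclose $F+Q-Q$ in a GAP of dimension $\dim Q+\size{F}$. (Relatedly, a finite set $F$ sits inside a GAP of dimension $\size{F}-1$, not $\log_2\size{F}$; a set of $m$ generic reals requires $m-1$ independent directions.) These are fixable omissions and the overall route is the right one, but as stated the last paragraph does not close the argument.
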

\noindent This theorem holds for GGPs in the reals as well.

If $X \subset \R$, let $E$ be a set of unordered pairs of elements in $X$. In this way, $E$ can be considered the edge set of a graph $G$ with vertex set $X$. We define 
	\[ X +_E X := \set{x'+x''}{\{x',x''\} \in E}. \]
	
	\begin{theorem}[Balog, Szemer\'edi]
		\label{thm:balog-sz}
		If $\size{E} \geq a\size{X}^2$ and $\size{X +_E X} \leq c\size{X}$, then there exists $\alpha = \alpha(a,c)$ such that some $\alpha\size{X}$ elements of $X$ are contained in a GAP of dimension $d = d(a,c)$ and size $C|X|$, $C = C(a,c)$.
	\end{theorem}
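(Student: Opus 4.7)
The plan is to derive the statement from two tools already invoked in the paper: the Balog--Szemer\'edi--Gowers theorem (\cref{thm:BSG}) and Freiman's theorem (\cref{thm:freiman}). The restricted-sumset hypothesis will be converted into a lower bound on the ordinary additive energy of $X$, after which BSG produces a dense subset of $X$ with small doubling, and Freiman places that subset inside a GAP.

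First, I would show that $E(X,X) \geq (a^2/c)\size{X}^3$. For each $s \in \R$, set
\[ r_E(s) := \#\seq{\seq{x,y} \in E : x+y=s} \qaq r_X(s) := \#\set{(x,y) \in X \times X}{x+y=s}, \]
and note that $r_X(s) \geq r_E(s)$ for every $s$. Since $r_E$ is supported on $X +_E X$ and $\sum_s r_E(s) = \size{E}$, Cauchy--Schwarz gives
\[ E(X,X) = \sum_s r_X(s)^2 \;\geq\; \sum_s r_E(s)^2 \;\geq\; \frac{\size{E}^2}{\size{X +_E X}} \;\geq\; \frac{(a\size{X}^2)^2}{c\size{X}} \;=\; \frac{a^2}{c}\size{X}^3. \]

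Next, I would apply \cref{thm:BSG} with parameter $K$ chosen so that $K^{-c_0} = a^2/c$ (for any fixed $c_0$ in the statement of BSG); since $a$ and $c$ are constants, $K$ depends only on $a,c$. This produces a subset $X' \subseteq X$ with $\size{X'} \geq \alpha_0 \size{X}$ and $\size{X'+X'} \leq K_0 \size{X'}$, where $\alpha_0, K_0$ depend only on $a$ and $c$. Finally, I would invoke \cref{thm:freiman} with the pair $(X',X')$: the hypothesis $\size{X'+X'} \leq K_0 \size{X'}$ yields that $X'$ is contained in a GAP of dimension $d = d(K_0)$ and size at most $C_0 \size{X'} \leq C_0 \size{X}$ with $C_0 = C_0(K_0)$. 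Taking $\alpha := \alpha_0$ and $C := C_0$ gives the theorem.

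There is no serious obstacle here; the argument is essentially a bookkeeping exercise of routing the hypotheses through two black boxes already stated in the paper. The only point requiring a bit of care is the Cauchy--Schwarz step, where one must compare the unordered edge-representation function $r_E$ correctly against the ordered-pair function $r_X$, and the tracking of the chain of constants $a,c \to a^2/c \to (\alpha_0,K_0) \to (d,C)$, each of which depends only on $a$ and $c$ and not on $\size{X}$.
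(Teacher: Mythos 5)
The paper does not prove this theorem; it is quoted as background (Balog--Szemer\'edi, 1994) in the survey section, with the attribution and statement taken from Elekes's exposition. So there is no ``paper proof'' to compare against. That said, your derivation is correct and is the standard modern route: convert the restricted-sumset hypothesis into an additive-energy lower bound, run Balog--Szemer\'edi--Gowers to pass to a dense subset with bounded doubling, and then invoke Freiman to wrap that subset in a low-dimensional GAP.

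Two small points worth being explicit about. First, in the Cauchy--Schwarz step you want to note that $\sum_s r_E(s) = \size{E}$ (each unordered edge contributes to exactly one fiber), $r_E$ is supported on $X +_E X$, and $r_X(s) \geq r_E(s)$ pointwise because each unordered edge yields at least one ordered representation---all of which you do say, and all of which are fine. Second, when invoking \cref{thm:BSG}, the normalization there writes the energy hypothesis as $E(A,A) \geq K^{-c_0}\size{A}^3$ with $C_1, C_2$ depending only on $c_0$; your choice fixes $c_0$ and solves $K = (c/a^2)^{1/c_0}$, so $K$, and hence $\alpha_0 = K^{-C_1}$ and $K_0 = K^{C_2}$, depend only on $(a,c)$, as required. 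The final application of \cref{thm:freiman} with both factors equal to $X'$ and $n = \size{X'}$ then gives a GAP of dimension $d(K_0)$ and size $C(K_0)\size{X'} \leq C(K_0)\size{X}$, and the chain of constants closes. Historically the original Balog--Szemer\'edi argument went through the Szemer\'edi regularity lemma and gave tower-type constants; your BSG-based version gives polynomial dependence of $\alpha$ and $C$ on $a$ and $c$, which is stronger than what the stated theorem claims and is entirely in the spirit of how this paper deploys its tools.
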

	
Elekes \cite{elekes} describes this version of the Balog-Szemer\'{e}di theorem as a ``statistical'' version of Freiman's theorem. He further observes that a ``uniform statistical'' hypothesis (that is, requiring a minimum degree proportional to $\size{X}$ in the graph $G$) guarantees that $X$ can be covered by a constant number of GAPs.

	\begin{theorem}[Elekes, Ruzsa]
		\label{thm:elek-ruz}
		Let $X \subset \R$ be finite, $\alpha > 0$ fixed, and $G$ a graph on $X$ with minimum degree $\alpha\size{X}$. Suppose that $\size{X +_E X} \leq c\size{X}$. Then, for all $\eps > 0$, $X$ can be partitioned into disjoint sets $X_1,...,X_k$ with the following properties:
		\begin{enumerate}
			\item Each $X_i$ is contained in a GAP $G_i$ of dimension $d$ and size $C\size{X}$ (the GAPs $G_i$ need not be disjoint);
			\item Each $X_i$ spans at least $\gamma\size{X}^2$ edges of $E$ (which implies that $k \leq 1/\gamma$); and 
			\item There are at most $\eps\size{X}^2$ ``leftover'' edges outside the $X_i$: 
				\[ \sum_{i<j} \sum_{x' \in X_i} \sum_{x'' \in X_j} \one_E(\{x',x''\}) \leq \eps\size{X}^2, \]
		\end{enumerate}
where $d = d(a,c,\eps)$, $C = C(\alpha,c,\eps)$, and $\gamma = \gamma(\alpha,c)$.
	\end{theorem}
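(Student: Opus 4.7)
The plan is to iteratively peel off large subsets of $X$ that are contained in bounded-dimension GAPs, using \cref{thm:balog-sz} together with \cref{thm:freiman} at each step, until the residual edge count drops below $\eps|X|^2$.

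First I would apply \cref{thm:balog-sz} to $(X,E)$: the minimum-degree hypothesis gives $|E| \geq \alpha|X|^2/2$, and combined with $|X +_E X| \leq c|X|$ this yields a subset $Y_1 \subseteq X$ of size at least $\alpha_1 |X|$ contained in a GAP $G_1$ of dimension $d_1$ and size $O(|X|)$. Then I would enlarge $Y_1$ to a piece $X_1$ by absorbing every $x \in X$ having at least $\alpha|Y_1|/2$ neighbors inside $Y_1$; for each such $x$ and each neighbor $y \in Y_1$, we have $x+y \in X +_E X$, so $x \in (X +_E X) - Y_1$, a set of size $O(|X|)$. An application of \cref{thm:pluruz} to bound the doubling of $Y_1 \cup (X_1 \setminus Y_1)$, followed by \cref{thm:freiman}, places $X_1$ inside a GAP $G_1'$ of bounded dimension and size $O(|X|)$. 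By the minimum-degree hypothesis, $X_1$ spans at least $\gamma|X|^2$ edges for some $\gamma = \gamma(\alpha,c) > 0$.

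Next I would iterate on $X^{(1)} := X \setminus X_1$: the induced edge set $E^{(1)}$ satisfies $X^{(1)} +_{E^{(1)}} X^{(1)} \subseteq X +_E X$, so the small-sumset hypothesis persists. As long as $|E^{(1)}| \geq \eps|X|^2$, a positive fraction of vertices in $X^{(1)}$ still have moderately high induced degree (since each removal of an $X_i$ costs each surviving vertex at most $|X_i|$ neighbors, and the number of rounds will be bounded), and a repeated application of Balog-Szemer\'edi produces $X_2 \subseteq X^{(1)}$ with the same structural properties. Each extracted piece absorbs at least $\gamma|X|^2$ edges, so the loop terminates after at most $k \leq 1/\gamma$ rounds. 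Vertices never absorbed into any $X_i$ are distributed as singleton parts of the partition (each trivially a GAP of dimension $0$), and by construction both the residual interior edges of the final $X^{(k)}$ and the cross-edges between distinct $X_i, X_j$ contribute at most $\eps|X|^2$ in total.

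The central obstacle is ensuring that the GAP parameters $d$ and $C$ remain bounded by functions of $(\alpha,c,\eps)$ alone across all iterations. Each enlargement step and each BSG/Freiman application inflates $d$ by a bounded amount and $C$ by a multiplicative constant; because the number of rounds is at most $1/\gamma(\alpha,c)$, the final parameters stay uniform in $|X|$, but verifying this requires carefully tracking the Pl\"unnecke-Ruzsa-type doubling of each enlarged $X_i$ relative to the containing GAP. A secondary subtlety is bounding the cross-edges in condition (3): any edge $\{x,x'\}$ with $x \in X_i, x' \in X_j$ contributes $x+x' \in X+_E X$, and a pigeonhole argument on the residual edge count (combined with the fact that the enlargement step of each $X_i$ already absorbed all ``richly connected'' vertices into it) confines the total number of such edges to below $\eps|X|^2$.
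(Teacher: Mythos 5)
The paper does not supply a proof of this result; it is quoted from Elekes' survey (\cite{elekes}) and used as background, so there is no in-paper argument to compare against. Reviewing your sketch on its own merits, there are two gaps that would prevent it from establishing the theorem as stated.

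First, parking unabsorbed leftover vertices as singleton parts is incompatible with the conclusion. Condition (2) requires \emph{every} $X_i$ to span at least $\gamma\size{X}^2$ edges, and a singleton spans none; this is precisely what forces $k \leq 1/\gamma$, which fails badly if you allow $\Theta(\size{X})$ singleton parts. The residual vertices must be absorbed into existing parts, and you would need an argument that this can be done without inflating the dimension $d$ or size constant $C$ of the containing GAPs beyond functions of $(\alpha, c, \eps)$. No such argument is given; in fact the natural absorption (add $x$ to the $X_i$ containing the largest share of its neighbors) only places $x$ in $(X +_E X) - X_i$, a set that is not obviously contained in a GAP of bounded dimension and $O(\size{X})$ size.

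Second, the cross-edge bound in condition (3) is asserted ``by construction'' but does not follow from your construction. Your absorption step controls only edges into the BSG kernel $Y_i$ (a vertex is absorbed when it has $\geq \alpha\size{Y_i}/2$ neighbors in $Y_i$), not into the enlarged $X_i$, so a vertex left outside $X_i$ can still have nearly $\size{X_i}$ neighbors inside $X_i$. Moreover, your termination condition bounds only the edges \emph{internal} to the residual $X^{(t)}$; the cross-edges between two already-extracted pieces $X_i, X_j$ are frozen at the moment $X_j$ is peeled and are not controlled by when the loop ends. In general those cross-edges can total a constant fraction of $\size{X}^2$ independently of $\eps$. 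Closing this gap typically requires an explicit merging pass (combining pieces whose cross-edge count exceeds the allowed budget, using the small-sumset hypothesis and a Ruzsa/Pl\"unnecke argument to show that merged pieces still sit inside a GAP of bounded dimension) or a more careful initial assignment of vertices to parts.

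A smaller point: the justification that degrees stay high across rounds is circular as written (``the number of rounds will be bounded'' is used to control the degree drop, but the bound on rounds presupposes that extraction keeps succeeding). This one is repairable: BSG needs only an edge-count hypothesis, not minimum degree, and if $\size{E^{(t)}} \geq \eps\size{X}^2$ then $\size{X^{(t)}} \geq \sqrt{2\eps}\,\size{X}$, so $\size{X^{(t)} +_{E^{(t)}} X^{(t)}} \leq c\size{X} \leq (c/\sqrt{2\eps})\size{X^{(t)}}$ and BSG applies with parameters depending only on $(c,\eps)$, uniformly over rounds. You should replace the degree-based argument with this.
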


	It turns out that Freiman's theorem extends to more general objects than sets of real numbers: sets of linear functions. Elekes observes that sets of lines with small composition sets satisfy a particular ``two extremities'' structure.
	
	Let $\Ls$ be the set of (affine) nonconstant linear functions $\R \to \R$.
\begin{theorem}[Elekes \cite{elekes}]
	\label{thm:lin-comp}
	For all $c,C > 0$, there exists $c' = c'(c,C) > 0$ with the following property.
	
	Let $\Phi,\Psi \subset \Ls$ be sets of $n$ lines each and $E \subset \Phi \times \Psi$ have size at least $c n^2$. Define 
	\[ \Phi \circ_E \Psi := \set{\phi \circ \psi}{(\phi,\psi) \in E}. \]
If $\size{\Phi \circ_E \Psi} \leq C n$, then there exist subsets $\Phi' \subseteq \Phi$ and $\Psi' \subseteq \Psi$ such that 
	\[ \size{(\Phi' \times \Psi') \cap E} \geq c' n^2 \]
% i.e. the induced subgraph on the bipartite graph G(V = \Phi \sqcup \Psi, E) is very dense
and either 
	\begin{enumerate}
		\item[(i)] both $\Phi'$ and $\Psi'$ consist of functions whose graphs are all parallel (but the directions may be different for $\Phi'$ and $\Psi'$); or 
		\item[(ii)] both $\Phi'$ and $\Psi'$ consist of functions whose graphs all pass through a common point (which may be different for $\Phi'$ and $\Psi'$).
	\end{enumerate}
\end{theorem}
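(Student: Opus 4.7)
The plan is to view $\Phi$ and $\Psi$ as subsets of the affine group $G = \textrm{Aff}(\R) \cong \R \rtimes \R^*$, in which a line $\phi(x) = \lambda x + b$ is the element $(\lambda, b)$ and composition is group multiplication; explicitly, $\phi \circ \psi = (\lambda \mu,\, \lambda d + b)$ when $\psi(x) = \mu x + d$. The hypothesis $\size{\Phi \circ_E \Psi} \le Cn$ together with $\size{E} \ge cn^2$ is then precisely a non-abelian Balog--Szemer\'edi--Gowers scenario. Moreover, the two conclusions of the theorem correspond exactly to the two conjugacy classes of maximal abelian subgroups of $G$: the translation subgroup $\{(1,b) : b \in \R\}$, a coset of which is a family of lines of common slope (case (i)), and the stabilizer of a point in $\R^2$, a coset of which is a set of lines concurrent at that point (case (ii)).

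First I would apply a non-commutative BSG theorem (following Tao) to pass from $E$ to subsets $\Phi_1 \subseteq \Phi$ and $\Psi_1 \subseteq \Psi$, each of size $\Omega(n)$, with full product satisfying $\size{\Phi_1 \circ \Psi_1} = O(n)$ and with $\size{(\Phi_1 \times \Psi_1) \cap E} \ge c_1 n^2$. Non-commutative Pl\"unnecke--Ruzsa would then control iterated products such as $\Phi_1^{-1} \Phi_1$ and $\Psi_1 \Psi_1^{-1}$ at size $O(n)$. Projecting by the slope homomorphism $\pi : G \to \R^*$, the image $\pi(\Phi_1) \subseteq \R^*$ is a multiplicative set of size at most $n$ with bounded doubling. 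I would then case-split according to whether a single slope dominates $\Phi_1$ and $\Psi_1$, or whether no slope does.

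In the dominant-slope case, two rounds of pigeonhole (first on $\Phi_1$, then on $\Psi_1$) isolate slopes $\lambda_0, \mu_0$ whose level sets contain $\Omega(n^2)$ edges of $E$, giving conclusion (i) directly. In the spread-out case $\size{\pi(\Phi_1)} = \Theta(n)$, and I would use the explicit composition formula together with the small-product control to show that the intercepts of lines in $\Phi_1$ depend on their slopes via an affine relation $b = y_0 - \lambda x_0$ for some fixed $(x_0, y_0) \in \R^2$, with a symmetric statement for $\Psi_1$; this is exactly concurrency. The main obstacle is this last step: converting the non-abelian small-tripling data into the rigid geometric statement of concurrency. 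Its content is that a small-tripling subset of $G$ whose slope projection is genuinely spread out must lie within a bounded number of cosets of a point-stabilizer subgroup. One can argue this either by invoking a Breuillard--Green--Tao style classification of approximate subgroups of solvable Lie groups, or more directly by fibering the small-product hypothesis over the slope quotient and running an additive Freiman argument fiberwise to extract the required affine dependence of intercepts on slopes. Tracking constants through BSG and the pigeonholing then yields the quantitative $c' = c'(c,C) > 0$.
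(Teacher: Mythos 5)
The paper does not prove this statement; it is quoted as a known result of Elekes and cited to \cite{elekes}, so there is no ``paper's own proof'' against which to check your argument. For reference, Elekes's own argument (and the route the present paper implicitly points to, via the equivalent image-set formulation in \cref{thm:lin-image}) is incidence-geometric, resting on the Szemer\'edi--Trotter theorem, which is a considerably more elementary toolkit than what you invoke.

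Your group-theoretic framing of the problem in $\mathrm{Aff}(\R)$ is correct and genuinely different from Elekes's route: parallel pencils and concurrent pencils are exactly cosets of the two conjugacy classes of maximal abelian subgroups, and because those subgroups meet in at most one element, the ``mixed'' outcome ($\Phi'$ a parallel family, $\Psi'$ a concurrent one) is excluded, since the product of large subsets of two transversal cosets has size $\Omega(n^2)$, contradicting the small-product bound. However, two steps are gaps rather than arguments. First, in the dominant-slope case, ``two rounds of pigeonhole'' does not deliver what you claim: having one large slope fiber in $\Phi_1$ and one in $\Psi_1$ does not force $\Omega(n^2)$ edges of $E$ between those particular fibers, and you would need to pigeonhole over weighted pairs of fibers and interleave this with the BSG refinement to keep both fiber size and edge density large simultaneously. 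Second, and centrally, the step you yourself label the ``main obstacle''---that a large small-tripling subset of $\mathrm{Aff}(\R)$ whose slope projection is spread out must lie in boundedly many cosets of a point stabilizer---is where the entire content of the theorem sits, and it is only named, not proved. Invoking a Breuillard--Green--Tao-type classification is heavy overkill for a two-dimensional solvable group and would obscure the explicit dependence $c'=c'(c,C)$; the ``fiberwise Freiman over the slope quotient'' alternative is the right instinct but needs to be carried out in full, since one must reconcile multiplicative small doubling in the slope coordinate with additive structure in the intercept fibers to extract the affine relation $b = y_0 - \lambda x_0$. As written this is a plausible outline of an alternative proof, but it defers the decisive step to machinery that is not supplied.
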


Using this theorem along with \cref{thm:freiman}, Elekes proves a result analogous to \cref{thm:freiman}: in fact, a true generalization of Freiman's theorem.

\begin{theorem}[Elekes \cite{elekes}]
	\label{thm:freiman-for-lines}
	For every $C > 0$ there are $C' = C'(C) > 0$, $C'' = C''(C) > 0$, and $d = d(C)$ with the following property.
	
	If $\Phi,\Psi \subset \Ls$ are sets of $n$ lines each and $\size{\Phi \circ \Psi} \leq C n$, then $\Phi^{-1} \cup \Psi$ is contained in a union of $C'$ parallel families or a union of $C'$ star families, and each of those families has size at most $C'' n$.
\end{theorem}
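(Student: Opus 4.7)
The plan is to iteratively apply \cref{thm:lin-comp} (the ``two-extremities'' dichotomy) to decompose $\Phi$ and $\Psi$ into a bounded number of parallel or star families, and then invoke \cref{thm:soly} to force all these families to be of the same type. Transfer to $\Phi^{-1} \cup \Psi$ is immediate, since taking inverses preserves parallelism (slope $\lambda \mapsto 1/\lambda$) and sends a star family through $(x_0,y_0)$ to one through $(y_0,x_0)$. The size bound $C''n$ is essentially trivial: each family sits inside $\Phi^{-1} \cup \Psi$, which itself has size at most $2n$. The real content lies in bounding the number of families by some $C' = C'(C)$ and in forcing uniformity of type.

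For the iteration, I apply \cref{thm:lin-comp} with $E = \Phi \times \Psi$ and $c = 1$ to obtain $\Phi_1 \subseteq \Phi$, $\Psi_1 \subseteq \Psi$, each a parallel or a star family, with $|(\Phi_1 \times \Psi_1) \cap E| \geq c'(1,C)\, n^2$, and so $|\Phi_1|, |\Psi_1| \geq \sqrt{c'(1,C)}\, n$. Removing $\Phi_1, \Psi_1$ and trimming the residual $(\Phi', \Psi')$ to equal sizes $n' \leq (1 - \sqrt{c'(1,C)})\, n$, the hypothesis $|\Phi' \circ \Psi'| \leq Cn \leq (C/\sqrt{c'(1,C)})\, n'$ still holds, and \cref{thm:lin-comp} applies again with worsened but still absolute constants. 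Iterating $k = O_C(1)$ times exhausts $\Phi$ and $\Psi$, producing families $(\Phi_i, \Psi_i)_{i=1}^k$ that cover them.

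To force uniformity of type, I argue by contradiction: suppose some extracted pair is a parallel family and another is a star family. I choose one representative $\ell_i$ from each of the $2k$ families, using random sampling to ensure the representatives sit in general position---distinct slopes from distinct parallel families, and no three concurrent since star centers are distinct and the finitely many ``bad'' triples can be avoided generically. Using $|\Phi \circ \Psi| \leq Cn$, I then show each representative is $n^{1-\delta}$-rich in a grid $A \times A$ built from $y$-intercepts (in the parallel regime) or logarithms of slopes (in the star regime); the richness is inherited from the Freiman structure that the small composition set imposes on these auxiliary coordinates. A large general-position set of $n^{1-\delta}$-rich lines then contradicts \cref{thm:soly}, forcing all families to be of a single type.

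The main obstacle is the construction in the uniformity step: reconciling the rigid parallel structure (whose natural richness lives in the additive world of intercepts) with the star structure (whose natural richness lives in the multiplicative world of slopes) into a single grid where the sampled representatives are simultaneously rich and in general position. This requires a careful translation between additive and multiplicative Freiman setups, possibly through a logarithm after a suitable conjugation centered at a star's fixed point, and handling the case where some star center lies on the $y$-axis or at infinity separately. Once this fusion is in place, \cref{thm:soly} closes the argument and the iterative covering produces the asserted $C' = C'(C)$ families.
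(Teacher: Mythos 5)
The paper does not supply its own proof of \cref{thm:freiman-for-lines}: it is quoted from Elekes's survey, and the surrounding text only notes that Elekes derives it from \cref{thm:lin-comp} together with Freiman's theorem (\cref{thm:freiman}). Your sketch goes in a different direction (iterating \cref{thm:lin-comp} and then invoking \cref{thm:soly}), and this route has two concrete breakdowns.

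First, the iteration does not terminate with bounded constants. After extracting $(\Phi_1,\Psi_1)$ the residual sets have size $n_1 \leq n$, and the hypothesis you can pass to \cref{thm:lin-comp} is only $\size{\Phi_{\mathrm{res}} \circ \Psi_{\mathrm{res}}} \leq Cn = (Cn/n_1)\,n_1$, so the quality parameter degrades to $C_1 = Cn/n_1$ (your displayed inequality $Cn \leq (C/\sqrt{c'})n'$ points the wrong way: it requires a \emph{lower} bound $n' \geq \sqrt{c'}\,n$, which you do not have). Since $c'(1,C_k) \to 0$ as $C_k \to \infty$, the guaranteed fraction $\sqrt{c'(1,C_k)}$ removed at step $k$ shrinks, and the residual $\prod_k\bigl(1-\sqrt{c'(1,C_k)}\bigr)n$ can stay bounded away from $0$. ``Iterating $k = O_C(1)$ times exhausts $\Phi$ and $\Psi$'' is the crux of your covering argument and it is simply asserted, not established.

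Second, the uniformity step cannot possibly run through \cref{thm:soly}. From $2k = O_C(1)$ families you can take at most one or two representatives per family without losing general position (two lines from a parallel family are parallel; three from a star family are concurrent), so your candidate general-position set has size $O_C(1)$. But \cref{thm:soly} only forbids general-position sets of rich lines of size at least $n^\eps$, and $O_C(1) < n^\eps$ for large $n$, so there is no contradiction at all. There is also an awkward methodological circularity in invoking the main new theorem of the present paper to reprove an older result of Elekes that motivates it.

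A clean route (and the one Elekes's proof follows in spirit) needs only a single application of \cref{thm:lin-comp}. If the extracted pair $(\Phi',\Psi')$ is parallel, say $\Phi'$ has common slope $\mu$ and $\size{\Phi'} \geq c' n$, then for each $\psi \in \Psi$ the set $\Phi' \circ \{\psi\}$ consists of $\size{\Phi'}$ distinct lines of slope $\mu \cdot (\text{slope of }\psi)$, and distinct slopes of $\psi$ produce disjoint batches. Hence $Cn \geq \size{\Phi \circ \Psi} \geq \size{\Phi' \circ \Psi} \geq (\#\text{slopes in }\Psi)\cdot c' n$, so $\Psi$ is covered by $C/c'$ parallel families, and symmetrically for $\Phi$ via $\Psi'$. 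The star case is analogous, tracking centers rather than slopes (with at most one line shared between two centers). This covers everything in one pass, forces type uniformity automatically, and does not touch \cref{thm:soly}. The size bound $C''n$ is, as you say, immediate.
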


Elekes further notes that a uniform statistical version of this Freiman-type theorem for linear functions can be deduced from \cref{thm:elek-ruz}.
\begin{theorem}[Elekes \cite{elekes}]
	\label{thm:unif-freiman-for-lines}
	Let $\alpha > 0$ be fixed, $\Phi,\Psi$ as in \cref{thm:freiman-for-lines}, and $G(\Phi,\Psi,E)$ a bipartite graph with minimum degree at least $\alpha n$. If $\size{\Phi \circ_E \Psi} \leq C n$, then $\Phi^{-1} \cup \Psi$ is the union of a constant number of parallel and star families.
\end{theorem}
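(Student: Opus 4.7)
The plan is to mimic Elekes' proof of \cref{thm:freiman-for-lines}, substituting the uniform statistical Freiman result \cref{thm:elek-ruz} in place of the weaker \cref{thm:balog-sz}, with \cref{thm:lin-comp} serving as the geometric engine that converts composition-set bounds on lines into sumset (or product-set) bounds on real parameters.

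First I would apply \cref{thm:lin-comp} to $(\Phi,\Psi,E)$: the minimum-degree hypothesis ensures $\size{E}\ge \alpha n^2$, so we obtain $\Phi_1\subseteq\Phi$ and $\Psi_1\subseteq\Psi$, each of size $\ge c_1 n$, with $\size{(\Phi_1\times\Psi_1)\cap E}\ge c_1 n^2$ (where $c_1=c_1(\alpha,C)$), such that either both $\Phi_1$ and $\Psi_1$ are parallel families or both are star families. Inversion preserves each structure type (the slope inverts, the star center is swapped), so it suffices to find a constant-size cover of $\Phi_1\cup\Psi_1$ by parallel/star families.

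Next I would reduce to the real-number setting. In the parallel case, with common slopes $\lambda_\Phi$ and $\lambda_\Psi$, parametrize $\Phi_1,\Psi_1$ by their $y$-intercepts $B_\Phi,B_\Psi$. Since $(\phi\circ\psi)(x)=\lambda_\Phi\lambda_\Psi x+(\lambda_\Phi b_\psi+b_\phi)$, the bound $\size{\Phi\circ_E\Psi}\le Cn$ becomes $\size{\lambda_\Phi B_\Psi +_{E_1} B_\Phi}\le Cn$, where the induced bipartite graph $E_1 = E\cap(\Phi_1\times\Psi_1)$ inherits a minimum-degree condition from $E$ after passing to a positive-fraction core by pigeonhole. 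The star case is handled analogously by parametrizing via slopes and using the GGP version of Freiman (or passing to logarithms). Applying \cref{thm:elek-ruz} then covers $B_\Phi\cup\lambda_\Phi B_\Psi$ by a constant number of bounded-dimension GAPs of size $O(n)$; since a set of lines with common slope whose intercepts lie in a bounded-dimension GAP of linear size decomposes (via iterated Freiman) into a constant number of arithmetic slices, these GAPs translate back to a constant number of parallel (resp.\ star) families of lines covering most of $\Phi_1\cup\Psi_1$.

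Finally, I would iterate on the residual graph $(\Phi\setminus\Phi_1,\Psi\setminus\Psi_1,E')$: each round absorbs $\Theta(n)$ lines into parallel or star families, so after $O_{\alpha,C}(1)$ rounds all but an $o(n)$-fraction of $\Phi^{-1}\cup\Psi$ has been covered; any leftover line with $\ge \alpha n$ neighbors in $E$ must, by pigeonhole into the already-extracted families together with a local application of \cref{thm:lin-comp}, lie in one of them. The main obstacle will be maintaining the minimum-degree-like hypothesis through iteration---removing $\Phi_1$ and $\Psi_1$ can strip surviving vertices of too many edges---but this is precisely what \cref{thm:elek-ruz} is engineered to handle via its $\varepsilon\size{X}^2$ bound on leftover edges, and transferring that bound through the parametrization is what forces the iteration to terminate with a bounded total number of covering families.
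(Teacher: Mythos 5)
The paper does not prove this theorem---it is cited from \cite{elekes} with only the remark that it ``can be deduced from \cref{thm:elek-ruz}''---so there is no in-paper proof to compare against, but your proposal has both a misdirected step and a genuine gap. The misdirected step: once \cref{thm:lin-comp} produces $\Phi_1$ and $\Psi_1$, each is \emph{already} a single parallel family (or a single star family); that is the literal conclusion of \cref{thm:lin-comp}. Hence $\Phi_1 \cup \Psi_1$ is covered by exactly two families and there is nothing left to decompose. Parametrizing by $y$-intercepts, invoking \cref{thm:elek-ruz}, and translating GAPs back to ``arithmetic slices of a parallel family'' does no work here, since a set of lines sharing a slope is one parallel family regardless of what structure its intercepts carry. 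Your sentence ``it suffices to find a constant-size cover of $\Phi_1 \cup \Psi_1$'' misstates the goal: you must cover all of $\Phi^{-1} \cup \Psi$.

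The genuine gap is in the iteration and mop-up, which is where the theorem's content actually lives. After removing $\Phi_1, \Psi_1$ and the edges between them, the residual bipartite graph need not retain a minimum degree of order $n$---a surviving $\phi$ may have had all $\geq \alpha n$ of its neighbors inside $\Psi_1$---so \cref{thm:lin-comp} cannot simply be reapplied and the claim that ``each round absorbs $\Theta(n)$ lines'' is unjustified. You gesture at the right repair (every leftover line still has $\geq \alpha n$ neighbors in the \emph{original} $E$, forcing it to compose heavily into the small set $\Phi \circ_E \Psi$ through the already-extracted families), but the argument that this actually pins each leftover $\phi$ to one of the existing parallel or star families is not given, and that is exactly the step which the one-shot partition of \cref{thm:elek-ruz}, with its explicit control of the $\eps\size{X}^2$ leftover edges, was engineered to replace. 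As written, the proposal does not assemble into a proof.
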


The previous theorems have focused on sets of lines $\Phi,\Psi$ whose composition set $\Phi \circ \Psi$ is small. Another direction we can explore is to study sets of lines $\Phi$ and points $H \subset \R$ whose \emph{image set} $\Phi(H)$ is small. In particular, \cref{thm:lin-comp} is equivalent to the following:
\begin{theorem}[Elekes \cite{elekes}]
	\label{thm:lin-image}
	For all $c,C > 0$, there exists $c' = c'(c,C) > 0$ with the following property.
	
	Let $H \subset \R$ and $\Phi \subset \Ls$ have size at least $n$, let $E \subset \Phi \times H$ have size at least $c n^2$, and define 
	\[ \Phi_E(H) := \set{\phi(h)}{(\phi,h) \in E}. \]
If $\size{\Phi_E(H)} \leq C n$, then there exists a parallel or star family $\Phi' \subseteq \Phi$ such that 
	\[ \size{E \cap (\Phi' \times H)} \geq c' n^2. \]
\end{theorem}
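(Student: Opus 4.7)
The plan is to reduce Theorem~\ref{thm:lin-image} to Theorem~\ref{thm:lin-comp} via the auxiliary family of translations $\Psi := \set{\psi_h}{h \in H}$ with $\psi_h(x) := x+h$; distinct $h$'s yield distinct $\psi_h$'s, so $\size{\Psi} = n$, and the natural edge set $E' := \set{(\phi, \psi_h)}{(\phi, h) \in E}$ has $\size{E'} = \size{E} \geq cn^2$. The key calculation is that, for $\phi(x) = a_\phi x + b_\phi$, the composition $\phi \circ \psi_h$ is the line with slope $a_\phi$ and $y$-intercept $\phi(h) \in \Phi_E(H)$. Every composition arising from an edge of $E'$ therefore has its $y$-intercept in a set of size at most $Cn$, so the composition set $\Phi \circ_{E'} \Psi$ is controlled once one also controls the number of slopes appearing.

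The argument then splits on the slope structure of $\Phi$. If some slope class $\Phi_a := \set{\phi \in \Phi}{\text{slope}(\phi) = a}$ carries a constant fraction of $E$, then $\Phi' := \Phi_a$ is already a parallel family witnessing the conclusion, and no further work is required. Otherwise, averaging the inequality $\size{E} \geq cn^2$ against $\sum_a \size{\Phi_a} = n$ produces a slope $a$ with $\size{E \cap (\Phi_a \times H)} \geq c\size{\Phi_a} n$; restricting $E'$ to this slope class forces every resulting composition to share slope $a$ and have intercept in $\Phi_E(H)$, so the composition set of the restricted system has size at most $Cn$. Feeding the restricted system into Theorem~\ref{thm:lin-comp} (in a version allowing unequal side sizes), the ``star'' alternative degenerates because no two distinct lines of a common slope can meet, forcing a parallel subset $\Phi' \subseteq \Phi_a$ of the desired form.

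The main obstacle is the regime where \emph{every} slope class of $\Phi$ has size $o(n)$: then the output of the above restriction gives only $c' \size{\Phi_a} n = o(n^2)$ pairs in $E \cap (\Phi' \times H)$, short of the required $\Omega(n^2)$. In this regime one cannot have a parallel family of linear size, and the expected conclusion is instead a star family. To locate it, the natural approach is to pass to the dual plane: identify $\phi \in \Phi$ with its slope--intercept point $(a_\phi, b_\phi)$ and each pair $(h, y) \in H \times \Phi_E(H)$ with the line $b = -ah + y$. Then $\size{E}$ counts point--line incidences between $\leq n$ dual points and $\leq Cn^2$ dual lines, placing us at the tight end of \cref{thm:ST}. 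Applying Theorem~\ref{thm:lin-comp} (or an analogous incidence-structure result) to this dual system should yield either a parallel or a star family of dual lines, which in turn corresponds to a parallel or star family of original $\phi$'s. Synthesizing the primal slope-pigeonhole with the dual analysis into a single unified conclusion is where the technical heart of the proof lies.
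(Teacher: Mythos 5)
The paper itself gives no proof of this statement; it merely records it (citing Elekes) as equivalent to \cref{thm:lin-comp}, so there is no internal argument to compare against, and the proposal must stand on its own. Your reduction via the family of translations $\Psi = \{\psi_h : h \in H\}$, $\psi_h(x) = x + h$, is the natural first move, but, as you yourself observe, the composition set $\Phi \circ_{E'} \Psi$ is only bounded by $(\text{number of distinct slopes in }\Phi)\cdot Cn$, which need not be $O(n)$, so \cref{thm:lin-comp} does not apply directly. Your case split does not close the resulting gap: if no single slope class carries $\Omega(n^2)$ edges, the slope class $\Phi_a$ you extract by averaging carries only $\Theta(\size{\Phi_a}\,n)$ edges, and $\size{\Phi_a}$ can be $o(n)$, so the parallel family you obtain falls short of the required $c'n^2$ threshold. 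This is precisely the regime in which the conclusion ought to be a \emph{star} family, and the argument produces no mechanism for finding one.

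The duality sketch does not repair this, and in fact contains a correspondence error. Under the point--line duality you describe, each $\phi \in \Phi$ becomes a dual \emph{point} $(a_\phi, b_\phi)$, and each pair $(h,y) \in H \times \Phi_E(H)$ becomes a dual \emph{line} $b = -ah + y$. A parallel or star family inside $\Phi$ therefore corresponds to a set of \emph{collinear dual points}, not to a parallel or star family of dual lines. A parallel family of dual lines corresponds to fixing a single $h \in H$, and a star family of dual lines corresponds to a collection of pairs $(h,y)$ lying on a common line in the $(h,y)$-plane; neither is a statement about a subset of $\Phi$. So even if one could apply \cref{thm:lin-comp} (or an incidence structure theorem) to the dual configuration and extract a parallel or star family of dual lines, that output would not translate into the family $\Phi' \subseteq \Phi$ the theorem demands. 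The proposal is therefore incomplete at its acknowledged crux: the many-slopes regime, where the star alternative must be produced, is left unresolved, and the route suggested for resolving it does not lead where you indicate.
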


A result about small image sets analogous to \cref{thm:freiman-for-lines} holds: 
\begin{theorem}[Elekes \cite{elekes}]
	\label{thm:freiman-for-lines-image}
	For every $C > 0$ there are $C'=C'(C) > 0$ and $d = d(C) > 0$ with the following property.
	
	Let $\Phi \subset \Ls$ be a set of $n$ lines and $H \subset \R$ be a set of $n$ points. If $\size{\Phi(H)} \leq C n$, then $\Phi$ is contained in union of at most $C'$ parallel families or a union of at most $C'$ star families, and each of these families has size at most $C'' n$.
\end{theorem}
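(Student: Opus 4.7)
I would prove \cref{thm:freiman-for-lines-image} by reducing to \cref{thm:soly} via a structural covering argument. \textbf{Setup:} Let $A = H \cup \Phi(H)$, so $N := \size{A} \leq (C+1)n$. Since $(h, \phi(h)) \in H \times \Phi(H) \subseteq A \times A$ for each $h \in H$, every line $\phi \in \Phi$ is $n$-rich in the grid $A \times A$, hence is $N^{1-\delta_0}$-rich with $\delta_0 = \log(C+1)/\log N$, which is arbitrarily small for $n$ large.

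\textbf{Main strategy:} Argue by contrapositive. Suppose $\Phi$ is covered neither by $C'$ parallel families nor by $C'$ star families, for a sufficiently large $C' = C'(C)$ to be chosen. I aim to extract from $\Phi$ a subset $\Phi^* \subseteq \Phi$ of size $\size{\Phi^*} \geq N^\eps$ (for some $\eps > 0$ depending only on $C$) that is in near-general position---no two parallel, and no more than some constant $C_0$ concurrent at any point. Applying \cref{thm:soly} to $\Phi^*$ with grid $A \times A$ then forces two parallel lines or three concurrent lines in $\Phi^*$, contradicting its near-general-position property (when $C_0 < 3$; a minor strengthening handles larger $C_0$).

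\textbf{Extraction of $\Phi^*$:} Iterate \cref{thm:lin-image} (Elekes' image-set ``two extremities'' result, which in turn is a consequence of \cref{thm:soly}): at each stage, peel off from the current remainder $\Phi_i$ a parallel or star family of size $\geq c'(1,C) \cdot \size{\Phi_i}$. After $O_C(1)$ peeling steps, the remainder has small maximum parallel-family and star-family sizes (each at most $n^{1-\eps}$, for suitably chosen parameters). On this remainder, a greedy selection avoiding previously chosen slopes and previously formed pairwise intersection points yields $\Phi^*$ of size $\geq N^\eps$ in near-general position: at step $k$ the number of ``bad'' lines is at most $k \cdot n^{1-\eps} + \binom{k}{2} \cdot n^{1-\eps}$, which stays below $\size{\Phi_i}$ as long as $k \ll n^{\eps/2}$.

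\textbf{Main obstacle:} The crux is showing that the peeling terminates in $O_C(1)$ steps with the remainder having bounded maximum parallel/star family sizes. Naively removing one family at a time reduces the maximum family size only slowly, and controlling the count of ``big'' families (those of size $\geq n^{1-\eps}$) to be constant requires further structural input. This is where one must leverage the fact that the image-set condition $\size{\Phi(H)} \leq Cn$ constrains not just one family but the global arithmetic/geometric structure of the slopes and intercepts---specifically, Freiman's theorem (\cref{thm:freiman}) applied to the intercept set of any large extracted parallel family (where $|\lambda H + B| \leq Cn$ forces $B$ into a low-dimensional GAP) and its multiplicative analog for star families (where $|\Lambda(H-x_0)| \leq Cn$ forces the slope set $\Lambda$ into a low-dimensional GGP). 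Combining the peeling with this Freiman-type classification ensures that after constantly many iterations, the remainder either is empty or admits a direct general-position extraction of the required size, closing the contradiction.
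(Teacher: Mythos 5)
This theorem is cited in the paper from Elekes~\cite{elekes}; the paper gives no proof of it, so there is no ``paper's own proof'' to compare against. Your proposal should therefore be evaluated on its own terms, and as a possible new derivation from \cref{thm:soly}. Before the substantive points, one misattribution: \cref{thm:lin-image} is \emph{not} a consequence of \cref{thm:soly}. It is a prior result of Elekes, equivalent to \cref{thm:lin-comp} and proved directly from Szemer\'edi--Trotter, and both predate \cref{thm:soly}. Nothing breaks because of this, but it mischaracterizes the logical dependency.

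The central gap is the dichotomy in the conclusion. \cref{thm:freiman-for-lines-image} asserts that $\Phi$ is covered by at most $C'$ \emph{parallel} families, or by at most $C'$ \emph{star} families --- not by a mixed bag of $C'$ families of the two types. Your contrapositive structure (``if $\Phi$ is not covered by $C'$ parallel-only families and not by $C'$ star-only families, extract a general-position subset and contradict \cref{thm:soly}'') does not actually yield a large general-position subset from the hypothesis you assume. A set $\Phi$ that is the disjoint union of two large parallel families and two large star families is covered by neither $C'$ parallel families alone (the star families have $\Theta(n)$ distinct slopes) nor $C'$ star families alone (a parallel family of one slope cannot sit inside $O(1)$ star families), yet it contains no large general-position subset. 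Such a $\Phi$ is ruled out by the actual hypothesis $\size{\Phi(H)} \leq Cn$ --- a parallel family forces $H$ into a low-dimensional GAP while a star family forces a shifted copy of $H$ into a GGP, and for $n$ large a set cannot carry both structures nontrivially --- but this incompatibility is precisely the content of the dichotomy, and it never appears in your argument. You gesture at the Freiman/GAP and GGP classifications, but without the incompatibility lemma they do not close the proof.

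Two further gaps are worth flagging. First, the peeling iteration: each application of \cref{thm:lin-image} to the remainder $\Phi_i$ uses a density constant $C_i = C/\prod_{j<i}(1-c'_j)$ that increases, and $c'_i = c'(1,C_i)$ decreases; you assert $O_C(1)$ steps suffice but do not show $\prod(1-c'_i)$ drops below a threshold in boundedly many steps. Second, even granting the peeling terminates, what you get is a covering by $O_C(1)$ mixed families plus a leftover on which you invoke \cref{thm:soly}; the conclusion you reach in the contradiction branch is only ``$\Phi$ is covered by $O_C(1)$ mixed families,'' which is weaker than the stated theorem. To recover Elekes' statement you would still need the Freiman-type incompatibility argument sketched above, at which point the reduction to \cref{thm:soly} is doing less work than it appears.
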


Elekes conjectured that a uniform statistical version of this theorem similar to \cref{thm:unif-freiman-for-lines} holds. We study one such statement in which there is a minimum degree requirement on only one side of the bipartite graph.
\begin{conjecture}[Elekes \cite{elekes}]
	\label{conj:unif-freiman-for-lines-image}
	Let $\alpha > 0$ be fixed, $\Phi,H$ as in \cref{thm:lin-image}, and $G(\Phi,H,E)$ a bipartite graph with degree at least $\alpha n$ for each $\phi \in \Phi$. If $\size{\Phi_E(H)} \leq C n$, then $\Phi$ is the union of a constant number of parallel and star families.
\end{conjecture}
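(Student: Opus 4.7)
The plan is to combine \cref{thm:soly} with Elekes's structural result \cref{thm:lin-image} through an iterative extraction of parallel and star families from $\Phi$. First I translate the hypothesis of the conjecture into the rich-lines setting of \cref{thm:soly}: setting $A = H \cup \Phi_E(H)$, we have $|A| \leq (C+1)n$, and each $\phi \in \Phi$ is $\alpha n$-rich in $A \times A$ via the $\geq \alpha n$ incidences $(h, \phi(h))$ for $(\phi,h) \in E$. In particular each $\phi$ is $|A|^{1-\delta}$-rich for every $\delta > 0$ once $n$ is large, so \cref{thm:soly} tells us that any general-position subset of $\Phi$ has size strictly less than $|A|^\eta = O(n^\eta)$ for every $\eta > 0$.

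The main extraction step uses \cref{thm:lin-image}. Since $|E| \geq \alpha n^2$ and $|\Phi_E(H)| \leq Cn$, it produces a parallel or star family $F_1 \subseteq \Phi$ with $|F_1| \geq c'(\alpha, C) \cdot n$. I iterate by setting $\Phi_k = \Phi \setminus \bigcup_{i \leq k} F_i$ and $E_k = E \cap (\Phi_k \times H)$: the minimum-degree hypothesis propagates (each surviving $\phi$ keeps all its $\geq \alpha n$ edges), and the restricted image set is a subset of $\Phi_E(H)$, hence still of size at most $Cn$. So long as $|\Phi_k| \geq n/2$, reapplying \cref{thm:lin-image} with effective density $\geq \alpha$ and effective image bound $\leq 2C$ extracts a new family of size at least $c^* n$, where $c^* := c'(\alpha, 2C)/2$. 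After at most $\lceil 1/(2c^*) \rceil$ such extractions the residue $\Phi_{K_1}$ has size less than $n/2$.

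To dispose of the residue in a bounded number of additional families, I would use the composition operation $*$ of Section 3 to amplify the non-concentration bound of \cref{thm:soly}. The idea is to show that any residue with no parallel or star family of size $\Omega_{\alpha,C}(n)$ must, after one or two passes of $*$, yield (via \cref{cor:almostgp}) a near-general-position subset of size $\geq n^{1-\eps}$ consisting of lines still sufficiently rich in $A \times A$ --- directly contradicting \cref{thm:weaksoly}. Hence the residue either contains a family large enough to extract, or already has size $O_{\alpha,C}(1)$, in which case it is absorbed as singleton parallel families.

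The main obstacle is executing this composition-amplification step while retaining the structural hypotheses of \cref{cor:almostgp}, namely a \emph{constant} star-family bound. After the greedy extraction described above, the residue only has its families bounded by $c^* n$ --- a function of $n$, not a constant --- so \cref{cor:almostgp} cannot be invoked directly. Closing this gap requires a secondary iteration that reduces the star bound from $c^* n$ down to a constant depending only on $\alpha$ and $C$, while controlling how many further extractions this costs so that the total remains $O_{\alpha,C}(1)$. I expect the cleanest route is a two-parameter iteration in which one parameter tracks a decreasing ``size threshold'' for family extraction and the other tracks the current maximum family size, using \cref{thm:soly} (combined with \cref{thm:weaksoly}) to rule out the intermediate regime in which neither a large family can be extracted nor the residue is already small.
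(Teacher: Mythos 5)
The statement you are asked to prove is stated as a \emph{conjecture} in the paper, and the paper does not prove it. What the paper actually establishes is (a) that \cref{conj:unif-freiman-for-lines-image} is equivalent to \cref{conj:unif-freiman-for-lines-cprod}, (b) that both follow from Solymosi's \cref{conj:solymosi}, and (c) the strictly weaker \cref{cor:unif-freiman-for-lines-image-delta-eps}, in which ``a constant number of families'' is relaxed to ``$n^\eps$ families.'' The proof of that corollary goes through \cref{cor:unif-freiman-for-lines-cprod-delta-eps}: take a \emph{maximal} general-position subset $L'$ of the rich lines, invoke \cref{thm:soly} to get $\size{L'} \leq n^\eps$, and note that by maximality every remaining line is parallel to some line of $L'$ or passes through an intersection point of two lines of $L'$, giving a cover by at most $n^\eps + n^{2\eps}$ parallel and star families. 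Your iterative extraction via \cref{thm:lin-image} is a genuinely different decomposition strategy, but it does not reach a stronger conclusion.

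The gap you acknowledge at the end is not a technical one that more careful iteration can close; it is exactly the gap between \cref{thm:soly} (an $n^\eps$ bound on the size of a general-position rich subfamily) and Solymosi's full \cref{conj:solymosi} (a constant bound), which remains open. All the composition-amplification machinery in the paper (\cref{thm:4cases}, \cref{cor:almostgp}, \cref{thm:weaksoly}) ultimately feeds into \cref{thm:soly} and cannot deliver better than $n^\eps$. Your extraction loop also has a concrete secondary flaw: once the residue $\Phi_k$ shrinks to size $m < n$, re-applying \cref{thm:lin-image} at scale $n' = m$ requires the image bound $\size{\Phi_{E_k}(H)} \leq C' m$, whereas you only control $\size{\Phi_E(H)} \leq Cn$; the effective constant $C' = Cn/m$ grows as $m$ shrinks, the extraction fraction $c'(\alpha, C')$ decays accordingly, and you cannot argue that $O_{\alpha,C}(1)$ iterations suffice. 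What the paper's tools do support is the $n^\eps$-family version, and the cleanest route to it is the paper's maximal general-position subset argument, not the greedy density extraction.
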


In terms of cartesian products, this conjecture is equivalent to the following:
\begin{conjecture}[Elekes \cite{elekes}]
	\label{conj:unif-freiman-for-lines-cprod}
	If each of $c n$ lines is $c n$-rich in an $n \times n$ cartesian product, then the set of lines is the union of $C = C(c)$ parallel and star families.
\end{conjecture}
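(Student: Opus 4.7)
The plan is to prove \cref{conj:unif-freiman-for-lines-cprod} by iteratively peeling off large parallel and star families from $L$. \cref{thm:lin-image} is the engine for extracting each family, and \cref{thm:soly} governs any residual.

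First, I would apply \cref{thm:lin-image} with $\Phi = L$, $H = A$, and $E = \set{(\ell, x) \in L \times A}{\ell(x) \in A}$. Each line is $cn$-rich, so $\size{E} \geq (cn)^2$; and $\Phi_E(H) \subseteq A$ gives $\size{\Phi_E(H)} \leq n$. Taking the parameter $n'$ in \cref{thm:lin-image} to be $\min(\size{L}, n)$, the theorem yields a parallel or star family $\Phi_1 \subseteq L$ of size at least $c_1(c) \cdot n$. I would then remove $\Phi_1$ to get $L_1$, observe that its lines remain $cn$-rich in $A \times A$, and re-apply \cref{thm:lin-image}. As long as $\size{L_k}$ is comparable to $n$, each step removes a constant fraction of $n$ lines, so after $O_c(1)$ iterations the residual $L^*$ has size at most $\alpha n$ for any chosen small $\alpha = \alpha(c)$.

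Second, to cover the residual $L^*$, I would invoke \cref{thm:soly}. Each line in $L^*$ is $cn$-rich, hence $n^{1-\delta}$-rich for any $\delta > 0$, so by \cref{thm:soly} every general-position subset of $L^*$ has size less than $n^\eps$ for any $\eps > 0$. Combined with a standard greedy general-position extraction (if every parallel and every star family in $L^*$ has size at most $K$, then $L^*$ contains a general-position subset of size $\sqrt{2\size{L^*}/K}$), this forces a family in $L^*$ of size at least $2\size{L^*}/n^{2\eps}$. I would peel it off and iterate on the remaining set.

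The hardest part will be bounding the \emph{total} number of families by a constant $C(c)$ independent of $n$. In the first stage, the constant in \cref{thm:lin-image} depends on the image-bound parameter, which grows as $\size{L_k}$ shrinks; a dyadic decomposition in the size scale of $\size{L_k}$ is needed to keep the number of \cref{thm:lin-image} applications bounded. In the second stage, the greedy-plus-\cref{thm:soly} argument naively produces families shrinking in relative size as the residual shrinks, risking a $\log n$-factor in the cover; a finer analysis---for example, reapplying \cref{thm:lin-image} at smaller scales or bootstrapping via the structural information supplied by \cref{thm:soly}---is needed to close the gap. The interplay between \cref{thm:lin-image} (quantitatively strong extractions) and \cref{thm:soly} (ruling out general-position obstructions in the residual) is the heart of the argument.
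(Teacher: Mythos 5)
The statement you set out to prove, \cref{conj:unif-freiman-for-lines-cprod}, is explicitly labeled a \emph{conjecture} in the paper, and the paper does not prove it; it remains open. What the paper establishes in this section is: (a) \cref{conj:unif-freiman-for-lines-cprod} is equivalent to \cref{conj:unif-freiman-for-lines-image} (the ``Proof of Equivalence''); (b) Solymosi's \cref{conj:solymosi} implies \cref{conj:unif-freiman-for-lines-cprod}; and (c) since the paper's main \cref{thm:soly} is a strictly weaker variant of \cref{conj:solymosi}---the constant $C(c)$ is replaced by $n^\eps$ and $cn$-richness is relaxed to $n^{1-\delta}$-richness---plugging it into (b) yields only \cref{cor:unif-freiman-for-lines-cprod-delta-eps}, which bounds the number of families by $n^\eps$, not by a constant.

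Your proposal stalls at exactly this gap. In the second stage you need \cref{thm:soly} to cap general-position subsets of the residual $L^*$ by a constant, but the theorem only caps them by $n^\eps$ for a fixed $\eps$ and its associated $\delta(\eps)$; you cannot let $\eps\to 0$ for a single fixed $n$, since the threshold $n(\eps,\delta)$ blows up as $\eps\to 0$. Consequently, even with ideal bookkeeping, your greedy peeling produces on the order of $n^{2\eps}$ families (and, as you already note, likely a further $\log n$ factor), and no choice of parameters turns this into a constant $C(c)$. The ``hardest part'' you single out is not a technical loose end to be patched by a dyadic decomposition or a finer analysis---closing it is equivalent to proving Solymosi's \cref{conj:solymosi} itself, which this paper does not do.

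It is also worth noting that, conditional on \cref{conj:solymosi}, the paper's derivation of \cref{conj:unif-freiman-for-lines-cprod} is a single paragraph and needs neither \cref{thm:lin-image} nor any iterative peeling: take a maximum general-position subcollection $L'$ of the $cn$-rich lines, so $\size{L'}\le C(c)$; then every remaining line is parallel to some line of $L'$ or passes through an intersection point of two lines of $L'$ (otherwise it could be added to $L'$, contradicting maximality), and this covers $L$ by at most $C+C^2$ parallel and star families. If you are willing to assume Solymosi's conjecture, that is the intended route; if not, the strongest unconditional conclusion you can extract from this paper is \cref{cor:unif-freiman-for-lines-cprod-delta-eps}.
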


The following proof is not given explicitly in \cite{elekes} but can be inferred from similar arguments presented in that paper.

\begin{proof}[Proof of Equivalence.]
	Suppose \cref{conj:unif-freiman-for-lines-image} holds. Fix $0 < c < 1$, and let $\Phi$ be a set of $c n$ lines, each $c n$-rich in a $n \times n$ Cartesian product $A \times B$. Construct the bipartite graph $G(\Phi,A,E)$, where an edge connects $\phi \in \Phi$ and $a \in A$ whenever $\phi(a) \in B$. Then the degree of each $\phi \in \Phi$ is at least $c n$, so $\Phi$ is the union of a constant number of parallel and star families.

	For the other implication, suppose we have a set of $N$ lines $\Phi$, a set of $n$ points $H$, and an edge set $E \subseteq \Phi \times H$ such that $\size{\Phi_E(H)} \leq C n$ for $C > 0$. Observe that each line from $\Phi$ occurs in at least $\alpha n$ pairs of $E$, so we have $n \geq \alpha n$ lines which are each $\alpha n$-rich in the cartesian product $(H \cup \Phi_E(H)) \times (H \cup \Phi_E(H))$, which has size at most $(C+1)^2N^2$. So the lines are the union of a constant number of parallel and star families.
\end{proof}

\cref{conj:unif-freiman-for-lines-cprod}, and therefore \cref{conj:unif-freiman-for-lines-image}, would be implied by following conjecture of J\'{o}zsef Solymosi:
\begin{conjecture}[Solymosi \cite{elekes}]
	\label{conj:solymosi}
	Among the lines which are $c n$-rich in an $n \times n$ cartesian product, at most $C = C(c)$ can be in general position.
\end{conjecture}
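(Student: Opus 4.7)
The plan is to first pass to the regime of \cref{thm:soly} and then attempt to bootstrap its $n^\varepsilon$ bound down to a true constant. For any fixed $\delta > 0$ and $n \geq c^{-1/\delta}$, a $cn$-rich line is automatically $n^{1-\delta}$-rich, so \cref{thm:soly} already forces any general-position family of $cn$-rich lines to have size at most $n^\varepsilon$ for every $\varepsilon > 0$, once $n$ is large enough. The residual content of \cref{conj:solymosi} is therefore to rule out any potential counterexample whose size $k(n)$ grows to infinity but more slowly than every positive power of $n$; that is, to improve a bound of $k = n^{o(1)}$ to $k = O_c(1)$ (and to handle bounded $n$ trivially).

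To attack this residual case, I would iterate the $*$-composition machinery, exploiting the fact that the richness here is a genuine constant rather than $n^{1-\delta}$. Starting from $L$ of size $k$, general-position, and $cn$-rich, \cref{lemma:easy1} produces $\Omega(k^2 c^2)$ pairs $(\ell,\ell')$ with $\ell * \ell'$ at least $(c^2/2)n$-rich, and \cref{cor:almostgp} extracts from $L * L$ a near-general-position subfamily of size roughly $k\, n^{-O(\alpha)}$ that is still $c' n$-rich for a new constant $c' = c'(c)$. Iterating $r$ times produces a near-general-position family of size $k\, n^{-O(r\alpha)}$ whose members are $c^{(r)} n$-rich for a constant depending only on $c$ and $r$. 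The hope would be that, combined with \cref{lem:preimages} and the image-set analysis of Section 8, these iterations either yield more rich lines than \cref{thm:ST} permits or pin the slope and intercept sets of $L$ into additive/multiplicative structures that contradict the general-position hypothesis itself.

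The main obstacle is quantitative and, I believe, fundamental to the methods of this paper. Every invocation of \cref{thm:4cases} or \cref{cor:almostgp} loses a factor of at least $n^{c\alpha}$, and the core estimates \cref{thm:4cases}(i)--(ii) are \emph{relative} bounds of the form $|P| \leq 2|L*L|n^{2\delta}/|L|$, which become vacuous the moment $|L|$ is subpolynomial. Consequently, as soon as we are in the residual regime $k = n^{o(1)}$, a single iteration erases the lower bound we were trying to preserve and the bootstrap halts; the composition method cannot distinguish a truly bounded family from one of size $(\log n)^{100}$ or $n^{1/\log\log n}$. Pushing past this barrier would appear to require either a new incidence-theoretic input that is sensitive to constant-order richness in general-position configurations (where Szemer\'edi--Trotter is far from sharp), or a loss-free structural replacement for the Pl\"unnecke--Ruzsa and Croot--Hart inputs underlying \cref{thm:4cases}(iii)--(iv). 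Identifying such an ingredient, rather than further refining the $*$-calculus, is the step I would expect to be genuinely hard, and it is the reason this conjecture remains open despite \cref{thm:soly}.
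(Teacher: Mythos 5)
You have correctly recognized that \cref{conj:solymosi} is stated as a \emph{conjecture} in the paper and is not proved there. The authors are explicit (abstract and introduction) that they establish only a variant, namely \cref{thm:soly}, which replaces the constant bound $C(c)$ by $n^\eps$ for every $\eps>0$; the role of \cref{conj:solymosi} in Section~9 is merely as a hypothetical that would imply \cref{conj:unif-freiman-for-lines-cprod}, and the paper offers no argument toward the conjecture itself. So there is no proof in the paper to compare your attempt against, and your framing of the problem as ``close the gap from $n^{o(1)}$ to $O_c(1)$'' is exactly right.

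Your diagnosis of why the paper's machinery cannot close that gap is also sound and worth taking seriously. The estimates in \cref{thm:4cases}(i)--(ii) are relative, of the form $\size{P}\le 2\size{L*L}n^{2\delta}/\size{L}$, and the extraction step in \cref{cor:almostgp} costs a multiplicative $n^{-O(\alpha)}$; both are harmless when $\size{L}\ge n^\eps$ for some fixed $\eps>0$, but they become content-free the moment $\size{L}$ is subpolynomial in $n$, which is precisely the residual regime \cref{conj:solymosi} requires one to control. The Szemer\'edi--Trotter endgame used to bound the number of iterations likewise only bites at polynomial scales. In short, nothing in Sections~4--8 distinguishes a family of bounded size from one of size, say, $\exp(\sqrt{\log n})$. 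Your conclusion that a genuinely new ingredient -- an incidence input sensitive to constant-order richness in general position, or a loss-free replacement for the Pl\"unnecke--Ruzsa/Croot--Hart steps behind \cref{thm:4cases}(iii)--(iv) -- is needed, and that the statement remains open, is an accurate and honest assessment rather than a gap in your reasoning. The only caveat is that what you have written is an analysis of obstructions, not a proof; but since the paper itself proves no such thing, that is the correct deliverable here.
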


\begin{proof}[Proof that \cref{conj:solymosi} implies \cref{conj:unif-freiman-for-lines-cprod}.] Given a set $L$ of $c n$ lines which are $c n$-rich in an $n \times n$ grid, let $L'$ be a maximum collection of these lines in general position. By \cref{conj:solymosi}, $\size{L'} = C$ for some constant $C$. Define $L(\lambda)$ to be the set of lines in $L$ with slope $\lambda$ and define $L(p)$ to be the set of lines in $L$ passing through a given point $p \in \R^2$. Then the union of $L(\lambda)$ over all $\lambda$ which are slopes of lines in $L'$ and $L(p)$ over all points $p$ which are intersections of pairs of lines in $L'$ must be $L$. So $L$ is the union of $C + C^2$ parallel and star families.
\end{proof}

The main result of the present paper, \cref{thm:soly}, yields an analogous result to \cref{conj:unif-freiman-for-lines-cprod}.

\begin{corollary}
	\label{cor:unif-freiman-for-lines-cprod-delta-eps}
	For every $\eps > 0$, there exists $0 < \delta_0 < \eps$ such that for all $0 < \delta < \delta_0$, the following holds for sufficiently large $n$:
	
	A set of lines that are each $n^{1-\delta}$-rich in an $n \times n$ cartesian product must be the union of $n^\eps$ parallel and star families.
\end{corollary}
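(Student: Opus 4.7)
The plan is to mimic the short argument given earlier showing that \cref{conj:solymosi} implies \cref{conj:unif-freiman-for-lines-cprod}, but with the quantitative \cref{thm:soly} standing in place of the qualitative Solymosi conjecture. Given $\eps > 0$, I would apply \cref{thm:soly} with parameter $\eps/3$ to obtain a corresponding $0 < \delta_0 < \eps/3$, and take this as the $\delta_0$ in the statement. Fix $\delta < \delta_0$ and let $L$ be any set of $n^{1-\delta}$-rich lines in the $n \times n$ cartesian product $A \times A$; if $\size{L} < n^\eps$ there is nothing to do, so assume otherwise.

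Greedily extract a maximal subset $L' \subseteq L$ in general position (no two lines parallel, no three concurrent). By \cref{thm:soly} applied with exponent $\eps/3$, we must have $\size{L'} < n^{\eps/3}$, for otherwise $L'$ itself would witness the forbidden pair of parallel lines or triple of concurrent lines. The maximality of $L'$ then forces every line $\ell \in L \setminus L'$ to destroy general position upon being added to $L'$, and since $L'$ itself is in general position, the offending configuration must involve $\ell$. Hence either $\ell$ is parallel to some $\ell' \in L'$, or $\ell$ passes through the intersection point of two lines in $L'$.

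Set $\Lambda := \set{\lambda}{\lambda \text{ is the slope of some } \ell' \in L'}$ and $P := \set{\ell_1 \cap \ell_2}{\ell_1, \ell_2 \in L',\ \ell_1 \neq \ell_2}$, so that $\size{\Lambda} \leq \size{L'} < n^{\eps/3}$ and $\size{P} \leq \binom{\size{L'}}{2} < n^{2\eps/3}$. For each $\lambda \in \Lambda$ let $F_\lambda \subseteq L$ collect all lines of $L$ with slope $\lambda$, and for each $p \in P$ let $G_p \subseteq L$ collect all lines of $L$ passing through $p$. By the dichotomy of the previous paragraph, together with the trivial fact that every $\ell' \in L'$ lies in $F_{\text{slope}(\ell')}$, we have
\[ L = \Bigl(\bigcup_{\lambda \in \Lambda} F_\lambda\Bigr) \cup \Bigl(\bigcup_{p \in P} G_p\Bigr), \]
which expresses $L$ as a union of at most $\size{\Lambda} + \size{P} < n^{\eps/3} + n^{2\eps/3} \leq n^\eps$ parallel and star families, once $n$ is large enough that $n^{\eps/3} + n^{2\eps/3} \leq n^\eps$.

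There is no substantive obstacle: once \cref{thm:soly} is in hand the argument is purely combinatorial, and the only bookkeeping is to budget the exponent loss ($\eps/3$ for $\size{L'}$, $2\eps/3$ for the pairwise intersections, absorbed into the target $n^\eps$).
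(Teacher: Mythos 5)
Your proof is correct and follows essentially the same route as the paper's: apply \cref{thm:soly} to bound a maximal general-position subset $L' \subseteq L$, then cover $L$ by the parallel families indexed by slopes of lines in $L'$ together with the star families through pairwise intersection points of $L'$. The only difference is your bookkeeping: applying \cref{thm:soly} at exponent $\eps/3$ gives $n^{\eps/3} + n^{2\eps/3} \leq n^\eps$ families, which is in fact tidier than the paper's own proof (which applies it at exponent $\eps$ and arrives at $n^\eps + n^{2\eps}$ families, slightly overshooting the stated bound).
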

\begin{proof} Given $\eps > 0$, choose $\delta > 0$ to satisfy \cref{thm:soly}. Then given a set $L$ of lines which are $n^{1-\delta}$-rich in an $n \times n$ grid, let $L'$ be a maximum collection of these lines in general position. So $\size{L'} \leq n^\eps$ for some constant $\eps > 0$. Define $L(\lambda)$ to be the set of lines in $L$ with slope $\lambda$ and define $L(p)$ to be the set of lines in $L$ passing through a given point $p \in \R^2$. Then the union of $L(\lambda)$ over all $\lambda$ which are slopes of lines in $L'$ and $L(p)$ over all points $p$ which are intersections of pairs of lines in $L'$ must be $L$. So $L$ is the union of $n^\eps + n^{2\eps}$ parallel and star families.
\end{proof}

Therefore, we also have the following uniform statistical Freiman-type theorem similar to \cref{conj:unif-freiman-for-lines-image}:
\begin{corollary}
	\label{cor:unif-freiman-for-lines-image-delta-eps}
	For every $\eps > 0$, there exists $0 < \delta_0 < \eps$ such that for all $0 < \delta < \delta_0$, the following holds for sufficiently large $n$:

	Let $\Phi,H$ be as in \cref{thm:lin-image} and $G(\Phi,H,E)$ be a bipartite graph with degree at least $n^{1-\delta}$ for each $\phi \in \Phi$. If $\size{\Phi_E(H)} \leq n^{1+\delta}$, then $\Phi$ is the union of $n^\eps$ parallel and star families.
\end{corollary}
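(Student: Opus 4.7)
The plan is to reduce \cref{cor:unif-freiman-for-lines-image-delta-eps} directly to \cref{cor:unif-freiman-for-lines-cprod-delta-eps}, mirroring the equivalence proof given earlier in this section for the conjecture-strength versions. The idea is to package the image-set data $(\Phi,H,E)$ into a single Cartesian product by enlarging the point set to contain the images.

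Fix $\eps > 0$. I would first apply \cref{cor:unif-freiman-for-lines-cprod-delta-eps} with parameter $\eps/2$ to obtain a threshold $\delta_1 > 0$, and set $\delta_0 := \min(\eps/2,\delta_1/3)$. Given any $0 < \delta < \delta_0$ and data $(\Phi,H,E)$ satisfying the hypotheses with $\size{H}=n$, set $A := H \cup \Phi_E(H)$ and $N := \size{A}$, so that $N \leq n + n^{1+\delta} \leq 2n^{1+\delta}$. For each $\phi \in \Phi$, the degree hypothesis supplies at least $n^{1-\delta}$ pairs $(h,\phi(h))$ with $h \in H$ and $\phi(h) \in \Phi_E(H)$; these are distinct points of $A \times A$ lying on the graph of $\phi$, so $\phi$ is $n^{1-\delta}$-rich in the grid $A \times A$.

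To match the form required by \cref{cor:unif-freiman-for-lines-cprod-delta-eps} (which measures richness relative to the side length of the grid), I would observe that for $\delta < 1/3$ and $n$ sufficiently large,
\[
N^{1-3\delta} \leq (2n^{1+\delta})^{1-3\delta} = 2^{1-3\delta}\, n^{1-2\delta-3\delta^2} \leq n^{1-\delta},
\]
so each $\phi \in \Phi$ is in fact $N^{1-3\delta}$-rich in the $N \times N$ grid $A \times A$. Since $3\delta < \delta_1$, \cref{cor:unif-freiman-for-lines-cprod-delta-eps} applies with grid parameter $N$ in place of $n$, yielding a covering of $\Phi$ by at most $N^{\eps/2}$ parallel and star families. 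Finally, $N^{\eps/2} \leq (2n^{1+\delta})^{\eps/2} \leq n^{\eps}$ for $n$ sufficiently large, which is the desired bound.

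The main obstacle is essentially bookkeeping rather than new ideas: one must absorb the growth of the ambient scale from $n$ to $N \leq 2n^{1+\delta}$, which costs a constant factor both in the $\delta$-threshold (hence taking $\delta_0 \leq \delta_1/3$) and in the exponent of the conclusion (hence invoking the cartesian-product corollary with $\eps/2$ rather than $\eps$). Away from this, the proof is a direct translation of the image-set hypothesis into the Cartesian-product language that \cref{cor:unif-freiman-for-lines-cprod-delta-eps} already handles.
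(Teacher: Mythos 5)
Your proof is correct and follows essentially the same route as the paper: embed the data into the Cartesian product $A \times A$ with $A := H \cup \Phi_E(H)$, note that each $\phi \in \Phi$ remains rich in that grid, re-express the richness exponent in terms of $\size{A}$ (the paper writes $\delta' = 1 - \tfrac{1-\delta}{1+\delta}$ where you use the cruder but serviceable bound $N^{1-3\delta} \leq n^{1-\delta}$), and invoke \cref{cor:unif-freiman-for-lines-cprod-delta-eps} with a slightly smaller target exponent to absorb the scale change. The only difference is that you make the bookkeeping fully explicit where the paper simply observes $\delta' \to 0$ as $\delta \to 0$.
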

\begin{proof}
	Let $A = H \cup \Phi_E(H)$. Each line in $\Phi$ is incident with at least $n^{1-\delta}$ edges of $E$, so each of the $n$ lines of $\Phi$ is $n^{1-\delta}$-rich in the $n^{1+\delta} \times n^{1+\delta}$ cartesian product $A \times A$. In other words, each line of $\Phi$ is $\size{A}^{1-\delta'}$-rich in $A \times A$, where $\delta' = 1 - \frac{1-\delta}{1+\delta}$. Since $\delta' \to 0$ as $\delta \to 0$, by choosing $\delta$ small enough, we can ensure through \cref{cor:unif-freiman-for-lines-cprod-delta-eps} that $\Phi$ is the union of $n^\eps$ parallel and star families.
\end{proof}

\end{document}